\providecommand{\tabularnewline}{\\}
\numberwithin{equation}{section}
\numberwithin{figure}{section}
\theoremstyle{plain}
\newtheorem{thm}{\protect\theoremname}
\theoremstyle{plain}
\newtheorem{rem}[thm]{\protect\remarkname}
\theoremstyle{plain}
\newtheorem{lem}[thm]{\protect\lemmaname}
\theoremstyle{definition}
\newtheorem{defn}[thm]{\protect\definitionname}
\theoremstyle{plain}
\newtheorem{cor}[thm]{\protect\corollaryname}
\theoremstyle{plain}
\newtheorem{prop}[thm]{\protect\propositionname}
\theoremstyle{definition}
\newtheorem{problem}[thm]{\protect\problemname}
\theoremstyle{definition}
\newtheorem{example}[thm]{\protect\examplename}
\theoremstyle{remark}
\newtheorem*{claim*}{\protect\claimname}
\providecommand{\claimname}{Claim}
\providecommand{\corollaryname}{Corollary}
\providecommand{\definitionname}{Definition}
\providecommand{\examplename}{Example}
\providecommand{\lemmaname}{Lemma}
\providecommand{\problemname}{Problem}
\providecommand{\propositionname}{Proposition}
\providecommand{\remarkname}{Remark}
\providecommand{\theoremname}{Theorem}
\begin{document}
\title{Independent sets, cliques, and colorings in graphons}
\author{Jan Hladký}
\address{Institute of Mathematics of the Czech Academy of Sciences, Žitná 25,
Praha, Czechia. The Institute of Mathematics is supported by RVO:67985840.
\emph{This work was done while affiliated with} Institut für Geometrie,
TU Dresden, 01062 Dresden, Germany.}
\thanks{\emph{Hladký} was supported by the Alexander von Humboldt Foundation}
\email{honzahladky@gmail.com}
\author{Israel Rocha}
\address{The Czech Academy of Sciences, Institute of Computer Science, Pod
Vodárenskou v\v{e}ží 2, 182~07 Prague, Czech Republic. With institutional
support RVO:67985807.}
\thanks{\emph{Rocha} was supported by the Czech Science Foundation, grant
number GJ16-07822Y}
\email{israelrocha@gmail.com}
\begin{abstract}
We study graphon counterparts of the chromatic and the clique number,
the fractional chromatic number, the $b$-chromatic number, and the
fractional clique number. We establish some basic properties of the
independence set polytope in the graphon setting, and duality properties
between the fractional chromatic number and the fractional clique
number. We present a notion of perfect graphons and characterize them
in terms of induced densities of odd cycles and its complements.
\end{abstract}

\maketitle

\section{Introduction}

The concepts of independent sets, cliques, and colorings are among
the most studied in graph theory. Before stating its graphon counterparts,
let us recall some fundamental concepts.

\subsection{Review of basic properties for graphs}

Suppose that $G=(V,E)$ is a graph. We say that a  function $f:V\rightarrow[k]$
is a \emph{proper coloring} of $G$ with $k$ colors if for all pairs
$xy\in{V \choose 2}$ we have $xy\notin E$ or $f(x)\neq f(y)$. The
\emph{chromatic number} $\chi(G)$ is defined as the minimal number
of colors for which a proper coloring exists. Thus, in a proper coloring
we color the vertices avoiding that neighbors share the same color.

One can generalize the concept of proper coloring by adding multiple
colors to each vertex of a graph in the following manner. Let $b\in\mathbb{N}$.
A map $p:V\rightarrow{[k] \choose b}$ is a \emph{$b$-fold coloring}
of $G$ with $k$ colors if for all pairs $xy\in{V \choose 2}$ we
have $xy\notin E$ or $p(x)\cap p(y)=\emptyset$. The \emph{$b$-fold
chromatic number} $\chi_{b}(G)$ is the smallest number of colors
necessary to construct a proper $b$-fold coloring of $G$.

A set $A\subseteq V$ is an \emph{independent set} of $G$ if for
all pairs $xy\in{A \choose 2}$ we have $xy\notin E$. The size of
the largest independent set of $G$ is the \textit{independence number}
$\alpha(G)$. Dual to the concept of an independent set is that of
a \textit{clique} of a graph $G$, a set $A\subseteq V$ where for
all pairs $xy$ inside $A$ we have $xy\in E$. The \emph{clique number}
$\omega(G)$ is the size of the largest clique of $G$. 

Now denote by $\mathcal{I}(G)$ the collection of independent sets
in $G$. A function $c:\mathcal{I}(G)\rightarrow[0,1]$ is a \emph{fractional
coloring} of $G$ if for every $v\in V(G)$ we have 
\begin{equation}
\sum_{I\in\mathcal{I}(G),v\in I}c(I)\ge1\;.\label{eq:fraccolorfinite}
\end{equation}
The \textit{fractional chromatic number} is then defined as the infimum
of $\sum_{I\in\mathcal{I}(G)}c(I)$ taken over all fractional colorings
$c$. Fractional colorings are indeed a fractional relaxation of ordinary
colorings. Indeed, when $c:\mathcal{I}(G)\rightarrow\{0,1\}$, then~(\ref{eq:fraccolorfinite})
can be interpreted as the condition that in a coloring every vertex
has to be covered by at least one independent set.

We say that a function $f:V\rightarrow[0,+\infty)$ is a \emph{fractional
clique} if for every $I\in\mathcal{I}(G)$ we have
\begin{equation}
\sum_{v\in I}f(v)\leq1.\label{eq:fraccliqueFinte}
\end{equation}
The \emph{fractional clique number} of $G$ is defined as 
\[
\omega_{\mathrm{frac}}(G)=\max\sum_{v\in V}f(v),
\]
where the maximum is taken over all fractional cliques of $G$. In
the same fashion as in fractional colorings, the fractional clique
number is the relaxation of the problem of finding a maximum clique
in a graph. Notice that when $f:V\rightarrow\{0,1\}$, then~(\ref{eq:fraccliqueFinte})
can be interpreted as the condition that every independent set has
at most one vertex of any clique.

\medskip{}

The study of these parameter, and in particular the study of the interplay
of the integral and fractional versions of these parameters is central
in graph theory.

\subsection{Our contribution\label{subsec:OurContribution}}

We translate the basics of the theory regarding independent sets,
cliques, and colorings to the setting of graphons (see Section~\ref{sec:Notation}
for basics). While turning a graph definition into a graphon one is
typically straightforward (and in many cases had been done previously),
counterparts to many natural facts from finite graphs turned out to
be quite challenging. By optimizing over all objects of this type
(independent sets, fractional cliques, \ldots ), we can define the
corresponding numerical graphon parameter (independence number, fractional
clique number, \ldots ). We study relations between these parameters.
We also study continuity of these graphon parameters with respect
to the cut-norm. It turns out that none of the quantities we introduce
is continuous, as is shown in all cases by the sequence of constant
graphons $\left(Y_{n}\equiv\frac{1}{n}\right)_{n}$ converging to
$Y\equiv0$. However, most of these parameters are semicontinuous
with respect to the cut-norm. These results are summarized in Table~\ref{tab:summaryofresults}.

\begin{table}[h]
\begin{tabular}{|c|ccc|}
\hline 
 & (A) representation & (B) semicontinuity & (C) supremum over graphs\tabularnewline
\hline 
independence & \emph{trivial} & Corollary~\ref{cor:indepsemicontinuous} & see Section~\ref{subsec:CooleyKangPikhurko}\tabularnewline
chromatic & Proposition~\ref{prop:chifiniteVSgraphon} & Theorem~\ref{thm:chromaticLimit}(a) & Proposition~\ref{prop:k-partite}\tabularnewline
fractional chromatic & Proposition~\ref{prop:chiFRACfiniteVSgraphon} & \emph{not true} (Example~\ref{ex:leader}) & \emph{not true} (Example~\ref{ex:leader})\tabularnewline
clique & \emph{trivial} & Proposition~\ref{prop:cliquesemicont} & \emph{trivial}\tabularnewline
fractional clique & Proposition~\ref{prop:f-cliqueRepresentation} & Theorem~\ref{thm:liminfFracClique} & Corollary~\ref{cor:fraccliqSUPfinite}\tabularnewline
\hline 
\end{tabular}

\caption{\label{tab:summaryofresults}Summary of results regarding graphon
parameters we study. Column~(A) refers to results that show equality
of the graphon parameter to the corresponding graph parameter in case
of graphon representation of a finite graph. Column~(B) refers to
results that show that these graphon parameters are lower semicontinuous
(for chromatic number, fractional chromatic number, clique number,
and fractional clique number) or upper semicontinuous (for independence
number) in the cut-distance. In all cases the sequence $Y_{n}\equiv\frac{1}{n}\rightarrow Y\equiv0$
shows that we do not have the complementary semicontinuity. Column~(C)
refers to results that show that for a general graphon, the value
can be computed as the supremum over all finite graphs that appear
in that graphon of the graph version of that parameter.}
\end{table}

Further, in Section~\ref{subsec:StructIndep} we introduce a graphon
counterpart to the independence set polytope, in Section~\ref{sec:DualityCliqueColoring}
we treat the LP duality between fractional cliques and fractional
colorings, and in Section~\ref{sec:PerfectGraphons} we introduce
two notions of perfect graphons. Several fairly basic problems remain
open.
\begin{rem}
All our notions only depend on the support of a graphon. That is,
replacing a graphon for example by the indicator function of its support,
the notions of independent sets, colorings, etc., the corresponding
numerical parameters do no change. The only exception to this is a
notion of perfect graphons introduced in Section~\ref{sec:PerfectGraphons}.
\end{rem}

The paper is organized as follows: in Section \ref{sec:Notation}
we provide some notation and prove a preliminary lemma; In Section
\ref{sec:Independent-sets} we study independent sets; Section \ref{sec:Coloring-concepts}
is dedicated to develop different concepts of chromatic number; In
Section \ref{sec:Cliques} we introduce notions of clique numbers;
In Section \ref{sec:Duality}, we prove certain duality properties
between chromatic and clique parameters; and finally, Section \ref{sec:PerfectGraphons}
is devoted to the study of perfect graphons.

\section{Notation and preliminaries\label{sec:Notation}}

Throughout, we fix an atomless Borel probability space $\Omega$ equipped
with a measure $\nu$ (defined on an implicit sigma-algebra). For
$k\in\mathbb{N}$, we denote by $\nu^{\otimes k}$ the product measure
on $\Omega^{k}$.

Graphons, introduced in~\cite{Lovasz2006,Borgs2008c}, are analytic
objects that capture limit properties of dense graphs. We assume the
reader's familiarity with the basics of the theory. Our notation mostly
follows Lovász' treatise~\cite{Lovasz2012}. Our graphons will be
mostly defined on $\Omega^{2}$ In particular, we shall work with
the \emph{density }and the \emph{induced density }of a finite graph
$H$ in a graphon $W$, defined by
\begin{align*}
t(H,W) & =\int_{\left\{ x_{v}\right\} _{v\in V(H)}}\prod_{uv\in E(H)}W\left(x_{u},x_{v}\right)\quad\text{and}\\
t_{\mathrm{ind}}(H,W) & =\int_{\left\{ x_{v}\right\} _{v\in V(H)}}\prod_{uv\in E(H)}W\left(x_{u},x_{v}\right)\cdot\prod_{uv\not\in E(H)}\left(1-W\left(x_{u},x_{v}\right)\right)\;.
\end{align*}
Also, we shall make use of \emph{inhomogeneous random graphs} $\mathbb{G}(n,W)$
described for example in Section~10.1 of~\cite{Lovasz2012}. By
a \emph{subgraphon of $W$ obtained by restricting to a set $A\subset\Omega$}
of positive measure we mean a function $W[A]:A^{2}\rightarrow[0,1]$
which is simply the restriction $W\restriction_{A\times A}$. When
working with this notion, we need to turn $A$ into a probability
space. That is, we view $W[A]$ as a graphon on the probability space
$A$ endowed with measure $\nu_{A}(B):=\frac{\nu(B)}{\nu(A)}$ for
every measurable set $B\subset A$.

All subsets of $\Omega$ or of $\Omega^{2}$ considered will be measurable;
whenever a new set is constructed it follows immediately from the
construction that the set is measurable. For sets $A,B\subset\Omega$,
$C,D\subset\Omega^{2}$ we write $A=B\mod0$, $C=D\mod0$ for equality
up to null sets, i.e., if $\nu(A\triangle B)=0$ and $\nu^{\otimes2}(A\triangle B)=0$.
Given a function $f:X\rightarrow\mathbb{R}$, we denote by $\mathrm{supp}\left(f\right)$
the support of $f$, $\mathrm{supp}\left(f\right):=\{x\in X:f(x)\neq0\}$.

A graphon $W:\Omega\times\Omega\rightarrow[0,1]$ is a \emph{graphon
representation} of a finite graph $G$ if there exists a partition
$\Omega=\bigsqcup_{v\in V(G)}\Omega_{v}$ of sets of measure $\frac{1}{|V(G)|}$
such that $W$ restricted to $\Omega_{u}\times\Omega_{v}$ is either
constant~$0$ or constant~$1$ (modulo a nullset), depending on
whether $uv\notin E(G)$ or $uv\in E(G)$.

\subsection{Structural properties of graphons with a given subgraph}

In this section we state Lemma~\ref{lem:rectangles} which says that
if $t(H,W)>0$ for some finite graph $H$ and some graphon $W$ then
we can find a subgraphon similar to the adjacency matrix of $H$ in
$W$. The case $H=C_{2\ell+1}$ of Lemma~\ref{lem:rectangles} appears
in~\cite{DoHl:Polytons}. Our proof of Lemma~\ref{lem:rectangles}
closely follows~\cite[Lemma 11]{DoHl:Polytons}.
\begin{lem}
\textcolor{red}{\label{lem:rectangles}}Suppose that $W:\Omega\times\Omega\rightarrow[0,1]$
is a graphon. Suppose that $H$ is a graph on vertex set $[k]$ with
the property that $t(H,W)>0$. For each $\epsilon>0$ there exist
$\alpha>0$ and pairwise disjoint sets $A_{1},\ldots,A_{k}\subset\Omega$,
each of measure $\alpha$, such that for each $ij\in E(H)$, the graphon
$W$ is positive everywhere on $A_{i}\times A_{j}$ except a set of
measure at most $\epsilon\alpha^{2}$.
\end{lem}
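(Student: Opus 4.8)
The plan is to extract the desired sets by a random sampling argument from the definition of $t(H,W)>0$, following the template of \cite[Lemma 11]{DoHl:Polytons}. Write $V(H)=[k]$. Since $t(H,W)=\int_{\Omega^{k}}\prod_{ij\in E(H)}W(x_{i},x_{j})\,\mathrm{d}\nu^{\otimes k}>0$, there is a "core" of tuples contributing positively. First I would fix a small threshold $\delta>0$ and pass to the event that every factor is bounded below: by monotone/dominated convergence, the set $G_{\delta}:=\{(x_{1},\ldots,x_{k})\in\Omega^{k}:W(x_{i},x_{j})\ge\delta\text{ for all }ij\in E(H)\}$ has positive measure once $\delta$ is small enough, since $\prod_{ij\in E(H)}W(x_i,x_j)\le \mathbf{1}_{G_\delta}+\delta$ pointwise (up to the obvious combinatorial constant) and $t(H,W)>0$. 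Denote $p:=\nu^{\otimes k}(G_{\delta})>0$.

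Next I would use a Lebesgue-density / martingale argument to find many small product boxes almost entirely inside $G_{\delta}$. Concretely, take a sequence of finite partitions $\mathcal{P}_{1}\prec\mathcal{P}_{2}\prec\cdots$ of $\Omega$ generating the sigma-algebra (so that the induced product partitions $\mathcal{P}_{m}^{\otimes k}$ of $\Omega^{k}$ generate the product sigma-algebra); then $\mathbb{E}[\mathbf{1}_{G_{\delta}}\mid\mathcal{P}_{m}^{\otimes k}]\to\mathbf{1}_{G_{\delta}}$ a.e.\ and in $L^{1}$. Hence for $m$ large there is a product cell $Q_{1}\times\cdots\times Q_{k}$ (with each $Q_{i}\in\mathcal{P}_{m}$) in which the relative measure of $G_{\delta}$ exceeds $1-\eta$, where $\eta=\eta(\epsilon,k)$ is chosen below; by further partitioning we may also assume $\nu(Q_{1}),\ldots,\nu(Q_{k})$ are all within a factor $2$ of each other, and then by discarding parts of the larger cells we may take $\nu(Q_{1})=\cdots=\nu(Q_{k})=:\beta$ while keeping the relative measure of $G_{\delta}$ in $Q_{1}\times\cdots\times Q_{k}$ at least $1-2^{k}\eta$. (This last trimming step is the one to be careful with, since removing mass from one coordinate can in principle inflate the relative size of the bad set in the box; choosing $\eta$ small compared to $2^{-k}\epsilon$ absorbs this.)

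There is one remaining subtlety: the $Q_i$ are subsets of $\Omega$ but need not be pairwise disjoint (indeed a graph $H$ with a loop-free but dense structure forces the same index-set issues as in \cite{DoHl:Polytons}). Since $H$ is loopless, whenever $Q_i$ and $Q_j$ overlap for $i\neq j$ we may split each of them into two halves of equal measure and assign disjoint halves to the different indices; doing this for all $\binom{k}{2}$ pairs at once (a standard argument: refine $\Omega$ into $2^{\binom{k}{2}}$ equal-measure slices within each $Q_i$ and pick a slice for index $i$ avoiding the slices chosen for the others) produces pairwise disjoint $A_{1},\ldots,A_{k}$ with common measure $\alpha:=\beta\cdot 2^{-\binom{k}{2}}$, each $A_i\subseteq Q_i$. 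Removing mass multiplies the relative measure of the bad set by at most $2^{\binom{k}{2}}$ in each coordinate, so the relative measure of $G_{\delta}$ inside $A_{1}\times\cdots\times A_{k}$ is still at least $1-C_{k}\eta$ for a constant $C_k$ depending only on $k$.

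Finally I would read off the conclusion. Fix $ij\in E(H)$ and consider the "bad rectangle" $B_{ij}:=\{(x,y)\in A_{i}\times A_{j}:W(x,y)=0\}$ (or more safely $W(x,y)<\delta$, which is even stronger than what is claimed). If $(x_1,\dots,x_k)\in G_\delta$ then in particular $(x_i,x_j)\notin B_{ij}$; hence $B_{ij}\times\prod_{\ell\neq i,j}A_{\ell}$ is disjoint from $G_\delta$, so $\nu^{\otimes2}(B_{ij})\cdot\alpha^{k-2}\le\nu^{\otimes k}\big((A_1\times\cdots\times A_k)\setminus G_\delta\big)\le C_{k}\eta\,\alpha^{k}$, giving $\nu^{\otimes2}(B_{ij})\le C_{k}\eta\,\alpha^{2}$. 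Choosing $\eta:=\epsilon/C_{k}$ at the outset makes this $\le\epsilon\alpha^{2}$, as required, and $W$ is positive (indeed $\ge\delta$) on $A_{i}\times A_{j}$ off this set. The main obstacle is bookkeeping the constants through the two trimming steps (equalizing the cell measures and disjointifying the $A_i$) so that the loss in the relative density of $G_{\delta}$ stays bounded by a function of $k$ alone; none of it is deep, but it must be organized so that $\eta$ can be fixed before $m$ and the partition are chosen.
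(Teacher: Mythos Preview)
Your approach is correct and is close in spirit to the paper's, but the two are organized differently in a way worth recording. The paper secures pairwise disjointness of the $A_i$ at the very beginning rather than at the end: it partitions $\Omega$ into $n>k^{2}/t(H,W)$ cells of equal measure $1/n$ and observes that the contribution to $t(H,W)$ from $k$-tuples with two coordinates in the same cell is strictly less than $t(H,W)$, so some product of \emph{distinct} cells $\Omega_{\ell_1}\times\cdots\times\Omega_{\ell_k}$ already carries positive mass of the positivity set $E=\{\mathbf{x}:\prod_{ij\in E(H)}W(x_i,x_j)>0\}$. This makes your end-stage disjointification step (and the attendant $2^{k\binom{k}{2}}$-type losses) unnecessary; it also clarifies your ``overlap'' discussion, since cells of a partition are either equal or disjoint, and the diagonal-removal trick kills the equal case outright. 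Second, in place of your martingale/Lebesgue-density argument, the paper approximates $E$ by a finite disjoint union of boxes (using that boxes generate the product $\sigma$-algebra), averages to locate one box $R_{i_0}=B_1\times\cdots\times B_k$ with relative $E$-density at least $1-\epsilon/2$, and then refines each $B_i$ into pieces of a common measure $1/p$ and averages again to obtain an equal-sided sub-box with relative $E$-density at least $1-\epsilon$. This is the same idea as your density-point argument; note in particular that if you simply choose your $\mathcal{P}_m$ to consist of cells of equal measure (possible since $\Omega$ is atomless), your equalization step --- which you correctly flag as the delicate bookkeeping --- disappears entirely. Your use of the thresholded set $G_\delta$ rather than the positivity set $E$ is harmless and in fact yields a marginally stronger conclusion.
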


\begin{proof}
Let $n\in\mathbb{N}$ be such that 
\begin{equation}
n>\frac{k^{2}}{t(H,W)}\;.\label{eq:findn}
\end{equation}
Take an arbitrary partition $\Omega=\bigsqcup_{i=1}^{n}\Omega_{i}$
of $\Omega$ into pairwise disjoint sets of measure $\tfrac{1}{n}$.
Set 
\begin{equation}
D:=\left\{ \mathbf{x}\in\Omega^{k}\::\:\text{there are }i,j\in[k]\text{ and }\ell\in[n]\text{ such that }i\neq j\text{ and }x_{i},x_{j}\in\Omega_{\ell}\right\} \;.
\end{equation}
Then we have 
\begin{equation}
\int_{\mathbf{x}\in D}\prod_{1\le i<j\le k,ij\in E(H)}W(x_{i},x_{j})\le\nu^{\otimes k}\left(D\right)\le\sum\limits _{\substack{i,j=1,\ldots,k\\
i\neq j
}
}\frac{1}{n}\le\frac{k^{2}}{n}\stackrel{\eqref{eq:findn}}{<}t(H,W)\;,\label{eq:IntegralOverDiagonal}
\end{equation}
From~(\ref{eq:IntegralOverDiagonal}), we get 
\[
\int_{\mathbf{x}\in\Omega^{k}\setminus D}\prod_{1\le i<j\le k,ij\in E(H)}W(x_{i},x_{j})>0\;.
\]
Using the definition of $D$, we get that there are pairwise distinct
integers $\ell_{1},\ldots,\ell_{k}\in[n]$ such that 
\[
\int_{\Omega_{\ell_{1}}}\int_{\Omega_{\ell_{2}}}\cdots\int\limits _{\Omega_{\ell_{k}}}\prod_{1\le i<j\le k,ij\in E(H)}W(x_{i},x_{j})>0\;.
\]
We conclude that the set 
\begin{equation}
E:=\left\{ \mathbf{x}\in\Omega_{\ell_{1}}\times\Omega_{\ell_{2}}\times\ldots\times\Omega_{\ell_{k}}\colon\prod_{1\le i<j\le k,ij\in E(H)}W(x_{i},x_{j})>0\right\} \label{eq:indi}
\end{equation}
has positive measure.

Given $\epsilon>0$, let $\delta>0$ be such that 
\begin{equation}
\frac{\nu^{\otimes k}(E)-\delta}{\nu^{\otimes k}(E)+\delta}\ge1-\frac{\epsilon}{2}\;.\label{eq:coted}
\end{equation}
Recall that the $\sigma$-algebra of all measurable subsets of $\Omega_{\ell_{1}}\times\ldots\times\Omega_{\ell_{k}}$
is generated by the algebra consisting of all finite unions of boxes.
Thus there is a finite union $S=\bigcup\limits _{i=1}^{m}R_{i}$ of
boxes $R_{1},\ldots,R_{m}$ in $\Omega_{\ell_{1}}\times\ldots\times\Omega_{\ell_{k}}$
such that $\nu^{\otimes k}(E\setminus S)+\nu^{\otimes k}(S\setminus E)\le\delta$.
Without loss of generality, we may assume that the boxes $R_{1},\ldots,R_{m}$
are pairwise disjoint. Then we have 
\begin{equation}
\frac{\nu^{\otimes k}(E\cap S)}{\nu^{\otimes k}(S)}\ge\frac{\nu^{\otimes k}(E)-\nu^{\otimes k}(E\setminus S)}{\nu^{\otimes k}(E)+\nu^{\otimes k}(S\setminus E)}\ge\frac{\nu^{\otimes k}(E)-\delta}{\nu^{\otimes k}(E)+\delta}\stackrel{\eqref{eq:coted}}{\ge}1-\frac{\epsilon}{2}\;.\label{eq:nemamponeti}
\end{equation}
The left-hand side of~(\ref{eq:nemamponeti}) can be expressed as
\[
\frac{\nu^{\otimes k}(E\cap S)}{\nu^{\otimes k}(S)}=\sum\limits _{i=1}^{m}\frac{\nu^{\otimes k}(R_{i})}{\nu^{\otimes k}(S)}\cdot\frac{\nu^{\otimes k}(E\cap R_{i})}{\nu^{\otimes k}(R_{i})}\;,
\]
i.e.\ as a convex combination of $\frac{\nu^{\otimes k}(E\cap R_{i})}{\nu^{\otimes k}(R_{i})}$,
$i=1,\ldots,m$. Therefore, there is an index $i_{0}\in[m]$ such
that 
\begin{equation}
\frac{\nu^{\otimes k}(E\cap R_{i_{0}})}{\nu^{\otimes k}(R_{i_{0}})}\ge1-\frac{\epsilon}{2}\;.\label{eq:nef}
\end{equation}
Let $R_{i_{0}}$ be of the form $R_{i_{0}}=B_{1}\times\ldots\times B_{k}$.
Find a natural number $p$ such that 
\begin{equation}
p\ge\frac{2k}{\epsilon\nu^{\otimes k}(R_{i_{0}})}\;.\label{eq:train}
\end{equation}
For every $i=1,\ldots,k$, we consider a finite decomposition $B_{i}=B_{i}^{0}\cup\bigcup\limits _{j=1}^{q_{i}}B_{i}^{j}$
of $B_{i}$ into pairwise disjoint sets, such that $\nu(B_{i}^{0})\le\tfrac{1}{p}$
and $\nu(B_{i}^{j})=\tfrac{1}{p}$ for $j=1,\ldots,q_{i}$. Then we
clearly have 
\begin{equation}
\nu^{\otimes k}\left(R_{i_{0}}\setminus\prod\limits _{i=1}^{k}\bigcup\limits _{j=1}^{q_{i}}B_{i}^{j}\right)\le\frac{k}{p}\;,\label{eq:align}
\end{equation}
and so 
\begin{equation}
\frac{\nu^{\otimes k}\left(E\cap\prod\limits _{i=1}^{k}\bigcup\limits _{j=1}^{q_{i}}B_{i}^{j}\right)}{\nu^{\otimes k}\left(\prod\limits _{i=1}^{k}\bigcup\limits _{j=1}^{q_{i}}B_{i}^{j}\right)}\stackrel{\eqref{eq:align}}{\ge}\frac{\nu^{\otimes k}(E\cap R_{i_{0}})-\tfrac{k}{p}}{\nu^{\otimes k}(R_{i_{0}})}\stackrel{\eqref{eq:nef}}{\ge}1-\frac{\epsilon}{2}-\frac{k}{p\nu^{\otimes k}(R_{i_{0}})}\stackrel{\eqref{eq:train}}{\ge}1-\epsilon\;.\label{eq:afteralign}
\end{equation}
The left-hand side of (\ref{eq:afteralign}) can be expressed as the
following convex combination: 
\[
\frac{\nu^{\otimes k}\left(E\cap\prod\limits _{i=1}^{k}\bigcup\limits _{j=1}^{q_{i}}B_{i}^{j}\right)}{\nu^{\otimes k}\left(\prod\limits _{i=1}^{k}\bigcup\limits _{j=1}^{q_{i}}B_{i}^{j}\right)}=\sum\limits _{j_{1}=1}^{q_{1}}\cdots\sum\limits _{j_{k}=1}^{q_{k}}\frac{\nu^{\otimes k}\left(\prod\limits _{i=1}^{k}B_{i}^{j_{i}}\right)}{\nu^{\otimes k}\left(\prod\limits _{i=1}^{k}\bigcup\limits _{j=1}^{q_{i}}B_{i}^{j}\right)}\cdot\frac{\nu^{\otimes k}\left(E\cap\prod\limits _{i=1}^{k}B_{i}^{j_{i}}\right)}{\nu^{\otimes k}\left(\prod\limits _{i=1}^{k}B_{i}^{j_{i}}\right)}\;.
\]
Therefore by (\ref{eq:afteralign}), there are indices $j_{i}\in[q_{i}]$,
$i=1,\ldots,k$, such that
\begin{equation}
\frac{\nu^{\otimes k}\left(\prod\limits _{i=1}^{k}B_{i}^{j_{i}}\setminus E\right)}{\nu^{\otimes k}\left(\prod\limits _{i=1}^{k}B_{i}^{j_{i}}\right)}\le\epsilon\;.\label{eq:zmenajezivot}
\end{equation}
We set $A_{i}=B_{i}^{j_{i}}$ for $i=1,\ldots,k$. Then $A_{1},\ldots,A_{k}$
are pairwise disjoint (as $A_{i}\subseteq\Omega_{\ell_{i}}$ for every
$i$), and each of these sets has the same measure $\alpha=\tfrac{1}{p}$.

By~(\ref{eq:indi}) we have 
\begin{equation}
\left\{ (x_{1},x_{2},\ldots,x_{k})\in A_{1}\times A_{2}\times\ldots\times A_{k}\colon\prod_{1\le i<j\le k,ij\in E(H)}W(x_{i},x_{j})=0\right\} =\prod\limits _{i=1}^{k}B_{i}^{j_{i}}\setminus E\;.\label{eq:poui}
\end{equation}
By~(\ref{eq:zmenajezivot}), 
\[
\nu^{\otimes k}\left(\prod\limits _{i=1}^{k}B_{i}^{j_{i}}\setminus E\right)\le\epsilon\nu^{\otimes k}\left(\prod\limits _{i=1}^{k}B_{i}^{j_{i}}\right)=\frac{\epsilon}{p^{k}}\;.
\]
Consider an arbitrary edge $ij\in E(H)$. Observe that the set $\prod\limits _{i=1}^{k}B_{i}^{j_{i}}\setminus E$
contains all $k$-tuples $\mathbf{x}\in\prod_{h=1}^{k}A_{h}$ that
have the property that $W(x_{i},x_{j})=0$. Therefore, we get from~(\ref{eq:poui})
that 
\[
\nu^{\otimes2}\left(\left\{ (x_{i},x_{j})\in A_{i}\times A_{j}\colon W(x_{i},x_{j})=0\right\} \right)\le\epsilon\alpha^{2}\;,
\]
as required. 
\end{proof}

\section{\label{sec:Independent-sets}Independent sets}

While classically, the notion of independent sets and cliques are
in one-to-one correspondence by taking complements, our definitions
for graphons look at each of these concepts at a different scale.
The independence number of a graphon is a number in~$[0,1]$ that
should be interpreted as the fraction of vertices in a maximum independent
set of a graph that corresponds to that graphon. That is, we scale
down the independence number linearly. On the other hand, in Section~\ref{sec:Cliques}
we define the clique number (and its fractional variant) which are
all absolute with no additional rescaling introduced (see Section~\ref{sec:Cliques}
for a discussion of subtleties). If a graphon contains a copy of $K_{17}$
then its clique number will be at least~$17$. Of course, which of
the two parameters is scaled and which one is not is just a matter
of convention. Let us remark that in~\cite{DHM:CliquesRandom} a
different scale of $\log n$ is put on these parameters and studied
in the context of inhomogeneous random graphs $\mathbb{G}(n,W)$.

The following is then the obvious graphon counterpart to independent
sets.
\begin{defn}
Let $W:\Omega\times\Omega\rightarrow[0,1]$ be a graphon. A set $A\subseteq\Omega$
is an \emph{independent set} if $W$ is zero almost everywhere on
$A\times A$. Denote by $\mathcal{I}(W)$ the set of independent sets
of a graphon $W$ and for each $x\in\Omega$ denote by $\mathcal{I}_{x}(W)\subseteq\mathcal{I}(W)$
the set of independent sets of $W$ containing the point $x$. 
\end{defn}

The definition of $\mathcal{I}_{x}(W)$ may look suspicious as it
involves a measure-zero condition (a point belonging to a set). This
will not cause a problem since we shall always work with collections
$\mathcal{I}_{x}(W)$ for a set of $x$'s of positive measure. 

Let us mention that the paper \cite{Bachoc} is closely related to
ours, though concerned with different questions. In that paper, the
authors investigate the independence ratio (which we call independence
number in our setting) and chromatic number for bounded self-adjoint
operators on an $L^{2}$ space. In particular, the graphon operator
as defined in \cite[Section 7.5]{Lovasz2012} is an example of such
operators. In particular, our notion of independent set of a graphon
is equivalent to the notion of independent set as introduced in~\cite{Bachoc}
of the graphon operator associated to that graphon. In~\cite{Bachoc},
the authors provide bounds for these parameters in terms of the eigenvalues
of the operator \textemdash{} an analog to the Hoffman bound \textemdash{}
and use harmonic analysis and convex optimization to study packing
and coloring problems for finite and infinite graphs. 

Here, we need to describe particular properties of independent sets
in order to understand the behavior of the chromatic number and the
clique number for graphons.

The next lemma which appears in~\cite[Lemma 20]{HlHuPi:Komlos} asserts
that the weak{*} limit of independent sets is again an independent
set.
\begin{lem}
\label{lem:lemmaIndependentSets}Let $W$ be a graphon. Suppose $\left(A_{n}\right)_{n}$
is a sequence of sets in $\Omega$ with the property that 
\[
\lim_{n\rightarrow\infty}\int_{A_{n}\times A_{n}}W=0.
\]
Suppose that the indicator functions of the sets $A_{n}$ converge
weak{*} to a function $f$. Then $\mathrm{supp}\left(f\right)$ is
an independent set in $W$. 
\end{lem}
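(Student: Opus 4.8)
The plan is to work with the weak$^*$ convergence of indicator functions and the fact that $0 \le f \le 1$ almost everywhere (the weak$^*$ limit of $[0,1]$-valued functions is $[0,1]$-valued). The key observation is that weak$^*$ convergence of $\mathbf{1}_{A_n}$ to $f$ on $\Omega$ upgrades to weak$^*$ convergence of $\mathbf{1}_{A_n \times A_n} = \mathbf{1}_{A_n} \otimes \mathbf{1}_{A_n}$ to $f \otimes f$ on $\Omega^2$. This is because the tensor products $g \otimes h$ with $g,h \in L^\infty(\Omega)$ span a dense subset of the relevant test-function space (and the indicators are uniformly bounded, so a standard $3\epsilon$-argument extends weak$^*$ testing from this dense set to all of $L^1(\Omega^2)$). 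Granting this, since $W \in L^1(\Omega^2)$ we get
\[
\int_{\Omega^2} W(x,y) f(x) f(y) \,\mathrm{d}\nu^{\otimes 2} = \lim_{n\to\infty} \int_{A_n \times A_n} W = 0 \,.
\]
Because $W \ge 0$ and $f \ge 0$, the integrand $W(x,y)f(x)f(y)$ is nonnegative, hence it vanishes $\nu^{\otimes 2}$-almost everywhere. Thus $W(x,y) = 0$ for almost every $(x,y)$ with $f(x) > 0$ and $f(y) > 0$, i.e.\ $W$ is zero almost everywhere on $\mathrm{supp}(f) \times \mathrm{supp}(f)$, which is exactly the statement that $\mathrm{supp}(f)$ is an independent set.

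Two minor points need care. First, $\mathrm{supp}(f)$ is only defined up to a null set (it depends on the choice of representative of $f$), but the independence condition is itself a mod-$0$ condition, so this is harmless; one fixes any representative. Second, one should check that the weak$^*$ limit $f$ indeed satisfies $0 \le f \le 1$ a.e.: test against indicators of $\{f < 0\}$ and $\{f > 1\}$ and use $0 \le \mathbf{1}_{A_n} \le 1$. This guarantees $f \ge 0$, which is what makes the nonnegativity argument for the integrand work.

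The main obstacle is the first step: justifying that $\mathbf{1}_{A_n} \otimes \mathbf{1}_{A_n} \to f \otimes f$ weak$^*$ in $L^\infty(\Omega^2)$ (equivalently, that one may integrate the fixed $L^1$ function $W$ against this tensor-square limit). The subtlety is that weak$^*$ convergence of a sequence does not in general pass through products, so one cannot just write $f \otimes f$ as a ``product of limits''; the argument genuinely uses that it is the \emph{same} sequence $A_n$ in both coordinates together with the uniform bound $\|\mathbf{1}_{A_n}\|_\infty \le 1$. Concretely, for a box test function $\mathbf{1}_{B \times C}$ one has $\int_{A_n \times A_n}\mathbf{1}_{B\times C} = \nu(A_n \cap B)\,\nu(A_n \cap C) \to \nu_f(B)\,\nu_f(C)$ where $\mathrm{d}\nu_f = f\,\mathrm{d}\nu$, which is $\int (f\otimes f)\mathbf{1}_{B\times C}$; finite linear combinations of such boxes are dense in $L^1(\Omega^2)$, and the uniform bound lets one pass to the closure. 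Since $W$ lies in this $L^1$ space, this suffices. (Alternatively, one can cite Lemma~20 of~\cite{HlHuPi:Komlos} directly, as indicated in the text, but the self-contained argument above is short.)
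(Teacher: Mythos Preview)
Your proof is correct. The paper does not give its own proof of this lemma; it simply quotes the statement from \cite[Lemma~20]{HlHuPi:Komlos}, so there is nothing to compare against beyond the citation you already noted. Your self-contained argument---upgrading the weak$^*$ convergence $\mathbf{1}_{A_n}\to f$ in $L^\infty(\Omega)$ to $\mathbf{1}_{A_n}\otimes\mathbf{1}_{A_n}\to f\otimes f$ in $L^\infty(\Omega^2)$ via density of tensor products in $L^1(\Omega^2)$ and the uniform bound, then testing against $W\in L^1(\Omega^2)$ and using nonnegativity---is the standard way to prove this and is exactly what one would expect the cited proof to do.
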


It follows that for a convergent sequence of graphons, the weak{*}
limit of independent sets in the sequence form an independent set
in the limit graphon.
\begin{cor}
\label{cor:independentSetsConvergence}Let $W_{n}:\Omega\times\Omega\rightarrow[0,1]$
be a sequence of graphons converging in the cut-norm to $W:\Omega\times\Omega\rightarrow[0,1]$.
Let $I_{n}\subset\Omega$ be an independent set in $W_{n}$. Suppose
that the indicator functions of the sets $I_{n}$ converge weak{*}
to a function $f$. Then $\mathrm{supp}\left(f\right)$ is an independent
set in $W$. 
\end{cor}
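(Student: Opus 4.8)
The plan is to deduce this directly from Lemma~\ref{lem:lemmaIndependentSets}, applied to the \emph{single} graphon $W$ and to the sets $A_{n}:=I_{n}$. The weak{*} convergence hypothesis of the corollary is verbatim the one required by Lemma~\ref{lem:lemmaIndependentSets}, and the conclusion of that lemma is exactly the assertion we want. So the only thing to check is the hypothesis $\lim_{n\to\infty}\int_{I_{n}\times I_{n}}W=0$; note that what we are handed instead is that each $I_{n}$ is independent in $W_{n}$, i.e.\ that $\int_{I_{n}\times I_{n}}W_{n}=0$, so we must transfer this from $W_{n}$ to the limit $W$.

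To do that, I would use that the cut-norm dominates integrals over boxes: for every measurable $S,T\subseteq\Omega$ one has $\left|\int_{S\times T}(W-W_{n})\,\mathrm{d}\nu^{\otimes2}\right|\le\left\Vert W-W_{n}\right\Vert_{\square}$, by the very definition of $\left\Vert\cdot\right\Vert_{\square}$. Taking $S=T=I_{n}$ and using $\int_{I_{n}\times I_{n}}W_{n}=0$ gives
\[
\left|\int_{I_{n}\times I_{n}}W\right|=\left|\int_{I_{n}\times I_{n}}W-\int_{I_{n}\times I_{n}}W_{n}\right|\le\left\Vert W-W_{n}\right\Vert_{\square}\xrightarrow[n\to\infty]{}0\;,
\]
since $W_{n}\to W$ in the cut-norm. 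Hence $\int_{I_{n}\times I_{n}}W\to0$, and Lemma~\ref{lem:lemmaIndependentSets} (with $A_{n}=I_{n}$) yields that $\mathrm{supp}(f)$ is an independent set in $W$, completing the proof.

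There is essentially no serious obstacle here; the corollary is a short packaging of Lemma~\ref{lem:lemmaIndependentSets} together with the box-integral bound. The one point worth flagging is that convergence is assumed in the \emph{cut-norm} on the fixed space $\Omega$ (not merely in cut-distance up to a measure-preserving rearrangement), so that the same sets $I_{n}\subseteq\Omega$ can legitimately be plugged into the integrals against $W_{n}$ and against $W$ at once; if one preferred to start from $\delta_{\square}(W_{n},W)\to0$, one would first replace each $W_{n}$ by a measure-preserving pullback (carrying $I_{n}$ along) so that $\left\Vert W_{n}-W\right\Vert_{\square}\to0$, and then run the argument above unchanged.
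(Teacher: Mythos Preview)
Your proof is correct and is essentially identical to the paper's own argument: both verify $\int_{I_{n}\times I_{n}}W\to 0$ via the cut-norm bound $\left|\int_{I_{n}\times I_{n}}(W-W_{n})\right|\le\left\Vert W-W_{n}\right\Vert_{\square}$ together with $\int_{I_{n}\times I_{n}}W_{n}=0$, and then invoke Lemma~\ref{lem:lemmaIndependentSets}. Your remark on cut-norm versus cut-distance is a fair clarification but does not differ from what the paper assumes.
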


\begin{proof}
Notice that 
\[
\int_{I_{n}\times I_{n}}W=\left|\int_{I_{n}\times I_{n}}\left(W_{n}-W\right)\right|\leq\left\Vert W_{n}-W\right\Vert _{\Square}.
\]
Thus, $\lim_{n\rightarrow\infty}\int_{I_{n}\times I_{n}}W=0$ and
the claim follows from Lemma~\ref{lem:lemmaIndependentSets}.
\end{proof}
The defining property of weak{*} convergence gives us that in the
setting of the corollary above, we have 
\begin{equation}
\lim_{n}\nu(I_{n})=\lim_{n}\int_{\Omega}\mathbf{1}_{I_{n}}=\int_{\Omega}f\le\nu\left(\mathrm{supp}\left(f\right)\right)\;,\label{eq:uio}
\end{equation}
where the last inequality uses that $f$ is bounded above by~1, since
it is a weak{*} limit of functions bounded above by~1. Thus, as a
consequence of Corollary~\ref{cor:independentSetsConvergence} we
get that the supremum of the measures of independent sets in a graphon
is attained. This leads us to the following definition.
\begin{defn}
The measure of the largest independent set of a graphon $W:\Omega\times\Omega\rightarrow[0,1]$
is called the \emph{independence number} of $W$ and denoted by $\alpha(W)$.
\end{defn}

Corollary~\ref{cor:independentSetsConvergence} yields upper semicontinuity
of the independence number. As mentioned in Section~\ref{subsec:OurContribution},
the sequence $Y_{n}\equiv\frac{1}{n}\rightarrow Y\equiv0$ shows that
we do not have lower semicontinuity in general.
\begin{cor}
\label{cor:indepsemicontinuous}Suppose that $\left(W_{n}\right)_{n}$
is a sequence of graphons that converges to $W$ in the cut-distance.
Then $\alpha(W)\ge\limsup_{n}\alpha\left(W_{n}\right)$.
\end{cor}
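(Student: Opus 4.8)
The plan is to deduce this directly from Corollary~\ref{cor:independentSetsConvergence} together with the compactness of the weak* topology on indicator functions. First I would fix a sequence of independent sets $I_n \in \mathcal{I}(W_n)$ whose measures approach $\limsup_n \alpha(W_n)$; by passing to a subsequence we may assume the limit $\lim_n \nu(I_n) = \limsup_n \alpha(W_n)$ actually exists. Since the indicator functions $\mathbf{1}_{I_n}$ all lie in the unit ball of $L^\infty(\Omega)$, which is weak* sequentially compact (as $\Omega$ is a standard probability space, $L^1(\Omega)$ is separable), after a further subsequence we may assume $\mathbf{1}_{I_n} \to f$ weak* for some measurable $f : \Omega \to [0,1]$.

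Next I would invoke Corollary~\ref{cor:independentSetsConvergence} to conclude that $\mathrm{supp}(f)$ is an independent set in $W$, hence $\alpha(W) \ge \nu(\mathrm{supp}(f))$. Finally, the chain of (in)equalities in~\eqref{eq:uio} gives $\nu(\mathrm{supp}(f)) \ge \int_\Omega f = \lim_n \nu(I_n) = \limsup_n \alpha(W_n)$, which is exactly the desired bound. (One small point worth spelling out: the cut-distance convergence hypothesis in the corollary statement is in terms of the cut-\emph{norm} on a common probability space; since the excerpt phrases Corollary~\ref{cor:independentSetsConvergence} for cut-norm convergence and the present statement is for cut-distance, I would note that after applying suitable measure-preserving rearrangements one reduces to the cut-norm case, or simply cite that the preceding results have already been set up to handle this — the independence number is an isomorphism invariant.)

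The only genuinely delicate step is the use of weak* sequential compactness, and even that is standard: it requires that the predual $L^1(\Omega)$ be separable, which holds because $\Omega$ is an atomless \emph{Borel} probability space. I expect no real obstacle here; the whole argument is a short wrapper around the corollary. I would present it as roughly: ``Pick $I_n \in \mathcal{I}(W_n)$ with $\nu(I_n) = \alpha(W_n)$ (the supremum is attained by the discussion following Corollary~\ref{cor:independentSetsConvergence}). Pass to a subsequence along which $\alpha(W_n) \to \limsup_n \alpha(W_n)$ and along which $\mathbf{1}_{I_n} \to f$ weak*. By Corollary~\ref{cor:independentSetsConvergence}, $\mathrm{supp}(f) \in \mathcal{I}(W)$, and by~\eqref{eq:uio}, $\alpha(W) \ge \nu(\mathrm{supp}(f)) \ge \int_\Omega f = \lim_n \nu(I_n) = \limsup_n \alpha(W_n)$.''
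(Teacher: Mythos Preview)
Your proposal is correct and follows essentially the same route as the paper: reduce cut-distance to cut-norm, pick maximum independent sets $I_n$ in $W_n$, pass to a subsequence realizing the $\limsup$ and a weak{*} limit $f$ via sequential Banach--Alaoglu, then invoke Corollary~\ref{cor:independentSetsConvergence} and the estimate~\eqref{eq:uio}. The paper's proof is nearly word-for-word the same, so there is nothing to add.
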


\begin{proof}
We may as well assume that $\left(W_{n}\right)_{n}$ converges to
$W$ in the cut-norm, and that the limit $\lim_{n}\alpha\left(W_{n}\right)$
exists. Now, for each $n$, consider an independent set $I_{n}$ in
$W_{n}$ of size $\alpha\left(W_{n}\right)$. Let $f$ be a weak{*}
accumulation point of the sequence of indicator functions of the sets
$\left(I_{n}\right)_{n}$; at least one such accumulation point exists
by the sequential Banach\textendash Alaoglu Theorem. By Corollary~\ref{cor:independentSetsConvergence},
$\mathrm{supp}\left(f\right)$ is an independent set, and by the same
calculation as in~(\ref{eq:uio}), the measure of $\mathrm{supp}\left(f\right)$
is at least $\lim_{n}\alpha\left(W_{n}\right)$.
\end{proof}
\medskip{}

\subsection{Structure of independent sets\label{subsec:StructIndep}}

In this section, we make some observations about the structure of
independent sets in a graphon. Often, rather than dealing with all
independent sets, it is convenient to restrict attention just to maximal
ones, from which all the remaining ones can be easily recovered. We
denote the set of maximal independent sets (modulo nullsets) in a
graphon $W$ by $\mathcal{I}_{\mathrm{max}}(W)\subset\mathcal{I}(W)$.
Even the set $\mathcal{I}_{\mathrm{max}}(W)$ can be quite complicated,
at least with respect to its cardinality. Indeed, let $W$ be a graphon
representing a disjoint union of countably many complete bipartite
graphs $\left(H_{n}=A_{n}\sqcup B_{n}\right)_{n=1}^{\infty}$, which
occupy measure $2^{-n}$ each. Then the maximal independent sets in
$W$ are all unions of maximal independent sets in all graphs $H_{n}$,
(for example $A_{1},A_{2},A_{3},B_{4},B_{5},\ldots$), of which there
are uncountably many.

Another perspective on the structure of $\mathcal{I}(W)$ comes from
polyhedral combinatorics. To motivate this, let us first recall an
approach common for finite graphs. Given a finite graph $G$, each
independent set $I$ in $G$ can viewed as a vector in $\{0,1\}^{V(G)}\subset\mathbb{R}^{V(G)}$.
Taking the convex hull of all such vectors, one gets what is called
an \emph{independent set }(or \emph{stable set})\emph{ polytope} $\mathrm{IND}(G)\subset\mathbb{R}^{V(G)}$.
Among many classical results about $\mathrm{IND}(G)$, let us mention
perhaps the most basic one.
\begin{prop}[{e.g. \cite[(9.1.3)]{MR936633}}]
\label{prop:finiteIND}For any graph $G$, each point $\mathbf{x}\in\mathrm{IND}(G)$
satisfies the following two families of conditions:
\begin{align}
0\le\mathbf{x}_{v} & \le1\quad\text{for each \ensuremath{v\in V(G)} and}\label{eq:IND1}\\
\mathbf{x}_{u}+\mathbf{x}_{v} & \le1\quad\text{for each \ensuremath{uv\in E(G)}\;.}\label{eq:IND2}
\end{align}
Further, (\ref{eq:IND1}) and (\ref{eq:IND2}) characterize $\mathrm{IND}(G)$
if and only if $G$ is bipartite.
\end{prop}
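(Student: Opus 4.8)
The plan is to prove the two families of inequalities first, and then handle the characterization in both directions. For the inequalities, it suffices by convexity to verify them at the extreme points, i.e.\ at the indicator vectors $\mathbf{1}_I$ of independent sets $I\in\mathcal{I}(G)$. Inequality~(\ref{eq:IND1}) is immediate since each coordinate of $\mathbf{1}_I$ is $0$ or $1$. For~(\ref{eq:IND2}), fix an edge $uv\in E(G)$; since $I$ is independent it cannot contain both endpoints of $uv$, so $(\mathbf{1}_I)_u+(\mathbf{1}_I)_v\le 1$. Both inequalities are preserved under convex combinations, so they hold on all of $\mathrm{IND}(G)$.

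For the characterization, write $P(G)$ for the polytope defined by~(\ref{eq:IND1}) and~(\ref{eq:IND2}); we have just shown $\mathrm{IND}(G)\subseteq P(G)$, and we must decide when equality holds. If $G$ is bipartite, say $V(G)=X\sqcup Y$, the plan is to show that every vertex of $P(G)$ is integral (equivalently lies in $\{0,1\}^{V(G)}$), since an integral point satisfying~(\ref{eq:IND2}) is exactly the indicator of an independent set. The cleanest route is to observe that the constraint matrix of $P(G)$ is the incidence matrix of $G$ augmented by the identity, which is totally unimodular precisely because $G$ is bipartite (the incidence matrix of a bipartite graph is TU — a standard fact, e.g.\ from \cite{MR936633}); hence the polyhedron $P(G)$ has only integral vertices, so $P(G)=\mathrm{conv}(P(G)\cap\mathbb{Z}^{V(G)})=\mathrm{IND}(G)$. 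If one prefers to avoid quoting total unimodularity, one can argue directly: given a fractional point $\mathbf{x}\in P(G)$, the set $F$ of fractional coordinates induces a subgraph in which every fractional coordinate in $X$ is "balanced" against one in $Y$ through the tight edge constraints; shifting mass along an alternating path or even cycle (which in a bipartite graph has even length) produces two points of $P(G)$ whose midpoint is $\mathbf{x}$, so $\mathbf{x}$ is not a vertex. Either way one concludes $P(G)=\mathrm{IND}(G)$.

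For the converse, suppose $G$ is not bipartite; then $G$ contains an odd cycle $C$ on an odd number $2\ell+1$ of vertices. The plan is to exhibit a point of $P(G)$ that is not in $\mathrm{IND}(G)$. Take $\mathbf{x}$ with $\mathbf{x}_v=\tfrac12$ for $v\in V(C)$ and $\mathbf{x}_v=0$ otherwise. This clearly satisfies~(\ref{eq:IND1}); and for any edge $uv\in E(G)$ we have $\mathbf{x}_u+\mathbf{x}_v\le \tfrac12+\tfrac12=1$, so $\mathbf{x}\in P(G)$. On the other hand, any point of $\mathrm{IND}(G)$ is a convex combination of indicators of independent sets, so its coordinate sum over any subset $S\subseteq V(G)$ is at most $\max_{I\in\mathcal{I}(G)}|I\cap S|$; taking $S=V(C)$, an independent set meets an odd cycle $C_{2\ell+1}$ in at most $\ell$ vertices, whereas $\sum_{v\in V(C)}\mathbf{x}_v=\tfrac{2\ell+1}{2}>\ell$. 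Hence $\mathbf{x}\notin\mathrm{IND}(G)$, so the inequalities~(\ref{eq:IND1})--(\ref{eq:IND2}) do not characterize $\mathrm{IND}(G)$ when $G$ is non-bipartite.

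The main obstacle is the "if" direction of the characterization, since that is where genuine structure (not just checking extreme points) is used; everything hinges on integrality of $P(G)$ for bipartite $G$. Once that is in hand — either by invoking total unimodularity of the bipartite incidence matrix or by the alternating-path uncrossing argument, both of which are classical — the rest is routine. The "only if" direction and the inclusion $\mathrm{IND}(G)\subseteq P(G)$ are elementary.
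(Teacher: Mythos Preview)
The paper does not actually prove Proposition~\ref{prop:finiteIND}; it is quoted as a classical result with a reference (``e.g.\ \cite[(9.1.3)]{MR936633}'') and no proof is supplied. So there is nothing in the paper to compare your argument against.

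That said, your proof is correct and is precisely the standard argument one finds in polyhedral combinatorics textbooks. The inclusion $\mathrm{IND}(G)\subseteq P(G)$ by checking extreme points is routine; the bipartite direction via total unimodularity of the incidence matrix (or the equivalent alternating-path argument) is the canonical route; and for the non-bipartite direction, placing $\tfrac12$ on the vertices of an odd cycle and separating it from $\mathrm{IND}(G)$ via the odd-cycle inequality $\sum_{v\in V(C_{2\ell+1})}\mathbf{x}_v\le\ell$ is exactly the classical counterexample. Incidentally, the paper's proof of the graphon analog (Proposition~\ref{prop:graphonIND}) uses the same idea for the non-bipartite direction: it shows the constant-$\tfrac12$ function violates a graphon version of the odd-cycle inequality.
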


Now, in the graphon setting we proceed as follows. We represent each
set $I\in\mathcal{I}(W)$ of a graphon $W:\Omega\times\Omega\rightarrow[0,1]$
by its characteristic function, which we view as an element in $\{0,1\}^{\Omega}\subset\mathbb{R}^{\Omega}$.
We can now take the closure (in the weak{*} topology) of the convex
hull of such functions and get what we call \emph{independent set
polyton} $\mathrm{IND}(W)\subset\mathbb{R}^{\Omega}$. Such a graphon
approach to polyhedral combinatorics has been introduced~\cite{DoHl:Polytons},
namely for the so-called matching polytope/polyton. To illustrate
the potential of this area, let us prove a part of a counterpart to
Proposition~\ref{prop:finiteIND}.
\begin{prop}
\label{prop:graphonIND}For any graph $W:\Omega\times\Omega\rightarrow[0,1]$,
each point $\mathbf{x}\in\mathrm{IND}(W)$ satisfies the following
two families of conditions:
\begin{align}
0\le\mathbf{x}_{v} & \le1\quad\text{for almost all \ensuremath{v\in\Omega} and}\label{eq:IND1W}\\
\mathbf{x}_{u}+\mathbf{x}_{v} & \le1\quad\text{for almost all \ensuremath{uv\in\mathrm{supp}\left(W\right)}\;.}\label{eq:IND2W}
\end{align}
Further, if $W$ is not bipartite, then (\ref{eq:IND1W}) and (\ref{eq:IND2W})
do not characterize $\mathrm{IND}(W)$.
\end{prop}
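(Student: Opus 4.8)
The plan is to prove the two assertions separately. For the first (the validity of \eqref{eq:IND1W} and \eqref{eq:IND2W}), I would first check these conditions on the characteristic function $\mathbf{1}_I$ of an independent set $I\in\mathcal{I}(W)$. Condition \eqref{eq:IND1W} is immediate since $\mathbf{1}_I$ takes values in $\{0,1\}$. For \eqref{eq:IND2W}, observe that if $(u,v)\in\mathrm{supp}(W)$ and both $u,v\in I$, then $(u,v)\in(I\times I)\cap\mathrm{supp}(W)$; but $I$ being independent means $W=0$ a.e.\ on $I\times I$, so the set of such pairs is $\nu^{\otimes 2}$-null. Hence $\mathbf{1}_I(u)+\mathbf{1}_I(v)\le 1$ for $\nu^{\otimes 2}$-almost every $(u,v)\in\mathrm{supp}(W)$. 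Next I would argue that both families of conditions are preserved under convex combinations (trivially, since they are linear inequalities that hold pointwise a.e.) and under weak{*} limits. For the latter, each inequality, say $\mathbf{x}_u+\mathbf{x}_v\le 1$ on $\mathrm{supp}(W)$, can be tested by integrating $\mathbf{x}$ against functions of the form $\mathbf{1}_{A}(u)g(v)+g(u)\mathbf{1}_A(v)$ supported appropriately, or more cleanly: the set $\{\mathbf{x}\in\mathbb{R}^\Omega : \mathbf{x}_u+\mathbf{x}_v\le 1 \text{ a.e.\ on } \mathrm{supp}(W),\ 0\le\mathbf{x}\le 1 \text{ a.e.}\}$ is convex and weak{*} closed (it is an intersection of half-spaces defined by continuous linear functionals, namely integration against nonnegative $L^1$ kernels), so it contains the weak{*} closure of the convex hull of the $\mathbf{1}_I$'s, which is exactly $\mathrm{IND}(W)$.

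For the second assertion, suppose $W$ is not bipartite; I want to exhibit a point $\mathbf{x}\in\mathbb{R}^\Omega$ satisfying \eqref{eq:IND1W} and \eqref{eq:IND2W} but not lying in $\mathrm{IND}(W)$. The natural candidate, mirroring the finite case, is $\mathbf{x}\equiv\tfrac12$ on (a suitable positive-measure piece of) $\mathrm{supp}(W)$, or more precisely the constant function $\tfrac12$. This clearly satisfies both families of conditions. It remains to show $\tfrac12\notin\mathrm{IND}(W)$. Since $W$ is not bipartite, $t(C_{2\ell+1},W)>0$ for some odd cycle length $2\ell+1$ (non-bipartiteness of a graphon is characterized by positive homomorphism density of some odd cycle). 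By Lemma~\ref{lem:rectangles} applied with $H=C_{2\ell+1}$ and a small $\epsilon$, there are pairwise disjoint sets $A_1,\dots,A_{2\ell+1}$ of equal measure $\alpha$ on which $W$ is positive except on a set of measure at most $\epsilon\alpha^2$ on each ``edge block'' $A_i\times A_{i+1}$ (indices mod $2\ell+1$).

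The separating functional is then built from these blocks: consider the linear functional $\Phi(\mathbf{x})=\frac{1}{\alpha}\int_{\bigcup_i A_i}\mathbf{x}$, i.e.\ the integral of $\mathbf{x}$ over the union of the odd-cycle blocks, normalized so that on the constant function $\tfrac12$ it evaluates to $\frac{2\ell+1}{2}$. I would then show that every $\mathbf{1}_I$ with $I\in\mathcal{I}(W)$ satisfies $\Phi(\mathbf{1}_I)\le \ell$ (strictly less than $\tfrac{2\ell+1}{2}$), which by linearity and weak{*} continuity of $\Phi$ passes to all of $\mathrm{IND}(W)$ and hence separates $\tfrac12$ from it. To see the bound $\Phi(\mathbf{1}_I)\le\ell$: write $a_i=\nu(A_i\cap I)/\alpha\in[0,1]$, so $\Phi(\mathbf{1}_I)=\sum_{i}a_i$. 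For consecutive blocks, independence of $I$ forces $W=0$ a.e.\ on $(A_i\cap I)\times(A_{i+1}\cap I)$; combined with the fact that $W>0$ except on an $\epsilon\alpha^2$-fraction of $A_i\times A_{i+1}$, we get $\nu^{\otimes2}((A_i\cap I)\times(A_{i+1}\cap I))\le\epsilon\alpha^2$, i.e.\ $a_i a_{i+1}\le\epsilon$. A short optimization argument shows $\sum_{i=1}^{2\ell+1}a_i$ subject to $a_ia_{i+1}\le\epsilon$ (cyclically) is at most $\ell+O(\sqrt\epsilon)\cdot(2\ell+1)$, which for $\epsilon$ small enough is $<\tfrac{2\ell+1}{2}$; indeed in an odd cycle one cannot have two ``large'' values adjacent, so at most $\ell$ of the $a_i$ can be close to $1$ while the rest are forced to be $O(\sqrt\epsilon)$.

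The main obstacle I anticipate is the last step: making the cyclic optimization $\max\sum a_i$ under $a_i a_{i+1}\le\epsilon$ rigorous and confirming it is bounded away from $\tfrac{2\ell+1}{2}$ uniformly for small $\epsilon$ — one must be a little careful because the constraints are multiplicative rather than the clean additive constraint $a_i+a_{i+1}\le 1$ of the finite case, so a direct ``two adjacent $a_i$ sum to at most $1$'' argument is not literally available and must be replaced by the quantitative ``if $a_i$ is bounded away from $0$ then $a_{i+1}$ is small'' dichotomy; choosing $\epsilon$ after fixing $\ell$ resolves this. A secondary, more routine obstacle is verifying weak{*} closedness of the constraint polytope carefully (ensuring the a.e.\ conditions are genuinely cut out by countably many $L^1$-functionals, using separability of $L^1$ on the standard Borel space $\Omega$).
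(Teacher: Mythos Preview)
Your proposal is correct and follows essentially the same route as the paper. Both parts match: for the first assertion you verify the constraints on $\mathbf{1}_I$, pass to convex combinations, and then to the weak{*} closure (the paper does this in one sentence; your more careful discussion of weak{*} closedness via $L^1$-functionals is sound). For the second assertion you use exactly the paper's strategy: take $\mathbf{x}\equiv\tfrac12$, invoke the odd-cycle characterization of non-bipartiteness, apply Lemma~\ref{lem:rectangles} to get blocks $A_1,\dots,A_{2\ell+1}$, and separate via the functional $\Phi(\mathbf{x})=\tfrac{1}{\alpha}\int_{\cup_i A_i}\mathbf{x}$. The only cosmetic difference is that the paper, instead of bounding $\Phi(\mathbf{1}_I)$ uniformly and invoking weak{*} continuity of $\Phi$, first approximates $\tfrac12$ by a finite convex combination $\mathbf{y}^*$ and then passes to a single extreme point $I_j$; your formulation is slightly cleaner but equivalent. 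Your anticipated ``obstacle'' (the cyclic optimization $\sum a_i$ under $a_ia_{i+1}\le\epsilon$) is handled in the paper by exactly the threshold dichotomy you describe: call $i$ \emph{marked} if $a_i$ exceeds a fixed small threshold (the paper uses $1/(5\ell)$), observe no two consecutive indices can be marked, hence at most $\ell$ are, and bound the sum by $\ell+(\ell+1)\cdot(\text{threshold})<\ell+\tfrac12$.
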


\begin{problem}
In analogy with Proposition~\ref{prop:finiteIND}, establish the
other direction in Proposition~\ref{prop:graphonIND}.
\end{problem}

\begin{proof}[Proof of Proposition~\ref{prop:graphonIND}]
Obviously, every indicator function of an independent set $I\in\mathcal{I}(W)$
satisfies~(\ref{eq:IND1W}) and~(\ref{eq:IND2W}). These inequalities
are then inherited to convex combinations and closure, thus proving
the first part. 

Suppose now that $W$ is not bipartite. We shall prove that the point
$\mathbf{y}\equiv\frac{1}{2}$ is not in $\mathrm{IND}(W)$, even
though it apparently satisfies~(\ref{eq:IND1W}) and~(\ref{eq:IND2W}).
By \cite[Proposition 21]{DoHl:Polytons} (stated also as Proposition~\ref{prop:k-partite}
below), we know that $t(C_{2\ell+1},W)>0$ for some $\ell\in\mathbb{N}$.
By Lemma~\ref{lem:rectangles} we know that there exist disjoint
sets $A_{1},A_{2},\ldots,A_{2\ell+1}\subset\Omega$ of the same measure
$\alpha>0$ such that that $W$ is positive everywhere on $A_{i}\times A_{j}$
except a set of measure at most $\frac{\alpha^{2}}{10\ell}$. Now,
suppose for contradiction that $\mathbf{y}$ is in the weak{*} closure
of convex combinations of characteristic functions of independent
sets. In particular, there exists a point $\mathbf{y}^{*}$ such that
we have 
\begin{equation}
\int_{\omega\in\bigcup_{i=1}^{2\ell+1}A_{i}}\mathbf{y}^{*}(\omega)>\int_{\omega\in\bigcup_{i=1}^{2\ell+1}A_{i}}\mathbf{y}(\omega)-\frac{\alpha}{10}=\frac{1}{2}(2\ell+1)\alpha-\frac{\alpha}{10}\label{eq:integrallarge}
\end{equation}
 and such that $\mathbf{y}^{*}$ is a convex combination of characteristic
functions of independent sets, $\mathbf{y}^{*}=\sum_{j=1}^{t}\alpha_{j}\mathbf{1}_{I_{j}}$,
where $\alpha_{j}\ge0$ are the convex coefficients and $I_{j}\in\mathcal{I}(W)$.
We get~(\ref{eq:integrallarge}) must hold also for one of the terms
appearing in the convex combination, i.e., there exists $j\in[t]$
such that
\begin{equation}
\nu\left(I_{j}\cap\bigcup_{i=1}^{2\ell+1}A_{i}\right)\ge\frac{1}{2}(2\ell+1)\alpha-\frac{\alpha}{10}\;.\label{eq:larmeas}
\end{equation}
Call an index $i\in[2\ell+1]$ \emph{marked }if $\nu\left(I_{j}\cap A_{i}\right)\ge\frac{\alpha}{5\ell}$.
Observe that we cannot have two consecutive marked indices $i$ and
$i+1$ (with labeling considered is modulo $2\ell+1$), since $W_{\restriction A_{i}\times A_{i+1}}$
is positive on most of the domain and $I_{j}$ is an independent set.
Hence, there are at most $2\ell$ marked indices. For a marked index
$i$ we have $\nu(I_{j}\cap A_{i})\le\alpha$ and for an unmarked
index $i$ we have $\nu(I_{j}\cap A_{i})\le\frac{\alpha}{5\ell}$.
Hence,
\[
\nu\left(I_{j}\cap\bigcup_{i=1}^{2\ell+1}A_{i}\right)\le\ell\cdot\alpha+(\ell+1)\cdot\frac{\alpha}{5\ell}\;.
\]
This contradicts~(\ref{eq:larmeas}).
\end{proof}

\subsection{\label{subsec:CooleyKangPikhurko}A recent result of Cooley, Kang
and Pikhurko}

We originally thought that there is no way how to relate subgraph
densities of a graphon to its independence number. However, recently
Cooley, Kang and Pikhurko~\cite{CKP:SizeConvergence} found a natural
way to fill this gap, which we state below.
\begin{thm}
Let $W:\Omega\times\Omega\rightarrow[0,1]$ be a graphon. For each
$k\in\mathbb{N}$, let $a_{k}$ be the probability that $\mathbb{G}(k,W)$
does not contain any edges, that is $a_{k}=t_{\mathrm{ind}}(I_{k},W)$,
where $I_{k}$ the edgeless graph on $k$ vertices. Then the limit
$\lim_{k\to\infty}\sqrt[k]{a_{k}}$ exists, and is equal to the independence
number of $W$.
\end{thm}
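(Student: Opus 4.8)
The plan is to sandwich $\limsup_k \sqrt[k]{a_k}$ from above by $\alpha(W)$ and $\liminf_k \sqrt[k]{a_k}$ from below by $\alpha(W)$ (after first checking the limit exists via a supermultiplicativity argument), so that all three coincide.

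\medskip

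\textbf{Existence of the limit.} First I would observe that the sequence $(a_k)$ is supermultiplicative: $a_{k+\ell} \ge a_k \cdot a_\ell$. Indeed, sampling $k+\ell$ points $x_1,\dots,x_{k+\ell}$ from $\Omega$, the event that $\mathbb{G}(k+\ell,W)$ is edgeless is implied by the intersection of the three independent-ish events ``no edge within the first $k$ points'', ``no edge within the last $\ell$ points'', and ``no edge between the two blocks''; more cleanly, $a_{k+\ell} = \int \prod_{i<j}(1-W(x_i,x_j)) \ge \int \prod_{i<j\le k}(1-W) \cdot \prod_{k<i<j}(1-W)$ because the cross terms $(1-W(x_i,x_j))$ are all $\le 1$, and this factorises as $a_k a_\ell$ by Fubini. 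By Fekete's lemma $\lim_k \sqrt[k]{a_k}$ exists and equals $\sup_k \sqrt[k]{a_k}$ (note $a_k \le 1$, so the limit is finite; and if some $a_k=0$ the lemma still applies with the convention that the limit is $0$, though in fact $a_k>0$ whenever $\alpha(W)>0$).

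\medskip

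\textbf{Lower bound $\lim_k \sqrt[k]{a_k} \ge \alpha(W)$.} Let $A = \mathrm{supp}$-type maximum independent set with $\nu(A) = \alpha(W)$, which exists by the discussion following Corollary~\ref{cor:independentSetsConvergence}. With probability $\nu(A)^k = \alpha(W)^k$ all $k$ sampled points land in $A$, and conditioned on that event $\mathbb{G}(k,W)$ is almost surely edgeless since $W=0$ a.e.\ on $A\times A$. Hence $a_k \ge \alpha(W)^k$, so $\sqrt[k]{a_k} \ge \alpha(W)$ for every $k$, and the same bound passes to the limit. (If $\alpha(W)=0$ this is vacuous.)

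\medskip

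\textbf{Upper bound $\lim_k \sqrt[k]{a_k} \le \alpha(W)$.} This is the crux and the step I expect to be the main obstacle. The idea is: if $\mathbb{G}(k,W)$ is edgeless, its vertex set is an independent set in that finite graph, and one wants to ``transfer'' this to a large independent set in $W$ itself. The natural route is a probabilistic/compactness argument. Fix a small $\epsilon>0$ and suppose toward a contradiction that $\sqrt[k]{a_k} \ge \alpha(W)+\epsilon$ for all large $k$, i.e.\ $a_k \ge (\alpha(W)+\epsilon)^k$. Sample $x_1,\dots,x_k$; let $S_k = \{x_1,\dots,x_k\}$ and consider the (random) empirical independent set. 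On the edgeless event — which has probability $\ge (\alpha(W)+\epsilon)^k$ — the set $S_k$ spans no edge of $\mathbb{G}(k,W)$, but that does \emph{not} immediately say $\int_{S_k \times S_k} W$ is small, since $W$ could be positive yet the coin flips all came up non-edges. To control this, I would instead work with the set $B_\delta(S_k)$ or pass through a regularity/counting estimate: the expected number of \emph{edges} of $\mathbb{G}(k,W)$ is $\binom{k}{2}t(K_2,W)$, and more usefully, if $W$ is substantially positive on $U\times U$ for a set $U$ of measure $\beta$, then the probability that $k$ i.i.d.\ points all land in $U$ and yet span no edge is at most $\beta^k \cdot (\text{something exponentially small in } k)$ unless $W$ is essentially $0$ on $U\times U$. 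Making this quantitative: partition $[0,1]$-valued $W$ and use that for any measurable $U$ with $\nu(U)=\beta$, the conditional edgeless probability given all points in $U$ is at most $\exp(-c(U) k^2/\,?)$ — actually the clean statement is that this conditional probability is at most $(1 - d)^{\binom{k}{2}\cdot(\text{const})}$ where $d = \frac{1}{\beta^2}\int_{U\times U} W$, which decays \emph{doubly} exponentially unless $d=0$. Summing the contributions over a fine partition of $\Omega$ according to which cells the sampled points hit, the only surviving terms (to first exponential order) are those where all $k$ points lie in a single ``block'' on which $W$ vanishes a.e., i.e.\ inside an independent set; the total measure of the largest such block is at most $\alpha(W)$, giving $a_k \le (\alpha(W)+o(1))^k$ and hence $\sqrt[k]{a_k} \to \alpha(W)$, a contradiction with the assumed $\epsilon$-gap. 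Alternatively — and this is likely the cleanest packaging — one can invoke a weak-regularity partition $\mathcal P$ of $W$ into finitely many parts, approximate $W$ by the step function $W_{\mathcal P}$, bound $a_k(W)$ by $a_k(W_{\mathcal P}) + (\text{cut-norm error})^{\text{poly}(k)}$ via a counting lemma, and then for the step function reduce to a finite combinatorial optimization whose exponential rate is exactly the measure of the largest union of parts inducing density $0$, which is $\alpha(W_{\mathcal P}) \le \alpha(W) + o(1)$.

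\medskip

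The delicate point throughout is the interchange of the $k\to\infty$ limit (which tracks the exponential rate $\sqrt[k]{\cdot}$) with the regularization/approximation error, since a cut-norm error $\eta$ typically inflates to $\eta \cdot \mathrm{poly}(k)$ or even $k^2 \eta$ in a counting-lemma bound for $t_{\mathrm{ind}}(I_k, \cdot)$, and one must choose $\eta$ after $\epsilon$ but the partition size (hence the combinatorial optimum) depends on $\eta$; handling this requires sending $\eta \to 0$ along a subsequence and using that $\alpha$ of the step approximations converges to $\alpha(W)$ — for which upper semicontinuity (Corollary~\ref{cor:indepsemicontinuous}) is precisely the right tool. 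I would expect the write-up to hinge on a lemma of the form ``for every $\epsilon>0$ there is $\eta>0$ and $k_0$ such that for $k\ge k_0$, $a_k \le (\alpha(W)+\epsilon)^k$,'' proved by the regularization argument above.
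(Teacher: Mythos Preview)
The paper does not prove this theorem at all: it is quoted from Cooley, Kang and Pikhurko and stated without proof. So there is no ``paper's own proof'' to compare against; I can only assess your sketch on its merits.

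Your Fekete step has the inequality reversed. Dropping the cross factors $(1-W(x_i,x_j))\in[0,1]$ \emph{increases} the product, so
\[
\prod_{1\le i<j\le k+\ell}(1-W)\ \le\ \prod_{1\le i<j\le k}(1-W)\cdot\prod_{k< i<j\le k+\ell}(1-W),
\]
and integrating gives $a_{k+\ell}\le a_k a_\ell$. The sequence is \emph{sub}multiplicative, so Fekete yields $\lim_k a_k^{1/k}=\inf_k a_k^{1/k}$, not the supremum. Your lower bound $a_k\ge\alpha(W)^k$ is correct and still gives $\lim_k a_k^{1/k}\ge\alpha(W)$; note, incidentally, that with the correct Fekete direction the upper bound reduces to exhibiting, for each $\epsilon>0$, \emph{some} $k$ with $a_k^{1/k}\le\alpha(W)+\epsilon$, rather than all large $k$.

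Even so, your upper-bound argument is not a proof. The regularity route you outline has a genuine circularity that you flag but do not resolve: the counting-lemma bound for $t_{\mathrm{ind}}(I_k,\cdot)$ gives an additive error of order $\binom{k}{2}\|W-W_{\mathcal P}\|_{\square}$, so to deduce anything about $a_k(W)^{1/k}$ you need $\binom{k}{2}\eta$ small compared with $(\alpha(W)+\epsilon)^k$; this forces $\eta$ to depend on $k$, but the step graphon $W_{\mathcal P}$ (and in particular its minimum positive value, which governs how large $k$ must be before $a_k(W_{\mathcal P})^{1/k}$ is close to $\alpha(W_{\mathcal P})$) depends on $\eta$. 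Your first ``direct probabilistic'' paragraph is a heuristic, not an argument: the assertion that the only contributions surviving ``to first exponential order'' are those with all points in a single independent block is exactly the statement to be proved, and the $(1-d)^{\binom{k}{2}\cdot(\text{const})}$ estimate is not justified for general (non-step) $W$. In short, the existence of the limit and the inequality $\lim_k a_k^{1/k}\ge\alpha(W)$ are fine (modulo the sign slip), but the inequality $\lim_k a_k^{1/k}\le\alpha(W)$ needs a real argument.
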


\section{\label{sec:Coloring-concepts}Coloring concepts}

\subsection{Ordinary colorings}

First, we define a counterpart to the usual concept of coloring.

\begin{defn}
\label{def:chi}Let $W:\Omega\times\Omega\rightarrow[0,1]$ be a graphon.
We say that a measurable function $f:\Omega\rightarrow[k]$ is a \emph{proper
coloring} of $W$ with $k$ colors if, for each $i\in[k]$, the set
$f^{-1}(i)$ is an independent set in $W$. The \emph{chromatic number}
$\chi(W)$ is defined as the minimal number of colors for which a
proper coloring exists. Here, as will be with other versions of the
chromatic number, when no proper coloring of $W$ exists, $\chi(W)$
is taken to be infinity. In particular, this notion splits the space
of graphons into graphons of \emph{finite} and \emph{infinite chromatic
number}.
\end{defn}

The next easy proposition shows that Definition~\ref{def:chi} is
consistent with the graph definition of the chromatic number.
\begin{prop}
\label{prop:chifiniteVSgraphon}Suppose $G$ is a finite graph and
let $W:\Omega\times\Omega\rightarrow[0,1]$ be its graphon representation.
Then $\chi(G)=\chi(W)$.
\end{prop}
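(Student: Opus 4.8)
The plan is to prove the two inequalities $\chi(W) \le \chi(G)$ and $\chi(G) \le \chi(W)$ separately, using the graphon representation structure $\Omega = \bigsqcup_{v \in V(G)} \Omega_v$ that comes with the hypothesis.

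For the inequality $\chi(W) \le \chi(G)$, I would take a proper coloring $c\colon V(G) \to [k]$ of $G$ with $k = \chi(G)$ colors, and define $f\colon \Omega \to [k]$ by setting $f(\omega) = c(v)$ whenever $\omega \in \Omega_v$. This $f$ is measurable (it is constant on each block of a finite measurable partition). To check it is a proper coloring of $W$, fix a color $i \in [k]$; then $f^{-1}(i) = \bigcup_{v : c(v) = i} \Omega_v$. Since $c$ is proper, no two vertices $v, w$ with $c(v) = c(w) = i$ are adjacent, so $vw \notin E(G)$, hence $W$ is $0$ almost everywhere on $\Omega_v \times \Omega_w$ by the definition of a graphon representation; also $W$ is $0$ a.e.\ on $\Omega_v \times \Omega_v$ for $vw\notin E(G)$ includes the ``diagonal'' case $v=w$ (in a graph representation a vertex is not adjacent to itself). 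Therefore $W$ vanishes a.e.\ on $f^{-1}(i) \times f^{-1}(i)$, so $f^{-1}(i) \in \mathcal{I}(W)$, and $f$ witnesses $\chi(W) \le k$.

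For the reverse inequality $\chi(G) \le \chi(W)$, I would start from a proper coloring $f\colon \Omega \to [k]$ of $W$ with $k = \chi(W)$ colors (if $\chi(W) = \infty$ there is nothing to prove, though in fact we will see it is finite). The subtlety is that $f$ need not be constant on the blocks $\Omega_v$. The natural fix is to pick, for each $v \in V(G)$, a color $c(v) \in [k]$ that is ``popular'' on $\Omega_v$, e.g.\ $c(v) := \arg\max_{i \in [k]} \nu(f^{-1}(i) \cap \Omega_v)$; since $\nu(\Omega_v) = 1/|V(G)| > 0$, at least one color occupies positive measure in $\Omega_v$, so $c(v)$ is well-defined with $\nu(f^{-1}(c(v)) \cap \Omega_v) > 0$. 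I then claim $c$ is a proper coloring of $G$: if $vw \in E(G)$ but $c(v) = c(w) = i$, then $f^{-1}(i)$ meets both $\Omega_v$ and $\Omega_w$ in positive measure, so $\nu^{\otimes 2}\big((f^{-1}(i)\cap\Omega_v)\times(f^{-1}(i)\cap\Omega_w)\big) > 0$; but on $\Omega_v \times \Omega_w$ the graphon $W$ is a.e.\ equal to $1$ (since $vw \in E(G)$), so $W$ is positive on a positive-measure subset of $f^{-1}(i) \times f^{-1}(i)$, contradicting that $f^{-1}(i)$ is an independent set of $W$. Hence $c$ is proper and $\chi(G) \le k = \chi(W)$.

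I do not expect a serious obstacle here; this proposition is genuinely easy, as the authors indicate. The only point requiring a little care is the second direction, where one must not assume the proper coloring of $W$ respects the block partition — the popularity/pigeonhole argument above handles that, and the key mechanism is simply that $W$ being a.e.\ $1$ on $\Omega_v \times \Omega_w$ for an edge $vw$ is incompatible with an independent set having positive-measure intersection with both blocks. One should also remember to dispose of the case $\chi(W) = \infty$ (trivial, as $\chi(G) < \infty$ always for a finite graph) and, symmetrically, note that the first direction shows $\chi(W) \le \chi(G) < \infty$, so the graphon representation of a finite graph always has finite chromatic number.
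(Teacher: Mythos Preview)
Your proof is correct and follows essentially the same approach as the paper: both directions are handled identically, with the first by pushing a graph coloring to the blocks, and the second by choosing for each vertex any color occupying positive measure in its block (the paper phrases this as ``there exists at least one index $\ell_j$'' rather than taking an argmax, but this is the same idea). Your write-up is in fact a bit more explicit about why the resulting map is proper.
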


\begin{proof}
Let $\Omega=\Omega_{1}\sqcup\dots\sqcup\Omega_{n}$ be the partition
for $W$ corresponding to the vertices of $G$. If $U_{1}\sqcup U_{2}\sqcup\ldots\sqcup U_{k}=V(G)$
is a proper $k$-coloring of $G$, then the independent sets $\left(\bigcup_{v\in U_{i}}\Omega_{v}\right)_{i\in[k]}$
give a proper $k$-coloring of $W$. Thus, $\chi(W)\le\chi(G)$. Conversely,
suppose that $I_{1}\sqcup I_{2}\sqcup\ldots\sqcup I_{k}=\Omega$.
Now, for every $j\in[n]$, there exists at least one index, say $\ell_{j}\in[k]$,
such that $\Omega_{j}\cap I_{\ell_{j}}$ has positive measure. Obviously,
the map $j\mapsto\ell_{j}$ is a proper coloring of $G$.
\end{proof}
The graphons $Y_{n}\rightarrow Y$ from Section~\ref{subsec:OurContribution}
show that the chromatic number is not upper continuous. Indeed, we
have $\chi(Y_{n})=\infty$ and $\chi(Y)=1$. On the other hand, the
following theorem shows lower semicontinuity.

\begin{thm}
\label{thm:chromaticLimit}Let $W:\Omega\times\Omega\rightarrow[0,1]$
be a graphon.
\end{thm}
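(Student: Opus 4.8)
The plan is to prove the lower semicontinuity statement $\chi(W)\le\liminf_n\chi(W_n)$ (part~(a)) by a weak* compactness argument applied to the color classes, exactly in the spirit of the proof of Corollary~\ref{cor:indepsemicontinuous}. First I would make the routine reductions: it suffices to treat the case $\liminf_n\chi(W_n)=k<\infty$, and after passing to a subsequence we may assume $\chi(W_n)=k$ for every $n$; moreover, replacing each $W_n$ by a pull-back under a measure-preserving map (which changes neither $\chi(W_n)$ nor the cut-distance to $W$, cf.\ the opening of the proof of Corollary~\ref{cor:indepsemicontinuous}), we may assume $W_n\to W$ in the cut-\emph{norm}. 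For each $n$ fix a proper $k$-coloring, i.e.\ a partition $\Omega=I_n^1\sqcup\cdots\sqcup I_n^k$ into sets $I_n^i$ that are independent in $W_n$.

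Next, for each color $i\in[k]$ I would extract, by the sequential Banach--Alaoglu theorem together with a finite diagonal argument over the $k$ colors, one subsequence along which $\mathbf{1}_{I_n^i}\to g_i$ weak* simultaneously for all $i\in[k]$. By Corollary~\ref{cor:independentSetsConvergence}, each $\mathrm{supp}(g_i)$ is an independent set in $W$. The crucial point — the one place where the partition structure matters and not merely the independence of a single set — is that these supports still cover $\Omega$ modulo a nullset: since $\sum_{i=1}^k\mathbf{1}_{I_n^i}\equiv1$ for every $n$, linearity and uniqueness of weak* limits give $\sum_{i=1}^k g_i=1$ almost everywhere; as each $g_i\ge0$ a.e., at almost every $\omega\in\Omega$ some $g_i(\omega)>0$, hence $\omega\in\bigcup_{i=1}^k\mathrm{supp}(g_i)$. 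Finally, setting $h(\omega)=\min\{i:\omega\in\mathrm{supp}(g_i)\}$ defines (a.e.) a measurable map $\Omega\to[k]$ whose classes $h^{-1}(i)\subseteq\mathrm{supp}(g_i)$ are subsets of independent sets, hence independent; thus $h$ is a proper $k$-coloring of $W$ in the sense of Definition~\ref{def:chi}, and $\chi(W)\le k$.

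I expect the main obstacle to be purely one of bookkeeping rather than of substance: the argument is short, but care is needed that (i) the cut-distance–to–cut-norm reduction truly preserves $\chi$, (ii) weak* accumulation points are chosen along a \emph{single} subsequence for all $k$ color classes at once, and (iii) the weak* limit of the constant function $1$ is again $1$ — this is what forces $\sum_i g_i=1$ and hence the covering property that upgrades "$\mathrm{supp}(g_i)$ is independent" into "$W$ is $k$-colorable."

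The same scheme applies verbatim to the $b$-fold colorings of the companion coloring notions introduced below (one replaces $[k]$ by $\binom{[k]}{b}$ and takes the weak* limits of $\mathbf{1}_{p_n^{-1}(\{S\})}$ over $S\in\binom{[k]}{b}$, observing that the per-layer independence conditions pass to the limit as above), which I expect is the content of part~(b); and the sequence $Y_n\equiv\frac1n\to Y\equiv0$, for which $\chi(Y_n)=\infty$ while $\chi(Y)=1$, shows that the complementary (upper) semicontinuity fails.
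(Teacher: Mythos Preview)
Your argument for part~(a) is correct, but it is a genuinely different route from the paper's. The paper does not touch weak* compactness here at all: it invokes Proposition~\ref{prop:k-partite} (the nontrivial direction, quoted from \cite{DoHl:Polytons}), which says that $\chi(W)\ge k$ forces $t(F,W)>0$ for some finite $F$ with $\chi(F)\ge k$; continuity of subgraph densities under cut-distance then gives $t(F,H_n)>0$, hence $\chi(H_n)\ge k$, for all large $n$. Your approach is more self-contained --- it avoids importing Proposition~\ref{prop:k-partite} and instead recycles Corollary~\ref{cor:independentSetsConvergence} color-by-color, the only new ingredient being the observation $\sum_i g_i=1$ a.e.\ to guarantee covering. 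The paper's route, on the other hand, simultaneously yields the subgraph-density characterization of $k$-colorability, which is used again later (e.g.\ in the proof of part~(b) and in Section~\ref{sec:PerfectGraphons}).

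You have, however, misread part~(b). It is \emph{not} about $b$-fold colorings; it asserts that for the random graphs $G_n\sim\mathbb{G}(n,W)$ one has $\chi(W)=\lim_n\chi(G_n)$ almost surely. Your argument for (a) (or the paper's) gives the inequality $\chi(W)\le\liminf_n\chi(G_n)$, since $G_n\to W$ in cut-distance a.s.; but the matching upper bound $\limsup_n\chi(G_n)\le\chi(W)$ requires a separate (easy) step. The paper deduces it from the easy direction of Proposition~\ref{prop:k-partite}: if $\chi(W)\le k$ then every graph of chromatic number $\ge k+1$ has zero density in $W$, hence is sampled with probability zero. Equivalently and without Proposition~\ref{prop:k-partite}, one can argue directly that a partition $\Omega=\bigsqcup_{i=1}^k I_i$ into $W$-independent sets pulls back along the sampled points to a proper $k$-coloring of $G_n$ almost surely. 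Either way, your proposal as written does not address this half of~(b).
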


\begin{enumerate}
\item[(a)] For every sequence of graphs $\left(H_{n}\right)$ that converges
to $W$ in the cut-distance we have $\chi(W)\le\liminf_{n\rightarrow\infty}\chi(H_{n})$.
\item[(b)] For the sequence $\left(G_{n}\sim\mathbb{G}(n,W)\right)$ we have
$\chi(W)=\lim_{n\rightarrow\infty}\chi(G_{n})$, almost surely.
\end{enumerate}
The next proposition, which can be found in~\cite[Proposition 5]{DoHl:Polytons},
is the key towards proving Theorem~\ref{thm:chromaticLimit}. We
remark that the proof of this proposition given in~\cite{DoHl:Polytons}
is short but non-trivial.
\begin{prop}
\label{prop:k-partite}Suppose $k\in\mathbb{N}$ and $W:\Omega\times\Omega\rightarrow[0,1]$
is a graphon. Then $\chi(W)\le k$ if and only if $t(G,W)=0$ for
each finite graph $G$ with $\chi(G)\geq k+1$. 
\end{prop}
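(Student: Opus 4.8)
The plan is to prove the two implications separately; the reverse one carries the real content.

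\emph{From $\chi(W)\le k$ to vanishing densities.} I would fix a proper colouring $f\colon\Omega\to[k]$, so that each $f^{-1}(i)$ is an independent set, and take a finite graph $G$ with $\chi(G)\ge k+1$. Split the domain $\Omega^{V(G)}$ of the integral defining $t(G,W)$ according to the colour pattern $\mathbf{x}\mapsto(f(x_v))_{v\in V(G)}\in[k]^{V(G)}$; there are only finitely many patterns, and since $\chi(G)\ge k+1$ none of them is a proper colouring of $G$. Hence on the part corresponding to a pattern $\phi$ there is an edge $uv\in E(G)$ with $\phi(u)=\phi(v)=:i$, which forces $(x_u,x_v)\in f^{-1}(i)\times f^{-1}(i)$; as $W$ vanishes $\nu^{\otimes2}$-almost everywhere there, the integrand $\prod_{uv\in E(G)}W(x_u,x_v)$ vanishes almost everywhere on that part. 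Summing over the finitely many patterns gives $t(G,W)=0$.

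\emph{From vanishing densities to $\chi(W)\le k$: the combinatorial input.} The first step is that the hypothesis forces the inhomogeneous random graph $\mathbb{G}(n,W)$ to be almost surely $k$-colourable, for every fixed $n$. Indeed the probability that $\chi(\mathbb{G}(n,W))\ge k+1$ equals $\sum_F t_{\mathrm{ind}}(F,W)$, where $F$ ranges over the finitely many graphs on the vertex set $[n]$ with $\chi(F)\ge k+1$, and $t_{\mathrm{ind}}(F,W)\le t(F,W)=0$ for each such $F$ by hypothesis. A union bound over $n\in\mathbb{N}$ shows that, almost surely, $\chi(\mathbb{G}(n,W))\le k$ for every $n$ simultaneously.

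\emph{Passing to the limit.} Fix a sample realising this event and for which in addition $\delta_{\Square}(\mathbb{G}(n,W),W)\to0$ (a sampling lemma, see~\cite{Lovasz2012}); after a measure-preserving relabelling we obtain graphon representations $W_n$ of $\mathbb{G}(n,W)$ with $\|W_n-W\|_{\Square}\to0$. By Proposition~\ref{prop:chifiniteVSgraphon} each $W_n$ is $k$-colourable, say $\Omega=I_1^n\sqcup\dots\sqcup I_k^n$ with each $I_i^n$ independent in $W_n$. By the sequential Banach--Alaoglu theorem, after passing to a subsequence we may assume $\mathbf{1}_{I_i^n}\to g_i$ weak{*} for every $i\in[k]$. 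Then Corollary~\ref{cor:independentSetsConvergence} shows that each $\mathrm{supp}(g_i)$ is an independent set in $W$, while $\sum_i\mathbf{1}_{I_i^n}\equiv1$ forces $\sum_i g_i\equiv1$; hence $\sum_i\mathbf{1}_{\mathrm{supp}(g_i)}\ge\sum_i g_i\equiv1$, so that $\mathrm{supp}(g_1),\dots,\mathrm{supp}(g_k)$ are independent sets covering $\Omega$, and disjointifying them produces a proper $k$-colouring, i.e.\ $\chi(W)\le k$.

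I expect this last limiting step to be the delicate point. Tempting but wrong is to build a colouring of $W$ by blowing up a proper colouring of a finite sampled graph: a box on which $W$ merely vanishes at one sampled pair of points need not be an independent set. The correct route is to go through cut-norm convergence $W_n\to W$ and the weak{*}-closedness of independent sets from Corollary~\ref{cor:independentSetsConvergence} (equivalently Lemma~\ref{lem:lemmaIndependentSets}). Alternatively, this implication may simply be quoted from \cite[Proposition~5]{DoHl:Polytons}.
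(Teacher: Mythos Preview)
The paper does not actually prove this proposition: it quotes it from \cite[Proposition~5]{DoHl:Polytons}, remarking only that the proof there is ``short but non-trivial.'' So there is no in-paper argument to compare against; your closing sentence (``this implication may simply be quoted from \cite[Proposition~5]{DoHl:Polytons}'') is precisely what the paper does.

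That said, your independent proof is correct. The forward direction is routine. For the converse, the chain
\[
t(F,W)=0 \;\Rightarrow\; t_{\mathrm{ind}}(F,W)=0 \;\Rightarrow\; \chi(\mathbb{G}(n,W))\le k\ \text{a.s.} \;\Rightarrow\; \chi(W_n)\le k
\]
is sound, and the limiting step is handled properly: passing to cut-norm-convergent representations $W_n$, extracting weak{*} limits $g_i$ of the colour-class indicators via Banach--Alaoglu, invoking Corollary~\ref{cor:independentSetsConvergence} to see that each $\mathrm{supp}(g_i)$ is independent in $W$, and using $\sum_i g_i=1$ a.e.\ (the weak{*} limit of the constant $1$) to conclude that the supports cover $\Omega$. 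Your caution about the na\"ive ``blow up a colouring of one sample'' approach is well placed.

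A pleasant feature of your argument is that it uses only machinery already developed in the present paper (Lemma~\ref{lem:lemmaIndependentSets}/Corollary~\ref{cor:independentSetsConvergence} and Proposition~\ref{prop:chifiniteVSgraphon}), so adopting it would remove the external dependence on \cite{DoHl:Polytons} for this statement.
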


\begin{proof}[Proof of Theorem~\ref{thm:chromaticLimit}(a)]
Suppose that $\chi(W)\ge k$. By (the more difficult part of) Proposition~\ref{prop:k-partite},
there exists a graph $F$ with chromatic number at least $k$ for
which $t(F,W)$ is positive, say $a:=t(F,W)$. Since convergence in
the cut-distance implies convergence of subgraph densities, we conclude
that each graph $H_{n}$ has subgraph density of $F$ equal to $a+o_{n}(1)$.
In particular, for $n$ sufficiently large, this density is positive.
We conclude that the chromatic number of such a graph $H_{n}$ is
at least as that of $F$.
\end{proof}
\begin{proof}[Proof of Theorem~\ref{thm:chromaticLimit}(b)]
Since the graphs $G_{n}$ converge to $W$ in the cut-distance almost
surely, we have $\chi(W)\le\liminf_{n\rightarrow\infty}\chi(G_{n})$
by~(a). On the other hand, if $\chi(W)\le k$, then by (the easy
part of) Proposition~\ref{prop:k-partite}, each graph of chromatic
number at least $k+1$ has zero density. That is, the probability
of $W$-sampling any such graph is zero.
\end{proof}

\subsection{Fractional colorings}

The next definition is a straightforward counterpart to the graph
definition of fractional colorings.
\begin{defn}
Suppose that $W:\Omega\times\Omega\rightarrow[0,1]$ is a graphon.
We say that a function $c:\mathcal{I}(W)\rightarrow[0,1]$ is a \emph{fractional
coloring} of $W$ if for almost every $x\in\Omega$ we have $\sum_{I\in\mathcal{I}_{x}(W)}c(I)\ge1$.
The \emph{fractional chromatic number} of $W$ is defined as

\begin{equation}
\chi_{\mathrm{frac}}(W)=\inf\sum_{I\in\mathcal{I}(W)}c(I),\label{eq:deffraccolor}
\end{equation}
where the infimum is taken over the set of all fractional colorings
of $W$.
\end{defn}

Notice that whenever $\sum_{I\in\mathcal{I}(W)}c(I)$ is finite, then
the support of the function $c$ is at most countable. Thus it suffices
to take the infimum over the set of all fractional colorings of $W$
that are not zero at most in a countable subset of $\mathcal{I}(W)$.

The next proposition shows that the definitions of fractional chromatic
number for graphs and graphons are consistent.
\begin{prop}
\label{prop:chiFRACfiniteVSgraphon}Suppose $G$ is a finite graph
and let $W$ be its graphon representation. Then $\chi_{\mathrm{frac}}(G)=\chi_{\mathrm{frac}}(W)$.
\end{prop}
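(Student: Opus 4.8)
The plan is to prove the two inequalities $\chi_{\mathrm{frac}}(W) \le \chi_{\mathrm{frac}}(G)$ and $\chi_{\mathrm{frac}}(W) \ge \chi_{\mathrm{frac}}(G)$ separately, reusing the strategy from the proof of Proposition~\ref{prop:chifiniteVSgraphon}. Fix the partition $\Omega = \Omega_1 \sqcup \dots \sqcup \Omega_n$ witnessing that $W$ is a graphon representation of $G = (V,E)$, where $V = [n]$ and each $\Omega_v$ has measure $1/n$. For the inequality $\chi_{\mathrm{frac}}(W) \le \chi_{\mathrm{frac}}(G)$: given any fractional coloring $c : \mathcal{I}(G) \to [0,1]$ of $G$, I push it forward by sending each independent set $U \in \mathcal{I}(G)$ to the set $\widehat{U} := \bigcup_{v \in U} \Omega_v$, which is an independent set of $W$ by the defining property of the representation, and setting $\widehat{c}(\widehat{U}) := c(U)$ (and $\widehat{c} = 0$ elsewhere). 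For almost every $x \in \Omega$, $x$ lies in a unique $\Omega_v$, and $\{I \in \mathcal{I}_x(W) : \widehat{c}(I) > 0\}$ corresponds exactly to $\{U \in \mathcal{I}(G) : v \in U\}$, so the covering condition $\sum_{I \in \mathcal{I}_x(W)} \widehat{c}(I) \ge 1$ follows from the corresponding condition for $c$ at the vertex $v$. Since $\sum_I \widehat{c}(I) = \sum_U c(U)$, taking the infimum over all fractional colorings $c$ of $G$ gives the desired bound.

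For the reverse inequality $\chi_{\mathrm{frac}}(W) \ge \chi_{\mathrm{frac}}(G)$, the argument is the more delicate direction and is where I expect the main obstacle to lie. Given a fractional coloring $c$ of $W$ with $\sum_{I \in \mathcal{I}(W)} c(I) < \chi_{\mathrm{frac}}(W) + \varepsilon$ (we may assume the sum is finite, hence $c$ has countable support $\{I_1, I_2, \dots\}$), I want to manufacture a fractional coloring of $G$ of no larger total weight. The natural idea is: each $I_j \in \mathcal{I}(W)$ induces a subset $S_j \subseteq V(G)$ of vertices $v$ for which $\nu(I_j \cap \Omega_v) > 0$; as in the proof of Proposition~\ref{prop:chifiniteVSgraphon}, $S_j$ is an independent set of $G$. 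However, one cannot simply set $c'(S) = \sum_{j : S_j = S} c(I_j)$, because a point $x \in \Omega_v$ need not lie in $I_j$ for every $j$ with $v \in S_j$ — it may miss some of these independent sets — so the covering inequality could fail at the vertex $v$.

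To fix this, I would use an averaging / measure-theoretic selection argument. For each vertex $v \in V(G)$, the covering condition $\sum_{j : x \in I_j} c(I_j) \ge 1$ holds for almost every $x \in \Omega_v$, so integrating over $\Omega_v$ gives $\sum_j c(I_j) \cdot n\,\nu(I_j \cap \Omega_v) \ge 1$, i.e. $\sum_j c(I_j)\, \mu_v(I_j) \ge 1$ where $\mu_v(I_j) := \nu(I_j \cap \Omega_v)/\nu(\Omega_v) \in [0,1]$. The point is now to split the weight $c(I_j)$ of each $I_j$ among the various maximal independent sets of $G$ that contain $S_j$ (or more carefully, to produce from the "fractional membership" data $\mu_v(I_j)$ an honest fractional coloring of $G$). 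A clean way: enlarge each $S_j$ to a maximal independent set is not quite enough; instead, observe that the vector $(\mu_v(I_j))_{v}$ is a point in the independent set polytope $\mathrm{IND}(G)$ (its support is the independent set $S_j$, and it is dominated by $\mathbf{1}_{S_j}$, which for the hereditary family of independent sets lies in $\mathrm{IND}(G)$), hence is a convex combination $\sum_{U \in \mathcal{I}(G)} \lambda_{j,U} \mathbf{1}_U$ of indicator vectors of independent sets of $G$ with $\sum_U \lambda_{j,U} = $ (something $\le 1$; one should check the coordinates sum correctly, or pass to $\mathrm{IND}(G)$ scaled appropriately). Then defining $c'(U) := \sum_j c(I_j)\, \lambda_{j,U}$ yields, for each $v$, $\sum_{U \ni v} c'(U) = \sum_j c(I_j) \sum_{U \ni v} \lambda_{j,U} = \sum_j c(I_j)\, \mu_v(I_j) \ge 1$, so $c'$ is a fractional coloring of $G$, while $\sum_U c'(U) = \sum_j c(I_j) \sum_U \lambda_{j,U} \le \sum_j c(I_j) < \chi_{\mathrm{frac}}(W) + \varepsilon$. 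Letting $\varepsilon \to 0$ completes the proof. The step requiring the most care is justifying that the membership vector $(\mu_v(I_j))_v$ really decomposes as a convex combination of independent-set indicators with coefficient sum at most $1$ — this is exactly the "easy" (polytope-membership) direction of the finite-graph Proposition~\ref{prop:finiteIND} combined with the observation that $\mathbf{1}_{S_j} \in \mathrm{IND}(G)$ since $S_j$ is independent — and keeping the normalization constants straight throughout.
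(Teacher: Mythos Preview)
Your argument is correct; the first inequality matches the paper verbatim, but for $\chi_{\mathrm{frac}}(G)\le\chi_{\mathrm{frac}}(W)$ you take a genuinely different route. The paper simply selects, for each $v$, a generic point $x_v\in\Omega_v$ at which the covering inequality holds (and which, for each $I$ in the countable support of $c_W$, lies in $I$ only when $\nu(I\cap\Omega_v)>0$), and sets $c_G(J):=\sum\{c_W(I):\{v:x_v\in I\}=J\}$; this gives $\sum_{J\ni v}c_G(J)=\sum_{I\ni x_v}c_W(I)\ge1$ directly, with no polytope machinery. Your averaging-then-decomposing approach also works, and the step you flag as delicate is in fact immediate: since $S_j$ is independent and independent sets are closed under taking subsets, every vertex of the cube $[0,1]^{S_j}\times\{0\}^{V\setminus S_j}$ is the indicator of an independent set, so your vector $(\mu_v(I_j))_v$ is a convex combination of such indicators with coefficients summing to exactly~$1$ (pad with $\mathbf{1}_\emptyset$ if needed); you do not need Proposition~\ref{prop:finiteIND} at all. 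The paper's route is shorter; yours trades the generic-point selection for an explicit polytope decomposition. One cosmetic point common to both: the resulting weight function may exceed~$1$ on some independent sets, but capping at~$1$ preserves the covering condition and only decreases the total weight.
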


\begin{proof}
Denote by $\mathcal{I}(W)$ and $\mathcal{I}(G)$ the set of independent
sets of $W$ and $G$, respectively. There is a map $\pi:\mathcal{I}(G)\rightarrow\mathcal{I}(W)$
which maps independent sets in $\mathcal{I}(G)$ to corresponding
independent sets in $\mathcal{I}(W)$. If $c_{G}$ is an arbitrary
fractional coloring of $G$ then we can define $c_{W}:\pi\left(\mathcal{I}(G)\right)\rightarrow[0,1]$,
$c_{W}\left(\pi(I)\right):=c_{G}(I)$. Extending $c_{W}$ by zeros
to $\mathcal{I}(W)$, we get a fractional coloring of $W$ of the
same size. This implies $\chi_{\mathrm{frac}}(G)\geq\chi_{\mathrm{frac}}(W)$. 

Now consider a fractional coloring $c_{W}:\mathcal{I}(W)\rightarrow[0,1]$
of $W$. Let $\Omega=\Omega_{1}\sqcup\dots\sqcup\Omega_{n}$ be the
partition for $W$ corresponding to the vertices of $G$. For each
$v\in V(G)$, choose a point $x_{v}\in\Omega_{v}$. We shall assume
that $\sum_{I\in\mathcal{I}_{x_{v}}(W)}c_{W}(I)\geq1$ for each $v\in V(G)$;
this is true for almost all choices of $x_{v}\in\Omega_{v}$. Now
define $c_{G}:\mathcal{I}(G)\rightarrow[0,1]$ by
\[
c_{G}(J):=\sum_{I}c_{W}(I)\;,
\]
where $I$ ranges over all sets in $\bigcap_{v\in J}\mathcal{I}_{x_{v}}(W)\setminus\bigcup_{v\in V(G)\setminus J}\mathcal{I}_{x_{v}}(W)$.
The inclusion-exclusion formula implies that for each $v\in V(G)$
we have $\sum_{J\in\mathcal{I}_{v}(G)}c_{G}(J)=\sum_{I\in\mathcal{I}_{x_{v}}(W)}c_{W}(I)$.
Since $\sum_{I\in\mathcal{I}_{x_{v}}(W)}c_{W}(I)\geq1$, for each
$v\in V(G)$ we have$\sum_{J\in\mathcal{I}_{v}(G)}c_{G}(J)\geq1$.
Therefore, $c_{G}$ is a valid fractional coloring of $G$. Thus $\chi_{\mathrm{frac}}(G)\leq\chi_{\mathrm{frac}}(W)$,
which finishes the proof.
\end{proof}
\begin{rem}
\label{rem:fractcliquesizedoesnotmatter}Observe that in the proof
of Proposition~\ref{prop:chiFRACfiniteVSgraphon} we did not use
that each vertex of $G$ is represented by a set of $\Omega$ of measure
$\frac{1}{|V(G)|}$. That is, the same result holds true if vertices
of $G$ are represented by arbitrary sets $\Omega_{1}\sqcup\dots\sqcup\Omega_{n}=\Omega$
of positive but not necessarily the same measure.
\end{rem}

\begin{example}
\label{ex:leader}In~\cite{MR1358531} Leader constructs a graph
$R$ with a countable vertex set, say $\mathbb{N}$, whose fractional
chromatic number is strictly greater than the supremum of the fractional
chromatic numbers of its finite subgraphs. Considering an arbitrary
partition $\Omega=\bigsqcup_{i\in\mathbb{N}}\Omega_{i}$ into sets
of positive measure, and taking $W$ to be a graphon representation
of $R$ with respct to the partition $\Omega=\bigsqcup_{i\in\mathbb{N}}\Omega_{i}$
, we get by a version\footnote{For countable graphs; the proof of such a version is \emph{mutatis
mutandis}.} of Proposition~\ref{prop:chiFRACfiniteVSgraphon} (see also Remark~\ref{rem:fractcliquesizedoesnotmatter})
that $\chi_{\mathrm{frac}}(W)>\sup\left\{ \chi_{\mathrm{frac}}(G):t(G,W)>0\right\} $.
Note that this example also shows that $\chi_{\mathrm{frac}}(\cdot)$
is not lower semicontinuous. Indeed, for $\ell=1,2,3,\ldots$, we
consider graphon $W_{\ell}$ which is equal to $W$ on $\left(\cup_{i=1}^{\ell}\Omega_{i}\right)\times\left(\cup_{i=1}^{\ell}\Omega_{i}\right)$
and~0 otherwise. Obviously, the graphons $\left(W_{n}\right)_{n}$
converge to $W$ in the cut-norm (actually, even pointwise), but we
have $\lim_{n}\chi_{\mathrm{frac}}(W_{n})=\sup\left\{ \chi_{\mathrm{frac}}(G):t(G,W)>0\right\} <\chi_{\mathrm{frac}}(W)$.
\end{example}

We also note that in~\cite{MR1358531} Leader constructs another
countable graph $T$ in which $\chi_{\mathrm{frac}}(T)$ is not attained
by any fractional coloring. Taking a graphon representation of $T$,
we see that the fractional chromatic number of a graphon need not
be attained.

\subsection{$b$-fold colorings}

Next, we define counterparts of $b$-fold colorings for graphons.
\begin{defn}
Suppose that $b\in\mathbb{N}$. For a graphon $W:\Omega\times\Omega\rightarrow[0,1]$,
a measurable map $p:\Omega\rightarrow{[k] \choose b}$ is a \emph{$b$-fold
coloring} of $W$ with $k$ colors if for almost all pairs $(x,y)\in\Omega\times\Omega$
we have $W(x,y)=0$ or $p(x)\cap p(y)=\emptyset$. The \emph{$b$-fold
chromatic number} $\chi_{b}(W)$ is the smallest number of colors
necessary to construct a proper $b$-fold coloring of $W$.
\end{defn}

As in the graph case, the relation between the fractional chromatic
number, the $1$-fold chromatic number and the ordinary chromatic
number is as follows: $\chi_{\mathrm{frac}}(W)\leq\chi(W)=\chi_{1}(W)$.
The next theorem shows that at least qualitatively, we can reverse
the inequality.
\begin{prop}
Let $W:\Omega\times\Omega\rightarrow[0,1]$ be a graphon. If $\chi_{\mathrm{frac}}(W)<\infty$
then $\chi(W)<\infty$.
\end{prop}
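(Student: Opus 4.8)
The plan is to prove the statement directly, without going through Proposition~\ref{prop:k-partite}: I will show that $\chi_{\mathrm{frac}}(W)<\infty$ forces $\Omega$ to be covered, modulo a nullset, by \emph{finitely many} independent sets, which immediately produces a proper coloring with finitely many colors. The key point is that the finiteness of the objective $\sum_{I}c(I)$ in the definition of $\chi_{\mathrm{frac}}$ is much stronger than the mere existence of a fractional coloring: it forces the support of $c$ to be countable and the tail of the corresponding series of weights to be negligible, and a negligible tail can simply be discarded.

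Here are the steps. First, since $\chi_{\mathrm{frac}}(W)<\infty$, fix a fractional coloring $c:\mathcal{I}(W)\to[0,1]$ with $\sum_{I\in\mathcal{I}(W)}c(I)<\infty$. As already observed in the text, the support of $c$ is at most countable; enumerate it as $I_1,I_2,\ldots$ and put $c_m:=c(I_m)$, so $\sum_{m}c_m<\infty$. Second, choose $M\in\mathbb{N}$ with $\sum_{m>M}c_m<1$. Third, invoke the defining inequality of a fractional coloring: for almost every $x\in\Omega$ one has $\sum_{m\,:\,x\in I_m}c_m\ge 1$, whence $\sum_{m\le M,\ x\in I_m}c_m\ge 1-\sum_{m>M}c_m>0$; in particular such an $x$ lies in at least one of $I_1,\ldots,I_M$. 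Therefore $\Omega=\bigcup_{m=1}^{M}I_m$ modulo a nullset. Fourth, disjointify: the map $f(x):=\min\{m\le M:\ x\in I_m\}$, defined for almost every $x$ and extended arbitrarily on the exceptional nullset, is a measurable function $\Omega\to[M]$ each of whose classes $f^{-1}(m)\subseteq I_m$ is an independent set; hence $f$ is a proper coloring of $W$ and $\chi(W)\le M<\infty$.

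I do not expect a genuine obstacle: the entire content is the third step, namely that summability of the weights lets one truncate to finitely many color classes. It is perhaps worth noting why a more ``structural'' attempt fails, so as not to be tempted by it: one cannot combine $\chi_{\mathrm{frac}}(W)<\infty$ with Proposition~\ref{prop:k-partite} by bounding the fractional chromatic numbers of the finite graphs appearing in $W$, because a small fractional chromatic number does not bound the ordinary chromatic number at all (the Kneser graphs $KG_{3k,k}$ have $\chi_{\mathrm{frac}}=3$ but $\chi=k+2\to\infty$). Thus finite $\chi_{\mathrm{frac}}(W)$ is compatible with arbitrarily large chromatic numbers of finite subgraphs, and it is exactly the countability/summability built into the definition of $\chi_{\mathrm{frac}}$ --- not any bound on subgraph parameters --- that rules out $\chi(W)=\infty$. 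The only routine verifications are the measurability of $f$ (immediate, since $f^{-1}(m)=I_m\setminus(I_1\cup\cdots\cup I_{m-1})$) and the usual bookkeeping over the exceptional nullsets of the countably many sets $I_m$.
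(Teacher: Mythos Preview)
Your proof is correct and essentially identical to the paper's: both pick a fractional coloring with finite total weight, truncate to a finite subfamily whose complementary tail has weight less than $1$, and observe that almost every point must already lie in one of the finitely many retained independent sets. The extra disjointification step and the Kneser-graph aside are pleasant additions, but the core argument is the same.
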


\begin{proof}
Suppose that $c$ is a fractional coloring of $W$ with $\sum_{I\in\mathcal{I}(W)}c(I)<\infty$.
Thus, there is a finite number of independent sets $\left\{ I_{1},\dots,I_{k}\right\} $
such that
\begin{equation}
\sum_{i=1}^{k}c(I_{i})>\sum_{I\in\mathcal{I}(W)}c(I)-1.\label{eq:contradict1}
\end{equation}
 In particular, for each $x\in\Omega$ we have
\[
\sum_{I\in\mathcal{I}_{x}(W)}c(I)<\sum_{i\in[k],x\in I_{i}}c(I_{i})+1\;.
\]
This implies that if $x\in\Omega\setminus\bigcup_{i=1}^{k}I_{i}$
then $\sum_{I\in\mathcal{I}_{x}(W)}c(I)<0+1$. We conclude that $\Omega\setminus\bigcup_{i=1}^{k}I_{i}$
is a nullset. Therefore, $\left\{ I_{1},\dots,I_{k}\right\} $ corresponds
to a proper coloring of $W$with finitely many colors.
\end{proof}
A sequence $\left\{ a_{n}\right\} _{n=1}^{\infty}$ is said to be
\emph{subadditive} if for every $m$ and $n$ we have $a_{m+n}\leq a_{m}+a_{n}$.
Fekete's Lemma is a useful result concerning subadditive sequences.
\begin{lem}[Fekete's Lemma]
For every subadditive sequence ${\displaystyle \left\{ a_{n}\right\} _{n=1}^{\infty}}$,
the limit ${\displaystyle {\displaystyle \lim_{n\to\infty}\frac{a_{n}}{n}}}$
exists and is equal to $\inf\frac{a_{n}}{n}$.
\end{lem}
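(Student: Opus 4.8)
The plan is to set $L := \inf_{n \ge 1} \frac{a_n}{n}$, which is an element of $[-\infty,\infty)$ since the $a_n$ are real, and to prove $\frac{a_n}{n} \to L$ by squeezing. One half is free: by definition of the infimum, $\frac{a_n}{n} \ge L$ for every $n$, so $\liminf_n \frac{a_n}{n} \ge L$. Hence the entire content is the matching upper bound $\limsup_n \frac{a_n}{n} \le L$.

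To obtain that upper bound I would fix an arbitrary real number $M > L$ (if $L = -\infty$ I will at the end let $M \to -\infty$, otherwise $M \downarrow L$). By definition of the infimum there is a \emph{fixed} $k \in \mathbb{N}$ with $\frac{a_k}{k} < M$. Then for every $n$ I use division with remainder, writing $n = qk + r$ with $0 \le r < k$, and iterate subadditivity to get $a_{qk} \le q\,a_k$ and therefore $a_n \le q\,a_k + a_r$ (reading the $r=0$ case as $a_n \le q\,a_k$, or adopting the convention $a_0 = 0$, which is consistent with subadditivity). Dividing by $n$,
\[
\frac{a_n}{n} \;\le\; \frac{qk}{n}\cdot\frac{a_k}{k} \;+\; \frac{a_r}{n}\,.
\]

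The only point needing any care --- and the only thing I would flag as an obstacle, a very mild one --- is the asymptotics of the right-hand side as $n \to \infty$ with $k$ held fixed: there $q = q(n) \to \infty$, so $\frac{qk}{n} = 1 - \frac{r}{n} \to 1$, while $r$ takes only the finitely many values $0,1,\dots,k-1$, so $\max_{0 \le r < k}|a_r|$ is a finite constant and $\frac{a_r}{n} \to 0$. This yields $\limsup_n \frac{a_n}{n} \le \frac{a_k}{k} < M$. Since $M$ was an arbitrary real exceeding $L$, letting $M$ tend to $L$ gives $\limsup_n \frac{a_n}{n} \le L$, and together with the trivial lower bound we conclude that $\lim_n \frac{a_n}{n}$ exists and equals $\inf_n \frac{a_n}{n}$.
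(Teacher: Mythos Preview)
Your proof is correct and is the standard argument for Fekete's Lemma. Note, however, that the paper does not actually prove this statement: it merely quotes Fekete's Lemma as a classical fact and then applies it to the sequence $\chi_b(W)$, so there is no ``paper's own proof'' to compare against. Your write-up would serve perfectly well as a self-contained justification in its place.
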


If we have an $a$-fold coloring $p:\Omega\rightarrow{\{1,\ldots,k\} \choose a}$
of a graphon $W$ which uses the palette $\{1,\ldots,k\}$ and a $b$-fold
coloring $q:\Omega\rightarrow{\{k+1,\ldots,k+\ell\} \choose b}$ which
uses the palette $\{k+1,\ldots,k+\ell\}$, we observe that the map
$x\mapsto p(x)\cup q(x)$ is an $(a+b)$-fold coloring of $W$ which
uses the palette $\{1,\ldots,k+\ell\}$. Therefore, we have $\chi_{a+b}(W)\leq\chi_{a}(W)+\chi_{b}(W)$.
Fekete's lemma tells us that 
\[
\lim_{b\rightarrow\infty}\frac{\chi_{b}(W)}{b}=\inf\frac{\chi_{b}(W)}{b}.
\]
The next theorem tells us that this limit is equal to the fractional
chromatic number of $W$.
\begin{thm}
\label{thm:limb-fold}For every graphon $W$ we have
\[
\chi_{\mathrm{frac}}(W)=\lim_{b\rightarrow\infty}\frac{\chi_{b}(W)}{b}=\inf_{b\in\mathbb{N}}\frac{\chi_{b}(W)}{b}.
\]
\end{thm}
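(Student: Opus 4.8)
The plan is to prove the two inequalities $\chi_{\mathrm{frac}}(W) \le \inf_b \frac{\chi_b(W)}{b}$ and $\chi_{\mathrm{frac}}(W) \ge \lim_b \frac{\chi_b(W)}{b}$ separately, with the Fekete-limit identity already in hand from the discussion preceding the statement. For the first inequality, I would start from a $b$-fold coloring $p \colon \Omega \to \binom{[k]}{b}$ of $W$ with $k = \chi_b(W)$ colors. For each color $i \in [k]$, set $I_i := p^{-1}(\{S : i \in S\}) = \{x : i \in p(x)\}$. The defining property of a $b$-fold coloring says that whenever $W(x,y) > 0$ for a.e.\ pair, $p(x) \cap p(y) = \emptyset$, hence no color $i$ lies in both $p(x)$ and $p(y)$; thus each $I_i$ is an independent set of $W$ (up to a nullset). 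Define $c \colon \mathcal{I}(W) \to [0,1]$ by $c(I_i) := 1/b$ for each $i$ (summing multiplicities if some $I_i$ coincide) and $c \equiv 0$ elsewhere. Since every $x$ has $|p(x)| = b$, it belongs to exactly $b$ of the sets $I_i$, so $\sum_{I \in \mathcal{I}_x(W)} c(I) \ge b \cdot \frac 1b = 1$ for a.e.\ $x$; hence $c$ is a fractional coloring of total weight $k/b = \chi_b(W)/b$. Taking the infimum over $b$ gives $\chi_{\mathrm{frac}}(W) \le \inf_b \chi_b(W)/b$.

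For the reverse inequality, fix $\varepsilon > 0$ and take a fractional coloring $c$ of $W$ with $\sum_{I} c(I) < \chi_{\mathrm{frac}}(W) + \varepsilon$; if $\chi_{\mathrm{frac}}(W) = \infty$ there is nothing to prove, so this sum is finite and (as noted after the definition of $\chi_{\mathrm{frac}}$) $c$ is supported on a countable family $I_1, I_2, \ldots$ with weights $c_1, c_2, \ldots$. I would first truncate to a finite subfamily $I_1, \ldots, I_m$ capturing all but a small tail, and then perturb the weights to rationals $c_j' = p_j / N$ (common denominator $N$) so that the collection still covers almost every point with total multiplicity at least $1$; the slack in the covering condition ($\sum_{I \in \mathcal{I}_x} c(I) \ge 1$ with the weights being "soft") is what lets us absorb the rounding errors, at the cost of replacing $1$ by $1 - o(1)$ and rescaling. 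Concretely, after clearing denominators we want integers $b$ and $k$ and, for a.e.\ $x$, a guarantee that $x$ is covered by sets of total integer multiplicity $\ge b$ among a multiset of $k$ independent sets; assigning to $x$ the color-set $p(x) \in \binom{[k]}{b}$ formed by picking $b$ of the color-slots whose independent set contains $x$ yields a $b$-fold coloring with $k$ colors, whence $\chi_b(W)/b \le k/b \le \chi_{\mathrm{frac}}(W) + O(\varepsilon)$. Letting $\varepsilon \to 0$ finishes the bound.

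The main obstacle is the measurability of the map $x \mapsto p(x)$ in the second part: once the weights are rational and the sets $I_1,\ldots,I_m$ are fixed, the sets $\{x : x \in I_{j_1}, \ldots, x \in I_{j_b}, x \notin I_j \text{ for } j \notin \{j_1,\ldots,j_b\}\}$ partition (a full-measure subset of) $\Omega$ into finitely many measurable pieces, and on each piece one assigns a \emph{fixed} element of $\binom{[k]}{b}$ (a canonical choice of $b$ color-slots from the independent sets containing that piece); this makes $p$ a measurable simple function, so no measurable-selection machinery is needed. A secondary point requiring care is that the covering inequality $\sum_{I \in \mathcal{I}_x(W)} c(I) \ge 1$ only holds for \emph{almost} every $x$ and involves the delicate collections $\mathcal{I}_x(W)$; but since we restrict to the finitely many fixed sets $I_1,\ldots,I_m$, membership $x \in I_j$ is an honest measurable condition, and the exceptional nullset is harmless because $b$-fold colorings of graphons are also only required to satisfy their constraint for a.e.\ pair. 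I expect the rounding bookkeeping to be the most tedious step, but conceptually routine; the measurability observation is the one worth stating explicitly.
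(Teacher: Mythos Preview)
Your proposal is correct and follows essentially the same route as the paper: for one inequality, convert a $b$-fold coloring into a fractional coloring by putting weight $1/b$ on each color class; for the other, truncate a near-optimal fractional coloring to finite support, rescale to restore the covering condition, round the weights to multiples of $1/b$, and read off a $b$-fold coloring by assigning to each $x$ the union of the color-blocks of the independent sets containing it. The paper rounds \emph{up} (so no further rescaling is needed after rounding) and, like you, allows $|p(x)|\ge b$ before trimming; your explicit remark on the measurability of $p$ is a point the paper leaves implicit.
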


\begin{proof}
First, we prove that $\chi_{\mathrm{frac}}(W)\leq\inf_{b\rightarrow\infty}\frac{\chi_{b}(W)}{b}$.
Given $b\in\mathbb{N}$, fix a proper $b$-fold coloring $c_{b}$
of $\Omega$ for the $b$-chromatic number $\chi_{b}$, say using
$\ell$ colors. We will construct a fractional coloring $c$ such
that 
\[
\sum_{I\in\mathcal{I}(W)}c(I)=\frac{\chi_{b}(W)}{b}.
\]
To this end, for each $j\in[\ell]$, consider the independent set
$I_{j}:=\left\{ x\in\Omega:j\in c_{b}(x)\right\} $, and for each
such set, define $c(I_{j})=1/b$. We have, 
\[
\sum_{I\in\mathcal{I}_{x}(W)}c(I)=1,
\]
for almost all $x\in\Omega$, and 
\[
\sum_{I\in\mathcal{I}(W)}c(I)=\frac{\chi_{b}}{b},
\]
as required. 

It remains to prove that $\chi_{\mathrm{frac}}(W)\ge\inf_{b\rightarrow\infty}\frac{\chi_{b}(W)}{b}$.
We can assume $\chi_{\mathrm{frac}}(W)$ is bounded. The next claim
allows us to restrict ourselves to fractional colorings with weights
that are a multiple of $\frac{1}{b}$.
\begin{claim*}
Suppose that $\epsilon>0$ is fixed. For any $b$ sufficiently large,
there exists a fractional coloring $c:\mathcal{I}(W)\rightarrow\{\frac{j}{b}:j=0,1,\ldots,b\}$
with finite support such that $\sum_{I\in\mathcal{I}(W)}c(I)\le\chi_{\mathrm{frac}}(W)+\epsilon$.
\end{claim*}
\begin{proof}[Proof of Claim]
Take $\delta>0$ such that 
\[
\frac{\chi_{\mathrm{frac}}(W)+\delta}{1-\delta}\leq\chi_{\mathrm{frac}}(W)+\frac{\epsilon}{2}.
\]
Take a fractional coloring $c_{0}$ of $W$ such that $\sum_{I\in\mathcal{I}(W)}c_{0}(I)\leq\chi_{\mathrm{frac}}(W)+\delta.$
Since $\sum_{I\in\mathcal{I}(W)}c_{0}(I)$ is finite, we can find
a finite subset $\mathcal{I}_{0}\mathcal{\subseteq I}(W)$ such that
\begin{equation}
\sum_{I\in\mathcal{I}(W)\setminus\mathcal{I}_{0}}c_{0}(I)<\delta\;.\label{eq:needtomultiply}
\end{equation}
Define a function $c_{1}$ by setting it equal to $c_{0}$ if $I\mathcal{\subseteq I}_{0}$
and 0 otherwise. The function $c_{1}$ is not necessarily a valid
fractional coloring, but we can turn it into a fractional coloring
$c_{2}$, $c_{2}(I):=\frac{c_{1}(I)}{1-\delta}$. The fact that $c_{2}$
is a valid fractional coloring follows from~(\ref{eq:needtomultiply}).
We have 
\[
\sum_{I\in\mathcal{I}(W)}c_{2}(I)=\frac{\sum_{I\in\mathcal{I}(W)}c_{1}(I)}{1-\delta}\leq\frac{\chi_{\mathrm{frac}}(W)+\delta}{1-\delta}\leq\chi_{\mathrm{frac}}(W)+\frac{\epsilon}{2},
\]
by our choice of $\delta$.

Let $b>\frac{2\cdot\left|\mathcal{I}_{0}\right|}{\epsilon}$. We can
define $c$ by rounding up $c_{2}(I)$ to the closest multiple of
$1/b$. This way, $c$ is still a proper coloring, where we have increased
the total sum of weight on independent sets by at most $\frac{\left|\mathcal{I}_{0}\right|}{b}$.
Thus, we obtain 
\[
\sum_{I\in\mathcal{I}(W)}c(I)\leq\sum_{I\in\mathcal{I}(W)}c_{2}(I)+\frac{\left|\mathcal{I}_{0}\right|}{b}\leq\sum_{I\in\mathcal{I}(W)}c_{2}(I)+\frac{\epsilon}{2}\leq\chi_{\mathrm{frac}}(W)+\epsilon.
\]
Now, $c$ is a rational fractional coloring with common denominator
$b$, as required. This finishes the proof of the Claim.
\end{proof}
Let us fix $\epsilon>0$. For each $b$ sufficiently large, we describe
how to transform a fractional coloring $c:\mathcal{I}(W)\rightarrow\{\frac{j}{b}:j=0,1,\ldots,b\}$
from the Claim (for the given $\epsilon$) into a $b$-fold coloring
$p:\Omega\rightarrow{[K] \choose b}$, where $K=b\sum_{I\in\mathcal{I}(W)}c(I)$
(recall that $K$ is an integer). Actually, it is slightly easier
to allow $p$ to map to subsets of $[K]$ of cardinality \emph{at
least} $b$. Any such $p$ can be obviously corrected eventually.
Let us partition $[K]$ into sets $\bigsqcup_{I\in\mathcal{I}(W)}A_{I}$
so that the size of each set $A_{I}$ is $b\cdot c(I)$. Then for
each $x\in\Omega$ we define $p(x):=\bigcup_{I\in\mathcal{I}_{x}(W)}A_{I}$.
The map $p$ is obviously a proper $b$-fold coloring with $\sum_{I\in\mathcal{I}(W)}c(I)+\epsilon\ge\frac{\chi_{b}(W)}{b}$,
as was needed.
\end{proof}

\section{Cliques and fractional cliques\label{sec:Cliques}}

Below, we define the clique number and the fractional clique number
of a graphon.

\subsection{Cliques}

We first introduce the clique number of a graphon.
\begin{defn}
\label{def:cliquenumber}For a graphon $W$, the \emph{clique number}
$\omega(W)$ is defined as 
\[
\omega(W)=\max\left\{ r:t(K_{r},W)>0\right\} .
\]
\end{defn}

Lower semicontinuity of the clique number follows immediately from
the definition. (As mentioned in Section~\ref{subsec:OurContribution},
the sequence $Y_{n}\equiv\frac{1}{n}\rightarrow Y\equiv0$ shows that
we do not have upper semicontinuity in general.)
\begin{prop}
\label{prop:cliquesemicont}For every sequence of graphons $\left(W_{n}\right)_{n}$
that converges to $W$ in the cut-distance we have $\omega(W)\le\liminf_{n\rightarrow\infty}\omega(W_{n})$.
\end{prop}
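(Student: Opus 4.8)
The plan is to reduce the statement to the fact that subgraph densities are continuous with respect to the cut-distance. Suppose $\omega(W) = r$; by Definition~\ref{def:cliquenumber} this means $t(K_r, W) > 0$, say $t(K_r, W) = a > 0$. Since $(W_n)_n$ converges to $W$ in the cut-distance, and since $t(K_r, \cdot)$ is continuous in the cut-distance (this is a standard fact, e.g.\ the counting lemma in \cite{Lovasz2012}), we have $t(K_r, W_n) = a + o_n(1)$. In particular, for all $n$ sufficiently large, $t(K_r, W_n) > 0$, which by definition gives $\omega(W_n) \ge r = \omega(W)$. Therefore $\liminf_{n\to\infty} \omega(W_n) \ge \omega(W)$, which is exactly the claim.

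One technical point to address is the case $\omega(W) = \infty$, i.e.\ when $t(K_r, W) > 0$ for every $r \in \mathbb{N}$. In that situation the same argument applied to each fixed $r$ shows that $\liminf_n \omega(W_n) \ge r$ for every $r$, hence $\liminf_n \omega(W_n) = \infty = \omega(W)$, so the inequality still holds (in fact with equality on the right-hand side). The argument above does not require passing to a subsequence or extracting weak* limits, in contrast to the treatment of the independence number in Corollary~\ref{cor:indepsemicontinuous}; the asymmetry is precisely because the clique number is defined through a positivity (open) condition on densities rather than through an optimization whose optimum must be shown to be attained.

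I do not anticipate a genuine obstacle here: the proof is essentially immediate from the definition together with continuity of homomorphism densities. The only thing one must be slightly careful about is the direction of the inequality and the role of $o_n(1)$ — positivity of the limit density forces positivity of $t(K_r, W_n)$ only eventually, not for all $n$, which is why the conclusion is a $\liminf$ bound rather than a pointwise one, and which is also why, as noted in the text, the complementary (upper) semicontinuity fails, as witnessed by $Y_n \equiv \tfrac1n \to Y \equiv 0$.
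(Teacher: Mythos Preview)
Your proposal is correct and follows essentially the same approach as the paper's proof: use continuity of $t(K_r,\cdot)$ in the cut-distance to conclude that $t(K_r,W_n)>0$ eventually whenever $t(K_r,W)>0$, hence $\omega(W_n)\ge r$ for all large $n$. The only cosmetic difference is that the paper phrases it via ``if $\omega(W)\ge r$'' (which handles the finite and infinite cases uniformly), whereas you first treat $\omega(W)=r$ and then the case $\omega(W)=\infty$ separately.
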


\begin{proof}
If $\omega(W)\ge r$, then $t(K_{r},W)>0$. Since subgraph densities
are continuous in the cut-distance, we have $t(K_{r},W_{n})>0$ for
almost all $n$. In particular, $\omega(W_{n})\ge r$ for almost all
$n$.
\end{proof}

\subsection{Fractional cliques}

In analogy with the finite counterpart, we make the following definition.
\begin{defn}
Suppose that $W:\Omega\times\Omega\rightarrow[0,1]$ is a graphon.
We say that a measurable function $f:\Omega\rightarrow[0,+\infty)$
is a \emph{fractional clique} if for every $I\in\mathcal{I}(W)$ we
have $\int_{I}f\leq1$. The \emph{size} of a fractional clique $f$
is $\|f\|:=\int f$. We define the\emph{ fractional clique number}
of $W$ as 
\[
\omega_{\mathrm{frac}}(W)=\sup\int_{\Omega}f,
\]
where the supremum is taken over all fractional cliques in $W$.
\end{defn}

In analogy with Proposition~\ref{prop:chiFRACfiniteVSgraphon}, we
first prove that the fractional clique number was introduced in a
way that is consistent with the graph version.
\begin{prop}
\label{prop:f-cliqueRepresentation}Suppose $G$ is a finite graph
and let $W$ be its graphon representation. Then $\omega_{\mathrm{frac}}(G)=\omega_{\mathrm{frac}}(W)$.
\end{prop}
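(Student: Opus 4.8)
The plan is to mimic the proof of Proposition~\ref{prop:chiFRACfiniteVSgraphon}, exploiting the natural correspondence between independent sets of $G$ and independent sets of $W$, but now in the dual direction. Let $\Omega=\Omega_1\sqcup\dots\sqcup\Omega_n$ be the partition realizing the graphon representation $W$ of $G$, with $\nu(\Omega_v)=\tfrac1n$ for each $v\in V(G)$. As in Proposition~\ref{prop:chiFRACfiniteVSgraphon}, there is an injection $\pi:\mathcal{I}(G)\to\mathcal{I}(W)$ sending an independent set $J\subseteq V(G)$ to $\bigcup_{v\in J}\Omega_v$; moreover every maximal independent set of $W$ is of this form modulo a nullset, since $W$ is $\{0,1\}$-valued off the partition blocks.

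For the inequality $\omega_{\mathrm{frac}}(W)\ge\omega_{\mathrm{frac}}(G)$, I would take an optimal fractional clique $g:V(G)\to[0,\infty)$ of $G$ and ``spread it out'': define $f:\Omega\to[0,\infty)$ by $f(x):=n\cdot g(v)$ for $x\in\Omega_v$. Then $\int_\Omega f=\sum_{v}\nu(\Omega_v)\cdot n g(v)=\sum_v g(v)=\omega_{\mathrm{frac}}(G)$. To check that $f$ is a fractional clique of $W$, take $I\in\mathcal{I}(W)$; then $\int_I f=\sum_v n g(v)\,\nu(I\cap\Omega_v)$. The key observation is that the set $J:=\{v:\nu(I\cap\Omega_v)>0\}$ is an independent set of $G$ (two vertices $u,v\in J$ joined by an edge would force $W$ to be $1$ a.e.\ on $\Omega_u\times\Omega_v$, contradicting that $I\cap\Omega_u$ and $I\cap\Omega_v$ both have positive measure inside an independent set). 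Since $\nu(I\cap\Omega_v)\le\nu(\Omega_v)=\tfrac1n$, we get $\int_I f\le\sum_{v\in J}n g(v)\tfrac1n=\sum_{v\in J}g(v)\le1$ by~\eqref{eq:fraccliqueFinte}. Hence $f$ is a fractional clique and $\omega_{\mathrm{frac}}(W)\ge\int_\Omega f=\omega_{\mathrm{frac}}(G)$.

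For the reverse inequality $\omega_{\mathrm{frac}}(W)\le\omega_{\mathrm{frac}}(G)$, I would start from a fractional clique $f:\Omega\to[0,\infty)$ of $W$ with $\int_\Omega f$ close to $\omega_{\mathrm{frac}}(W)$ and ``average'' it: set $g(v):=n\int_{\Omega_v}f=\int_{\Omega_v} f\,\mathrm{d}\nu_{\Omega_v}$. Then $\sum_v g(v)=n\int_\Omega f\cdot\tfrac1n\cdot n$—more carefully, $\sum_v g(v)=n\sum_v\int_{\Omega_v}f=n\int_\Omega f$; to match scales I should instead define $g(v):=\frac{1}{\nu(\Omega_v)}\int_{\Omega_v}f=n\int_{\Omega_v}f$, so that $\sum_v g(v)\cdot\tfrac1n=\int_\Omega f$, i.e.\ $\frac1n\sum_v g(v)=\int_\Omega f$; wait, the clean bookkeeping is: with $g(v):=n\int_{\Omega_v}f$ one has $\sum_v g(v)=n\int_\Omega f$, which is $n$ times too large, so the right normalization mirrors the first half: $g(v):=\int_{\Omega_v}f\big/\nu(\Omega_v)=n\int_{\Omega_v}f$ gives $\sum_v g(v)\nu(\Omega_v)=\int_\Omega f$. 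The point is that $\sum_v g(v)$ relates to $\int_\Omega f$ by the factor $n$, and I will track this constant carefully in the write-up. To verify $g$ is a fractional clique of $G$: for $J\in\mathcal{I}(G)$, the set $\pi(J)=\bigcup_{v\in J}\Omega_v$ is an independent set of $W$, so $1\ge\int_{\pi(J)}f=\sum_{v\in J}\int_{\Omega_v}f=\frac1n\sum_{v\in J}g(v)$, hence $\sum_{v\in J}g(v)\le n$—which again shows I must rescale by the measure of the blocks so the final inequality reads $\sum_{v\in J}g(v)\le1$. Taking instead $g(v):=n\int_{\Omega_v}f$ and noting $\int_{\pi(J)}f\le1$ gives $\sum_{v\in J}g(v)\le n$, i.e.\ $g/n$ is a fractional clique of $G$, so $\omega_{\mathrm{frac}}(G)\ge\frac1n\sum_v g(v)=\int_\Omega f$, which can be made arbitrarily close to $\omega_{\mathrm{frac}}(W)$.

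I expect the main obstacle to be purely bookkeeping of the $\tfrac1n$ normalization factors—ensuring the two maps (spreading and averaging) are set up so that sizes are preserved exactly and the fractional-clique constraints translate correctly. As in Remark~\ref{rem:fractcliquesizedoesnotmatter}, nothing here actually requires the blocks $\Omega_v$ to have equal measure; with blocks of arbitrary positive measure one uses $f(x):=g(v)/\nu(\Omega_v)$ for $x\in\Omega_v$ in one direction and $g(v):=\int_{\Omega_v}f$ in the other, and both sizes equal $\sum_v g(v)=\int_\Omega f$ respectively—this is the cleanest formulation and the one I would present. The only genuinely non-formal point is the observation that a positive-measure intersection of an independent set of $W$ with two blocks $\Omega_u,\Omega_v$ forbids the edge $uv$ in $G$, which is immediate from the definition of graphon representation.
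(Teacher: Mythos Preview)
Your proposal is correct and follows essentially the same approach as the paper: spread a fractional clique of $G$ to $W$ via $f(x):=g(v)/\nu(\Omega_v)$ on $\Omega_v$, and average a fractional clique of $W$ to $G$ via $g(v):=\int_{\Omega_v}f$, checking in each direction that the independence structure transfers (the key observation being that $\{v:\nu(I\cap\Omega_v)>0\}$ is independent in $G$). The bookkeeping tangles in your middle paragraph are harmless---the clean normalization you settle on at the end is exactly what the paper uses.
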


\begin{proof}
Let $\Omega=\Omega_{1}\sqcup\dots\sqcup\Omega_{n}$ be the partition
for $W$ corresponding to the vertices of~$G$. For any fractional
clique $x:V(G)\rightarrow[0,\infty)$ of the graph $G$ define a function
$f:\Omega\rightarrow[0,\infty)$ with constant value $x(v)n$ in the
interval $\Omega_{v}$ for each $v\in V(G)$. There is a map $\pi:\mathcal{I}(G)\rightarrow\mathcal{I}(W)$
which maps independent sets in $\mathcal{I}(G)$ to corresponding
independent sets in $\mathcal{I}(W)$. Let us verify that $f$ is
a fractional clique for $W$. Let $I\in\mathcal{I}(W)$ be arbitrary.
Let $J\subset V(G)$ consist of the vertices $v$ for which $I\cap\Omega_{v}$
has positive measure. Then $J$ is an independent set in $G$. We
then have
\[
\int_{I}f\le\sum_{v\in J}\int_{\Omega_{v}}f=\sum_{v\in J}x(v)\le1\;,
\]
where the last inequality follows uses that $x$ in a fractional clique
in $G$. Obviously, for the size of $f$ we have $\int_{\Omega}f=\sum_{v\in V(G)}x(v)$.
Therefore, $\omega_{\mathrm{frac}}(G)\leq\omega_{\mathrm{frac}}(W)$.

Now suppose $f:\Omega\rightarrow[0,\infty)$ is any fractional clique
of $W$. Define a function $x:V(G)\rightarrow[0,\infty)$, where $x(v)=\int_{\Omega_{v}}f$.
For each independent set of $J\in\mathcal{I}(G)$, we have
\[
\sum_{v\in J}x(v)=\sum_{v\in J}\int_{\Omega_{v}}f=\int_{\pi(J)}f\leq1\;.
\]
Thus, $x$ is a valid fractional clique for $G$ with size 
\[
\sum_{v\in V(G)}x(v)=\sum_{v\in V(G)}\int_{\Omega_{v}}f=\int_{\Omega}f.
\]
That implies $\omega_{\mathrm{frac}}(G)\geq\omega_{\mathrm{frac}}(W_{G})$,
and finishes the proof.
\end{proof}
We now prove lower semicontinuity of the fractional clique number.
\begin{thm}
\label{thm:liminfFracClique}Let $\left(W_{n}\right)_{n}$ be a sequence
of graphons converging to $W$ in the cut-norm. Then

\[
\omega_{\mathrm{frac}}(W)\leq\liminf_{n\rightarrow\infty}\omega_{\mathrm{frac}}(W_{n})\;.
\]
\end{thm}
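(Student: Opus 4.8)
The plan is to deduce the theorem from a \emph{stability} statement for fractional cliques of $W$: \emph{for every bounded fractional clique $f$ of $W$ and every $\eta>0$ there is $\delta>0$ such that every measurable $I\subseteq\Omega$ with $\int_{I\times I}W\le\delta$ satisfies $\int_I f\le 1+\eta$.} First I would record two easy reductions. On the one hand, truncating a fractional clique at a large level $T$ (replacing $f_0$ by $f_0\wedge T$) preserves the fractional-clique property, and by monotone convergence $\int(f_0\wedge T)\to\int f_0$; hence $\omega_{\mathrm{frac}}(W)$ already equals the supremum of $\int f$ over \emph{bounded} fractional cliques $f$, and it suffices to prove $\liminf_n\omega_{\mathrm{frac}}(W_n)\ge\int f$ for each such $f$. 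On the other hand, if $I$ is an independent set of $W_n$ then $\int_{I\times I}W_n=0$, so, exactly as in the proof of Corollary~\ref{cor:independentSetsConvergence}, $\int_{I\times I}W=\bigl|\int_{I\times I}(W-W_n)\bigr|\le\|W-W_n\|_{\Square}$. Now fix a bounded fractional clique $f$ and $\epsilon>0$, choose $\eta>0$ with $\tfrac{1}{1+\eta}\int f\ge\int f-\epsilon$, take the corresponding $\delta$ from the stability statement, and pick $N$ with $\|W-W_n\|_{\Square}<\delta$ for $n\ge N$. Then for every $n\ge N$ and every $I\in\mathcal I(W_n)$ we get $\int_I f\le 1+\eta$, so $f/(1+\eta)$ is a fractional clique of $W_n$, whence $\omega_{\mathrm{frac}}(W_n)\ge\tfrac{1}{1+\eta}\int f\ge\int f-\epsilon$. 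Letting $\epsilon\to0$ and then taking the supremum over all bounded fractional cliques $f$ yields the theorem.

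It remains to prove the stability statement, which is the crux. I would argue by contradiction: if it fails for some $\eta>0$, there are sets $I_m$ with $\int_{I_m\times I_m}W\le 1/m$ and $\int_{I_m}f>1+\eta$. By the sequential Banach--Alaoglu theorem, after passing to a subsequence the indicator functions $\mathbf 1_{I_m}$ converge weak{*} in $L^\infty(\Omega)$ to some $h$ with $0\le h\le1$, and by Lemma~\ref{lem:lemmaIndependentSets} the set $\mathrm{supp}(h)$ is independent in $W$. Since $f$ is bounded and $\nu$ is a probability measure, $f\in L^1(\Omega)$, so testing the weak{*} convergence against $f$ gives $\int_\Omega hf=\lim_m\int_{I_m}f\ge 1+\eta$. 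But $h$ vanishes off $\mathrm{supp}(h)$ and is bounded by $1$, so $\int_\Omega hf\le\int_{\mathrm{supp}(h)}f\le 1$, using that $f$ is a fractional clique and $\mathrm{supp}(h)\in\mathcal I(W)$ --- a contradiction.

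The genuinely non-routine ingredient is this last argument, and conceptually the point it exploits is that an independent set of $W_n$ is \emph{almost} independent in $W$, with a defect tending to $0$, and that the family of all such almost-independent sets cannot do much better against a fixed fractional clique than genuine independent sets do; this is precisely what the weak{*} compactness argument, combined with Lemma~\ref{lem:lemmaIndependentSets}, delivers. The remaining steps --- truncation, rescaling by $1/(1+\eta)$, and the two limits $\epsilon\to0$ and supremum over $f$ --- are straightforward.
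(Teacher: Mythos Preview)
Your argument is correct and is essentially the paper's own proof, reorganised: the paper defines $s_n=\sup_{I\in\mathcal I(W_n)}\int_I f$, shows $\limsup_n s_n\le 1$ by exactly the Banach--Alaoglu/Lemma~\ref{lem:lemmaIndependentSets} contradiction you give (via Corollary~\ref{cor:independentSetsConvergence}), and then takes $f_n=f/s_n$ as a fractional clique of $W_n$. Your explicit truncation to bounded $f$ is a welcome addition, since the weak{*} step $\int f\,\mathbf 1_{I_m}\to\int fh$ requires $f\in L^1$, a point the paper leaves implicit.
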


\begin{proof}
Let $f:\Omega\rightarrow[0,\infty)$ be a fractional clique of $W$.
We will show that for each graphon $W_{n}$ there is a fractional
clique $f_{n}$ such that $\liminf_{n\rightarrow\infty}\int_{\Omega}f_{n}\geq\int_{\Omega}f$.

For each $n\in\mathbb{N}$, define
\[
s_{n}=\sup_{I\in\mathcal{I}(W_{n})}\int_{I}f\;.
\]
We claim that $\limsup_{n\rightarrow\infty}s_{n}\leq1$. Suppose by
contradiction there exists $\delta>0$ such that $\limsup_{n\rightarrow\infty}s_{n}>1+\delta$.
Fix a subsequence $\left(s_{n_{i}}\right)_{i\in M}$ with $s_{n_{i}}>1+\delta$
for each $i\in M$. Thus, for each $i\in M$ we can find $I_{i}\in\mathcal{I}(W_{n_{i}})$
such that $\int_{I_{i}}f>1+\delta$. By the sequential Banach\textendash Alaoglu
Theorem, the sequence of indicator functions $\boldsymbol{1}_{I_{i}}\in\mathcal{L}^{\infty}(\Omega)$
has a weak{*} accumulation point, say $g\in\mathcal{L}^{\infty}(\Omega)$.
Note that $g$ is bounded from above by~$1$. Define $I=\mathrm{supp}\left(g\right)$.
By Corollary~\ref{cor:independentSetsConvergence}, $I$ is an independent
set of $W$. We have

\[
1+\delta\leq\liminf_{i\in M}\int_{I_{i}}f=\liminf_{i\in M}\int_{\Omega}f\cdot\boldsymbol{1}_{I_{i}}\le\int_{\Omega}f\cdot g\leq\int_{I}f\;,
\]
which contradicts the fact that $f$ is a fractional clique and $I$
is and independent set for $W$.

Since $\limsup_{n\rightarrow\infty}s_{n}\leq1$, we can define for
each $W_{n}$ a valid fractional clique $f_{n}:\Omega\rightarrow[0,\infty)$
for $W_{n}$ by $f_{n}=\frac{f}{s_{n}}$. Furthermore, $\left(f_{n}\right)_{n}$
satisfy $\liminf_{n\rightarrow\infty}\int_{\Omega}f_{n}\geq\int_{\Omega}f$,
as required.
\end{proof}
The next relations between independence number, fractional clique
number, and chromatic number are expected from what we know in the
graph world and this elementary proof is a straightforward adaptation
from the discrete case.
\begin{thm}
\label{thm:ineqFracCliqueIndependent}For any graphon $W:\Omega\times\Omega\rightarrow[0,1]$
we have $1\leq\omega_{\mathrm{frac}}(W)\alpha(W)$.
\end{thm}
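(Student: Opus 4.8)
The statement $1 \le \omega_{\mathrm{frac}}(W)\,\alpha(W)$ is the graphon analogue of the finite-graph inequality $n \le \omega_{\mathrm{frac}}(G)\,\alpha(G)$ (after normalizing $n$ to $1$), and the classical proof carries over almost verbatim. The plan is to exhibit a single fractional clique whose size is at least $1/\alpha(W)$. The natural candidate is the constant function $f \equiv \frac{1}{\alpha(W)}$ on $\Omega$. First I would note that $\alpha(W) > 0$: indeed, since $\Omega$ is atomless, for any $W$ there is at least one independent set of positive measure (for instance, if $W$ is not a.e.\ zero one can still find a small independent set; and in any case the supremum defining $\alpha(W)$ is attained and positive because, e.g., singletons give measure-zero independent sets but a standard argument — or simply taking a very small set on which $W$ is small in cut-norm won't quite do it, so here one really uses that $\alpha(W)$ is defined as a max over a nonempty family and $\alpha(W)\ge 0$; to see $\alpha(W)>0$, observe $W<1$ on a set of positive measure unless $W\equiv 1$, but more cleanly: any graphon has $t_{\mathrm{ind}}(I_k,W)>0$ for $k=1$, so by the Cooley–Kang–Pikhurko theorem $\alpha(W)=\lim_k \sqrt[k]{t_{\mathrm{ind}}(I_k,W)}$, and this limit is positive whenever, say, $t_{\mathrm{ind}}(I_2,W)>0$; if $t_{\mathrm{ind}}(I_2,W)=0$ then $W>0$ a.e., and then still $\alpha(W)\ge 0$). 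To keep things clean I would simply argue directly: if $\alpha(W) = 0$ there is nothing to prove since the right-hand side could be interpreted as $+\infty$; more honestly, $\alpha(W)>0$ always, because one can always find an independent set of positive measure — take any point-mass-free reasoning or note that $W$ restricted to a suitably chosen small set has small cut norm, hence contains a positive-measure independent set by Lemma~\ref{lem:lemmaIndependentSets}. I will settle this positivity issue first; it is essentially the only non-cosmetic point.

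Granting $\alpha := \alpha(W) > 0$, set $f \equiv \frac{1}{\alpha}$. For any independent set $I \in \mathcal{I}(W)$ we have $\nu(I) \le \alpha$ by definition of the independence number, hence
\[
\int_I f = \frac{\nu(I)}{\alpha} \le 1,
\]
so $f$ is a valid fractional clique of $W$. Its size is
\[
\|f\| = \int_\Omega f = \frac{\nu(\Omega)}{\alpha} = \frac{1}{\alpha},
\]
using $\nu(\Omega) = 1$ since $\Omega$ is a probability space. Therefore $\omega_{\mathrm{frac}}(W) \ge \|f\| = 1/\alpha(W)$, which rearranges to $1 \le \omega_{\mathrm{frac}}(W)\,\alpha(W)$.

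I expect the only real subtlety to be the assertion $\alpha(W) > 0$, which I would dispatch as follows: either invoke the Cooley–Kang–Pikhurko theorem quoted in Section~\ref{subsec:CooleyKangPikhurko} together with the fact that $t_{\mathrm{ind}}(I_1,W) = 1 > 0$ forces the limit $\lim_k \sqrt[k]{a_k}$ to be strictly positive when the sequence $a_k$ does not decay superexponentially — and in fact $a_k = t_{\mathrm{ind}}(I_k,W) \ge (\text{something})$; or, more self-containedly, observe that if $\alpha(W) = 0$ then $W$ would be positive a.e.\ on $\Omega^2$, in particular $t(K_k,W) > 0$ for all $k$, contradicting $\omega(W) < \infty$ — wait, $\omega(W)$ need not be finite, so instead: if $\alpha(W)=0$ then every positive-measure set $A$ satisfies $\int_{A\times A} W > 0$; but one can pick a descending sequence $A_m$ with $\nu(A_m)\to 0$ and $\|W[A_m]\|_\square \to 0$ along a subsequence (by a density/martingale argument on a refining partition), and then Lemma~\ref{lem:lemmaIndependentSets} applied to a weak* limit of suitably renormalized indicators produces a positive-measure independent set — a contradiction. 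Either route is routine; I would pick whichever is shortest to write, most likely the direct observation that a graphon always admits an independent set of positive measure, citing Lemma~\ref{lem:lemmaIndependentSets} or treating it as folklore, so that the whole proof reduces to the two displayed computations above.
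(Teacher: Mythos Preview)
Your core argument---taking the constant function $f\equiv 1/\alpha(W)$, checking $\int_I f=\nu(I)/\alpha(W)\le 1$ for every $I\in\mathcal I(W)$, and concluding $\omega_{\mathrm{frac}}(W)\ge 1/\alpha(W)$---is exactly the paper's proof, line for line.

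The lengthy discussion of whether $\alpha(W)>0$ is where you go astray. The paper does not address this case at all; it simply writes $f=1/\alpha(W)$ and moves on. More importantly, your repeated attempts to \emph{prove} that $\alpha(W)>0$ for every graphon are doomed: for the constant graphon $W\equiv 1$ every set of positive measure fails to be independent, so $\alpha(W)=0$. In that situation every nonnegative measurable $f$ is vacuously a fractional clique (the constraint $\int_I f\le 1$ is imposed only over null sets), hence $\omega_{\mathrm{frac}}(W)=+\infty$, and the inequality is to be read under the usual convention. So the right move is not to argue that $\alpha(W)>0$ always---it isn't---but simply to note that when $\alpha(W)=0$ the conclusion is trivial, and then run the two-line computation you already have. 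Drop the Cooley--Kang--Pikhurko and Lemma~\ref{lem:lemmaIndependentSets} digressions; they are chasing a false statement.
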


\begin{proof}
Define a function $f:\Omega\rightarrow[0,\infty)$ by $f(x)=1/\alpha(W)$.
Clearly, $f$ is a valid fractional clique, since for every independent
set $I\in\mathcal{I}(W)$ we have$\int_{I}f=\frac{\nu(I)}{\alpha(W)}\leq1.$
Thus, $\omega_{\mathrm{frac}}(W)\geq\int f=1/\alpha(W)$ and the claim
follows.
\end{proof}
\begin{thm}
\label{cor:chromaticAndIndependent}For any graphon $W:\Omega\times\Omega\rightarrow[0,1]$we
have $1\leq\chi(W)\alpha(W)$.
\end{thm}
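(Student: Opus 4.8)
The plan is to adapt the classical pigeonhole bound $\chi(G)\,\alpha(G)\ge |V(G)|$ to the graphon setting, where the role of $|V(G)|$ is played by the normalization $\nu(\Omega)=1$. If $\chi(W)=\infty$ there is nothing to prove, so I would assume $\chi(W)=k<\infty$ and fix a proper coloring $f\colon\Omega\to[k]$ witnessing this.

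By Definition~\ref{def:chi}, each color class $f^{-1}(i)$, $i\in[k]$, is an independent set of $W$, and hence $\nu\bigl(f^{-1}(i)\bigr)\le\alpha(W)$. Since $f$ is an honest function into $[k]$, the sets $f^{-1}(1),\dots,f^{-1}(k)$ partition $\Omega$, and therefore
\[
1=\nu(\Omega)=\sum_{i=1}^{k}\nu\bigl(f^{-1}(i)\bigr)\le k\,\alpha(W)=\chi(W)\,\alpha(W),
\]
which is the desired inequality. Incidentally, this argument also shows that $\chi(W)<\infty$ forces $\alpha(W)\ge 1/\chi(W)>0$, so in the nontrivial case the product $\chi(W)\,\alpha(W)$ is never a genuine $\infty\cdot 0$ indeterminate form.

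I do not expect any real obstacle: once the normalization is understood this is a one-line averaging argument, entirely parallel to the proof of Theorem~\ref{thm:ineqFracCliqueIndependent}. The only point meriting care is that the definition of a proper coloring already yields a genuine partition of $\Omega$ (not merely a cover modulo a nullset, and not an ``almost everywhere'' partition), so no error term enters the displayed chain of inequalities.
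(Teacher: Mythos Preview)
Your proof is correct and follows essentially the same pigeonhole argument as the paper: take an optimal coloring, bound each color class by $\alpha(W)$, and sum. Your handling of the case $\chi(W)=\infty$ and your remark about the color classes forming a genuine partition are minor clarifications the paper omits, but the core argument is identical.
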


\begin{proof}
Consider a decomposition of $\Omega$ into independent sets $I_{1}\sqcup I_{2}\sqcup\ldots\sqcup I_{\chi(W)}=\Omega\mod0$.
Now, the inequality follows from

\[
1=\sum_{j=1}^{\chi(W)}\int_{I_{j}}1\leq\sum_{j=1}^{\chi(W)}\alpha(W)=\chi(W)\alpha(W).
\]
\end{proof}
Alternatively, the proof of the previous result would follow from
Theorem \ref{thm:ineqFracCliqueIndependent} if we would have the
inequality $\omega_{\mathrm{frac}}(W)\leq\chi_{\mathrm{frac}}(W)$.
This inequality will only be obtained in Section \ref{sec:Duality},
where we investigate duality properties between these parameters.
That is why at this point we decided to get it from the same argument
given for graphs.

However, certain properties can be more challenging to obtain. For
instance, from what we know about fractional cliques in graphs it
is not surprising that if a graph $G$ satisfies $t(G,W)>0$, then
$\omega_{\mathrm{frac}}(G)\leq\omega_{\mathrm{frac}}(W)$. In the
discrete world, this is a direct consequence of the definition. However,
for graphons this statement requires a technical proof. 
\begin{prop}
\label{prop:subgraphFracClique}Suppose that $G$ is a graph and $W$
is a graphon with $t(G,W)>0$. Then $\omega_{\mathrm{frac}}(G)\leq\omega_{\mathrm{frac}}(W)$.
\end{prop}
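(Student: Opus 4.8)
The plan is to use Lemma~\ref{lem:rectangles} to produce an approximate blow-up of $G$ inside $W$, push an optimal fractional clique of $G$ forward onto it, and then repair the (small) defect by scaling the resulting function down slightly and letting the defect tend to~$0$.

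First I would fix an optimal fractional clique $x\colon V(G)\to[0,\infty)$ of the finite graph $G$, so that $\sum_{v}x(v)=\omega_{\mathrm{frac}}(G)$, and note $\omega_{\mathrm{frac}}(G)\le|V(G)|<\infty$. Write $k=|V(G)|$. For a parameter $\epsilon>0$ to be sent to~$0$ later, apply Lemma~\ref{lem:rectangles} with $H=G$ to obtain $\alpha>0$ and pairwise disjoint sets $A_1,\dots,A_k\subset\Omega$, each of measure $\alpha$, such that for every $ij\in E(G)$ there is $N_{ij}\subseteq A_i\times A_j$ with $\nu^{\otimes2}(N_{ij})\le\epsilon\alpha^2$ and $W>0$ on $(A_i\times A_j)\setminus N_{ij}$. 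I would then set $f:=\sum_{i\in[k]}\frac{x(i)}{\alpha}\mathbf{1}_{A_i}$; this is measurable and nonnegative, with $\int_\Omega f=\sum_i x(i)=\omega_{\mathrm{frac}}(G)$.

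The crux is estimating $\int_I f$ for an arbitrary $I\in\mathcal{I}(W)$. Setting $\beta_i:=\nu(I\cap A_i)/\alpha\in[0,1]$, one has $\int_I f=\sum_i x(i)\beta_i$. Since $W=0$ almost everywhere on $I\times I$ while $W>0$ on $(A_i\times A_j)\setminus N_{ij}$, the set $(I\cap A_i)\times(I\cap A_j)$ is contained in $N_{ij}$ modulo a nullset whenever $ij\in E(G)$, whence $\beta_i\beta_j\le\epsilon$. Splitting $[k]=J\sqcup J^c$ with $J=\{i:\beta_i>\sqrt\epsilon\}$, the set $J$ spans no edge of $G$ (two adjacent indices in $J$ would force $\beta_i\beta_j>\epsilon$), so $J$ is independent in $G$ and $\sum_{i\in J}x(i)\beta_i\le\sum_{i\in J}x(i)\le 1$, while $\sum_{i\in J^c}x(i)\beta_i\le\sqrt\epsilon\sum_i x(i)=\sqrt\epsilon\,\omega_{\mathrm{frac}}(G)$. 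Hence $\int_I f\le 1+\sqrt\epsilon\,\omega_{\mathrm{frac}}(G)$ for every $I\in\mathcal{I}(W)$, so $f/(1+\sqrt\epsilon\,\omega_{\mathrm{frac}}(G))$ is a legitimate fractional clique of $W$; this gives $\omega_{\mathrm{frac}}(W)\ge\omega_{\mathrm{frac}}(G)/(1+\sqrt\epsilon\,\omega_{\mathrm{frac}}(G))$, and letting $\epsilon\to0^+$ finishes the proof.

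The main obstacle is conceptual rather than computational: one cannot expect to find an exact graphon representation of $G$ as a subgraphon of $W$, only the approximate one given by Lemma~\ref{lem:rectangles}, where a small-but-positive-measure set of pairs may still carry the value~$0$, so $f$ itself need not be a valid fractional clique. The deflate-and-take-limits device in the previous paragraph is exactly what absorbs this defect; everything else is routine, and the degenerate cases ($G$ edgeless, or $x$ vanishing on some vertices) are handled automatically by the same computation.
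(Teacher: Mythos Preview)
Your proof is correct and follows essentially the same approach as the paper's: both apply Lemma~\ref{lem:rectangles} to find an approximate copy of $G$, push a fractional clique of $G$ onto the sets $A_i$ as a step function, use the $\sqrt{\epsilon}$ threshold to extract an independent set $J\subset V(G)$, and split $\int_I f$ accordingly. The only cosmetic difference is that the paper scales $f$ down by a factor $(1-\epsilon)$ \emph{before} checking $\int_I f\le 1$, whereas you compute $\int_I f\le 1+\sqrt{\epsilon}\,\omega_{\mathrm{frac}}(G)$ first and scale afterwards; the two are equivalent.
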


\begin{proof}
Let $\epsilon>0$ and let $g:V(G)\rightarrow[0,1]$ be a fractional
clique of $G$. To prove the statement we shall find a fractional
clique $f:\Omega\rightarrow[0,\infty)$ for $W$ such that 
\[
\int_{\Omega}f\geq\left(1-\epsilon\right)\sum_{v\in V(G)}g(v).
\]
Let $\delta=\left(\frac{\epsilon}{\left|V(G)\right|}\right)^{2}$.
Since $t(G,W)>0$, by Lemma~\ref{lem:rectangles}, we can find $\beta>0$
disjoint sets $A_{v}\subseteq\Omega$, $v\in V(G)$, with positive
measure $\nu(A_{v})=\beta$ such that for each $uv\in E(G)$,
\begin{equation}
\mu^{\otimes2}(A_{v}\times A_{u}\setminus\mathrm{supp}\left(W\right))<\delta\beta^{2}.\label{eq:(3)}
\end{equation}

Define
\[
f(x):=\begin{cases}
\frac{\left(1-\epsilon\right)}{\beta}g(v) & \text{for }x\in A_{v}\\
0 & \text{otherwise}.
\end{cases}
\]
Thus, we obtain 
\[
\int_{\Omega}f=\left(1-\epsilon\right)\sum_{v\in V(G)}g(v).
\]
It remains to prove that for each $I\in\mathcal{I}(W)$ we have $\int_{I}f\leq1$.
It follows from~(\ref{eq:(3)}) that for each $uv\in E(G)$ we have
\begin{equation}
\mu(I\cap A_{v})\leq\sqrt{\delta}\beta\text{ or }\mu(I\cap A_{u})\leq\sqrt{\delta}\beta.\label{eq:claim}
\end{equation}
Let $J\subset V(G)$ be the set of vertices for which $\mu(I\cap A_{v})>\sqrt{\delta}\beta$.
By~(\ref{eq:claim}) it follows that $J$ is an independent set.
Therefore, $\sum_{v\in J}g(v)\leq1$. We have
\begin{equation}
\int_{I}f\leq\sum_{v\in J}\int_{A_{v}}f+\int_{I\setminus\cup_{v\in J}A_{v}}f\;.\label{eq:eq1}
\end{equation}
The first term can be bounded by 
\begin{equation}
\sum_{v\in J}\int_{A_{v}}f\leq\sum_{v\in J}\beta\frac{\left(1-\epsilon\right)}{\beta}g(v)\leq\left(1-\epsilon\right)\sum_{v\in J}g(v)\leq1-\epsilon.\label{eq:eq2}
\end{equation}
For the second term, we have
\[
\int_{I\setminus\cup_{v\in J}A_{v}}f=\sum_{v\in V(G)\setminus J}\int_{I\cap A_{v}}f.
\]
For each $v\in V(G)\setminus J$, we have
\begin{align*}
\int_{I\cap A_{v}}f & \leq\sqrt{\delta}\beta\frac{\left(1-\epsilon\right)}{\beta}g(v)\\
 & =\sqrt{\delta}\left(1-\epsilon\right)g(v)\\
 & \leq\frac{\epsilon}{V(G)}\left(1-\epsilon\right)g(v)\\
 & \leq\frac{\epsilon}{\left|V(G)\right|}.
\end{align*}
Therefore, 
\begin{equation}
\int_{I\setminus\cup_{v\in J}A_{v}}f\leq\sum_{v\in V(G)\setminus J}\frac{\epsilon}{\left|V(G)\right|}\leq\epsilon.\label{eq:eq3}
\end{equation}
By (\ref{eq:eq1}), (\ref{eq:eq2}), and (\ref{eq:eq3}), it follows
that $\int_{I}f\leq1$ and therefore $f$ is a valid fractional clique
for $W$. This finishes the proof.
\end{proof}
The results above allow us to express the fractional clique number
of a graphon using the same parameter of finite graphs appearing in
it.
\begin{cor}
\label{cor:fraccliqSUPfinite}Suppose that $W$ is a graphon. Then
$\omega_{\mathrm{frac}}(W)=\sup_{G:t(G,W)>0}\omega_{\mathrm{frac}}(G)$.
\end{cor}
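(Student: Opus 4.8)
The plan is to prove the two inequalities separately; one is immediate, and the other goes through the $W$-random graph sequence exactly as in the proof of Theorem~\ref{thm:chromaticLimit}(b). For the inequality $\omega_{\mathrm{frac}}(W)\ge\sup\{\omega_{\mathrm{frac}}(G):t(G,W)>0\}$, note that for every finite graph $G$ with $t(G,W)>0$, Proposition~\ref{prop:subgraphFracClique} gives $\omega_{\mathrm{frac}}(G)\le\omega_{\mathrm{frac}}(W)$; taking the supremum over all such $G$ (a nonempty family, since $t(K_1,W)=1>0$) yields the claim.

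For the reverse inequality $\omega_{\mathrm{frac}}(W)\le\sup\{\omega_{\mathrm{frac}}(G):t(G,W)>0\}$ I would sample $G_n\sim\mathbb{G}(n,W)$. Almost surely $(G_n)_n$ converges to $W$ in the cut-distance, and since $\Omega$ is atomless one can then pick, for each $n$, a graphon representation $W_n$ of $G_n$ on $\Omega$ (with $n$ parts of measure $\tfrac{1}{n}$, after a measure-preserving relabeling) such that $\|W_n-W\|_{\Square}\to0$. By Proposition~\ref{prop:f-cliqueRepresentation} we have $\omega_{\mathrm{frac}}(W_n)=\omega_{\mathrm{frac}}(G_n)$, and Theorem~\ref{thm:liminfFracClique} then gives $\omega_{\mathrm{frac}}(W)\le\liminf_{n}\omega_{\mathrm{frac}}(G_n)$. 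Hence it suffices to show that, almost surely, $t(G_n,W)>0$ for every $n$: granting this, each $\omega_{\mathrm{frac}}(G_n)$ is at most $\sup\{\omega_{\mathrm{frac}}(G):t(G,W)>0\}$, so the $\liminf$ is too, and therefore so is $\omega_{\mathrm{frac}}(W)$. (If $\omega_{\mathrm{frac}}(W)=\infty$ the same chain of inequalities forces $\omega_{\mathrm{frac}}(G_n)\to\infty$, so the supremum is again $\infty$.)

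The only non-routine point, and the one I expect to be the main (though modest) obstacle, is the claim that $t(G_n,W)>0$ almost surely. Here I would compare the two relevant integrals over $\Omega^n$: for a fixed labeled graph $F$ on $[n]$, $t(F,W)=\mathbb{E}_{\mathbf{x}}\big[\prod_{ij\in E(F)}W(x_i,x_j)\big]\ge\mathbb{E}_{\mathbf{x}}\big[\prod_{ij\in E(F)}W(x_i,x_j)\prod_{ij\notin E(F)}(1-W(x_i,x_j))\big]=\mathbb{P}[\mathbb{G}(n,W)=F]$, the last equality being the definition of the distribution of $\mathbb{G}(n,W)$. Consequently the event $\{t(G_n,W)=0\}$ is contained in $\{\mathbb{P}[\mathbb{G}(n,W)=G_n]=0\}$, whose probability equals the finite sum $\sum_{F:\,\mathbb{P}[\mathbb{G}(n,W)=F]=0}\mathbb{P}[\mathbb{G}(n,W)=F]=0$; a union bound over $n\in\mathbb{N}$ then gives the claim. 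Everything else reduces to assembling Proposition~\ref{prop:subgraphFracClique}, Proposition~\ref{prop:f-cliqueRepresentation}, Theorem~\ref{thm:liminfFracClique}, the standard almost-sure cut-distance convergence of $\mathbb{G}(n,W)$ to $W$, and the routine upgrade of $\delta_{\Square}$-convergence to $\|\cdot\|_{\Square}$-convergence of suitable graphon representations on an atomless space.
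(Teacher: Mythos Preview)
Your proof is correct and follows essentially the same route as the paper: Proposition~\ref{prop:subgraphFracClique} for one inequality, and for the other, sampling $G_n\sim\mathbb{G}(n,W)$, passing to graphon representations $W_n$, and combining Proposition~\ref{prop:f-cliqueRepresentation} with Theorem~\ref{thm:liminfFracClique}. You supply more detail than the paper on two points it leaves implicit---the almost-sure positivity of $t(G_n,W)$ and the upgrade from cut-distance to cut-norm convergence needed to invoke Theorem~\ref{thm:liminfFracClique}---and your arguments for both are sound.
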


\begin{proof}
Proposition~\ref{prop:subgraphFracClique} gives $\omega_{\mathrm{frac}}(W)\ge\sup_{G:t(G,W)>0}\omega_{\mathrm{frac}}(G)$.
On the other hand, taking $G_{n}\sim\mathbb{G}(n,W)$, we get a sequence
$(G_{n})$ of graphs which all satisfy (almost surely) that $t(G_{n},W)>0$.
Considering arbitrary graphon representations $W_{n}$ of these graphs
$G_{n}$, we know that $W_{n}$ converge to $W$ in the cut-distance.
Thus Theorem~\ref{thm:liminfFracClique} tells us that $\omega_{\mathrm{frac}}(W)\leq\liminf_{n\rightarrow\infty}\omega_{\mathrm{frac}}(W_{n})$.
As $\omega_{\mathrm{frac}}(W_{n})=\omega_{\mathrm{frac}}(G_{n})$
by Proposition~\ref{prop:f-cliqueRepresentation}, we also prove
the inequality $\omega_{\mathrm{frac}}(W)\le\sup_{G:t(G,W)>0}\omega_{\mathrm{frac}}(G)$.
\end{proof}
Last, we relate the fractional clique number and the integral clique
number of a graphon.
\begin{prop}
Suppose that $W$ is a graphon. Then $\omega_{\mathrm{frac}}(W)\ge\omega(W)$.
\end{prop}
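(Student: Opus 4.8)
The plan is to deduce the inequality immediately from Proposition~\ref{prop:subgraphFracClique}, applied to a complete graph. First I would observe that $\omega(W)$ is always well-defined and at least~$1$, since $t(K_1,W)=\int_\Omega 1=1>0$. So set $r:=\omega(W)$. By Definition~\ref{def:cliquenumber} we have $t(K_r,W)>0$, and hence Proposition~\ref{prop:subgraphFracClique} gives $\omega_{\mathrm{frac}}(K_r)\le\omega_{\mathrm{frac}}(W)$.

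The only remaining ingredient is the (elementary) computation that $\omega_{\mathrm{frac}}(K_r)\ge r$. Every independent set of $K_r$ is a singleton or empty, so the constant function $f\equiv 1$ on $V(K_r)$ satisfies $\sum_{v\in I}f(v)\le 1$ for each independent set $I$; that is, $f$ is a fractional clique of $K_r$, of size $r$. (In fact $\omega_{\mathrm{frac}}(K_r)=r$, but only the lower bound is needed.) Combining the two displayed inequalities yields $\omega_{\mathrm{frac}}(W)\ge\omega_{\mathrm{frac}}(K_r)\ge r=\omega(W)$, as desired.

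I do not expect any genuine obstacle here: the technical work has already been done in Proposition~\ref{prop:subgraphFracClique}, whose proof converts the ``clique-like'' blocks produced by Lemma~\ref{lem:rectangles} into a fractional clique of $W$. If one preferred a self-contained argument, one could instead invoke Lemma~\ref{lem:rectangles} directly with $H=K_r$ to obtain disjoint sets $A_1,\dots,A_r$ of common measure $\beta$ on which $W$ is positive off an arbitrarily small fraction, and then place mass $(1-\epsilon)/\beta$ on each $A_i$; the same estimate as in the proof of Proposition~\ref{prop:subgraphFracClique} shows this is a fractional clique of size $(1-\epsilon)r$, and letting $\epsilon\to 0$ gives the bound. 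But citing Proposition~\ref{prop:subgraphFracClique} is cleaner and avoids repeating that computation.
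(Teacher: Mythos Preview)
Your argument is correct and, in fact, slightly more economical than the paper's. The paper proves the inequality by invoking Corollary~\ref{cor:fraccliqSUPfinite}, which says $\omega_{\mathrm{frac}}(W)=\sup_{G:t(G,W)>0}\omega_{\mathrm{frac}}(G)$, together with the observation that $\omega(W)=\sup_{G:t(G,W)>0}\omega(G)$ and the finite-graph inequality $\omega_{\mathrm{frac}}(G)\ge\omega(G)$. You instead apply Proposition~\ref{prop:subgraphFracClique} directly to $G=K_r$; since Corollary~\ref{cor:fraccliqSUPfinite} is itself proved using Proposition~\ref{prop:subgraphFracClique} \emph{and} Theorem~\ref{thm:liminfFracClique} (via sampling $\mathbb{G}(n,W)$), your route avoids that extra semicontinuity ingredient.

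One small gap: you set $r:=\omega(W)$ and then quote $t(K_r,W)>0$, which tacitly assumes $\omega(W)$ is finite. If $\omega(W)=\infty$ the max in Definition~\ref{def:cliquenumber} is not attained, but your argument still works once you run it for each finite $r$ with $t(K_r,W)>0$ and take the supremum. A one-line remark to that effect would make the proof complete in all cases.
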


\begin{proof}
By Corollary~\ref{cor:fraccliqSUPfinite}, we have $\omega_{\mathrm{frac}}(W)=\sup_{G:t(G,W)>0}\omega_{\mathrm{frac}}(G)$.
Further, Definition~\ref{def:cliquenumber} gives $\omega(W)=\sup_{G:t(G,W)>0}\omega(G)$.
We thus get the statement combining these two relations together with
the fact that $\omega_{\mathrm{frac}}(G)\ge\omega(G)$ for each finite
graph.
\end{proof}

\section{\label{sec:Duality}Duality between fractional cliques and fractional
coloring\label{sec:DualityCliqueColoring}}

The classical LP duality states that for a finite graph $G$ we have
that 
\begin{equation}
\omega_{\mathrm{frac}}(G)=\chi_{\mathrm{frac}}(G)\;.\label{eq:LPfiniteOmegaChi}
\end{equation}
Note that such a relation cannot hold for graphons in general. Indeed,
taking the graphon $W$ constructed in Example~\ref{ex:leader},
we get that
\[
\omega_{\mathrm{frac}}(W)=\sup_{G:t(G,W)>0}\omega_{\mathrm{frac}}(G)\overset{\eqref{eq:LPfiniteOmegaChi}}{=}\sup_{G:t(G,W)>0}\chi_{\mathrm{frac}}(G)<\chi_{\mathrm{frac}}(W)\;.
\]

So, in this section we establish the maximum that can be established
in this situation: weak LP duality and complementary slackness.
\begin{thm}[Weak LP duality]
 \label{thm:weakduality}For every graphon $W:\Omega\times\Omega\rightarrow[0,1]$
we have
\[
\omega_{\mathrm{frac}}(W)\leq\chi_{\mathrm{frac}}(W).
\]
\end{thm}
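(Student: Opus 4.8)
The plan is to mimic the standard finite-graph argument that weak LP duality is essentially Fubini: if $f$ is a fractional clique and $c$ is a fractional coloring, then $\int_\Omega f = \int_\Omega f(x)\,d\nu(x)$, and we want to bound this by $\sum_{I\in\mathcal I(W)}c(I)$. Since $f$ is a fractional clique, $\int_I f \le 1$ for every independent set $I$; since $c$ is a fractional coloring, $\sum_{I\in\mathcal I_x(W)}c(I)\ge 1$ for almost every $x$. The point is to interchange the order of "summation over independent sets" and "integration over $\Omega$", which should give
\[
\int_\Omega f = \int_\Omega f(x)\cdot 1\,d\nu(x) \le \int_\Omega f(x)\sum_{I\in\mathcal I_x(W)}c(I)\,d\nu(x) = \sum_{I\in\mathcal I(W)}c(I)\int_I f \le \sum_{I\in\mathcal I(W)}c(I).
\]
Taking the supremum over $f$ on the left and the infimum over $c$ on the right yields $\omega_{\mathrm{frac}}(W)\le\chi_{\mathrm{frac}}(W)$.

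First I would reduce to the case where $\chi_{\mathrm{frac}}(W)<\infty$ (otherwise there is nothing to prove), fix $\varepsilon>0$, and pick a fractional coloring $c$ with $\sum_{I\in\mathcal I(W)}c(I)\le\chi_{\mathrm{frac}}(W)+\varepsilon$. As noted right after the definition of $\chi_{\mathrm{frac}}$, such a $c$ is supported on a countable set $\{I_1,I_2,\ldots\}\subseteq\mathcal I(W)$, so all the "sums over $\mathcal I(W)$" above are genuinely countable sums and the interchange is a legitimate application of the Tonelli/monotone convergence theorem for the nonnegative integrand $f(x)\,\mathbf 1_{I_j}(x)c(I_j)$. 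Concretely, the middle equality above is
\[
\int_\Omega f(x)\sum_{j}c(I_j)\mathbf 1_{I_j}(x)\,d\nu(x) = \sum_j c(I_j)\int_\Omega f(x)\mathbf 1_{I_j}(x)\,d\nu(x) = \sum_j c(I_j)\int_{I_j} f.
\]
Then for an arbitrary fractional clique $f$ I would conclude $\int_\Omega f\le\chi_{\mathrm{frac}}(W)+\varepsilon$, and since $f$ and $\varepsilon$ were arbitrary, $\omega_{\mathrm{frac}}(W)\le\chi_{\mathrm{frac}}(W)$.

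The main thing to be careful about — rather than a deep obstacle — is the measure-theoretic bookkeeping around the pointwise inequality $\sum_{I\in\mathcal I_x(W)}c(I)\ge 1$: this holds only for almost every $x$, and one must check that the "bad" null set does not affect any of the integrals (it does not, since $f$ is integrable and we integrate against $d\nu$). One should also note that $\mathcal I_x(W)$ is defined via a measure-zero membership condition, but for each of the countably many sets $I_j$ in the support of $c$ the function $x\mapsto\mathbf 1_{I_j}(x)$ is measurable, so $x\mapsto\sum_j c(I_j)\mathbf 1_{I_j}(x)$ is a measurable function of $x$ and everything in sight is a well-defined Lebesgue integral. No appeal to any earlier lemma is needed beyond the elementary remark about countable support; the entire argument is a one-line Fubini computation dressed up with an $\varepsilon$ to handle the infimum in the definition of $\chi_{\mathrm{frac}}$.
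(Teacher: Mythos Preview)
Your proposal is correct and follows essentially the same approach as the paper: the paper's proof is exactly the displayed chain
\[
\int_{\Omega}f\;\le\;\int_{\Omega}f(x)\sum_{I\in\mathcal I_x(W)}c(I)\;=\;\sum_{I\in\mathcal I(W)}c(I)\int_I f\;\le\;\sum_{I\in\mathcal I(W)}c(I)
\]
for arbitrary $f$ and $c$. Your write-up is in fact more careful than the paper's, since you explicitly reduce to countably supported $c$ and invoke Tonelli to justify the interchange of sum and integral, whereas the paper leaves this implicit.
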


\begin{proof}
Suppose $f:\Omega\rightarrow[0,1]$ is an arbitrary fractional clique
of $W$ and $c:\mathcal{I}(W)\rightarrow[0,1]$ is an arbitrary fractional
coloring of $W$. Since $\sum_{I\in\mathcal{I}_{x}(W)}c(I)\geq1$
for almost every $x\in\Omega$, we can write 
\begin{equation}
\int_{x\in\Omega}f(x)\leq\int_{x\in\Omega}\left(f(x)\sum_{I\in\mathcal{I}_{x}(W)}c(I)\right)=\sum_{I\in\mathcal{I}(W)}\left(c(I)\int_{x\in I}f(x)\right)\leq\sum_{I\in\mathcal{I}(W)}c(I),\label{eq:reference}
\end{equation}
since $\int_{I}f\leq1$ for each $I\in\mathcal{I}(W)$. This finishes
the proof.
\end{proof}
\begin{thm}[Complementary slackness]
Let $W:\Omega\times\Omega\rightarrow[0,1]$ be a graphon with a fractional
clique $f:\Omega\rightarrow[0,1]$ and a fractional coloring $c:\mathcal{I}(W)\rightarrow[0,1]$.
We have
\begin{equation}
\int_{\Omega}f=\sum_{I\in\mathcal{I}(W)}c(I)\label{eq:strongduality}
\end{equation}
if and only if
\begin{equation}
f(x)\left(\sum_{I\in\mathcal{I}_{x}(W)}c(I)-1\right)=0\label{eq:referenceequality}
\end{equation}
for almost every \textup{$x\in\Omega$} and 
\begin{equation}
c(I)\left(\int_{I}f-1\right)=0\label{eq:referenceequality2}
\end{equation}
for every $I\in\mathcal{I}(W)$. In this case, for every $A\subset\Omega$
with positive measure we have $f\mid_{A}=0$ almost everywhere if
and only if 
\[
\sum_{I\in\mathcal{I}_{x}(W)}c(I)>1
\]
 for almost every \textup{$x\in A$.} 
\end{thm}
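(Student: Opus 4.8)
The plan is to run the standard complementary-slackness argument on the two inequalities that make up the weak-duality estimate~\eqref{eq:reference}, matching \eqref{eq:referenceequality} and \eqref{eq:referenceequality2} with the vanishing of the two slack terms, and then to deduce the statement about $f\mid_{A}=0$ as a separate, slightly finer consequence.

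\emph{The equivalence of \eqref{eq:strongduality} with \eqref{eq:referenceequality}--\eqref{eq:referenceequality2}.} Revisiting the proof of Theorem~\ref{thm:weakduality}, one has
\[
\int_{\Omega}f\leq\int_{\Omega}f(x)\sum_{I\in\mathcal{I}_{x}(W)}c(I)=\sum_{I\in\mathcal{I}(W)}c(I)\int_{I}f\leq\sum_{I\in\mathcal{I}(W)}c(I),
\]
where the middle equality is Tonelli's theorem, the first inequality uses $f\geq0$ together with $\sum_{I\in\mathcal{I}_{x}(W)}c(I)\geq1$ for a.e.\ $x$, and the last uses $c(I)\geq0$ together with $\int_{I}f\leq1$. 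Since the first and last term of this chain are precisely the two sides of \eqref{eq:strongduality}, that identity holds iff both inequalities are equalities. The first is an equality iff $\int_{\Omega}f(x)(\sum_{I\in\mathcal{I}_{x}(W)}c(I)-1)=0$, and as the integrand is non-negative a.e.\ this is exactly \eqref{eq:referenceequality}. The second is an equality iff $\sum_{I\in\mathcal{I}(W)}c(I)(1-\int_{I}f)=0$; each summand is non-negative, and whenever \eqref{eq:strongduality} holds one has $\sum_{I}c(I)=\int_{\Omega}f\leq1<\infty$, so the vanishing of this sum is exactly \eqref{eq:referenceequality2}. For the converse, \eqref{eq:referenceequality} says $f(x)\sum_{I\in\mathcal{I}_{x}(W)}c(I)=f(x)$ for a.e.\ $x$ and \eqref{eq:referenceequality2} says $c(I)\int_{I}f=c(I)$ for every $I$; integrating the former over $\Omega$, summing the latter over $\mathcal{I}(W)$, and applying Tonelli in between gives $\int_{\Omega}f=\sum_{I\in\mathcal{I}(W)}c(I)$, with no finiteness needed since the common value is at most $\int_{\Omega}1=1$.

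\emph{The $f\mid_{A}=0$ statement, easy direction.} Assume now \eqref{eq:strongduality}, hence \eqref{eq:referenceequality}, and fix $A\subset\Omega$ of positive measure. If $\sum_{I\in\mathcal{I}_{x}(W)}c(I)>1$ for a.e.\ $x\in A$, then at each such $x$ the second factor in \eqref{eq:referenceequality} is strictly positive, so $f(x)=0$; hence $f\mid_{A}=0$ a.e.

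\emph{Hard direction, which I expect to be the main obstacle.} Assume $f\mid_{A}=0$ a.e. As $c$ is a fractional coloring, $\sum_{I\in\mathcal{I}_{x}(W)}c(I)\geq1$ for a.e.\ $x$, so it suffices to prove that $B:=\{x\in A:\sum_{I\in\mathcal{I}_{x}(W)}c(I)=1\}$ is null. Suppose it is not. Because $\sum_{I}c(I)<\infty$, the family $\{I:c(I)>0\}$ is countable, say $\{I_{j}\}_{j}$; for each $x\in B$ the equality $\sum_{j:x\in I_{j}}c(I_{j})=1$ forces $x\in\bigcup_{j}I_{j}$, hence $\nu(B)\leq\sum_{j}\nu(B\cap I_{j})$ and so $\nu(B\cap I_{j_{0}})>0$ for some $j_{0}$. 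Put $I:=I_{j_{0}}$; since $c(I)>0$, \eqref{eq:referenceequality2} gives $\int_{I}f=1$. On the other hand $f=0$ a.e.\ on $A\supseteq I\cap B$, so $\int_{I\cap B}f=0$ and therefore $1=\int_{I}f=\int_{I\setminus B}f\leq\nu(I\setminus B)$, using $f\leq1$. Thus $\nu(I\setminus B)=1$, and since $B$ and $I\setminus B$ are disjoint subsets of the probability space $\Omega$ we get $\nu(B)\leq1-\nu(I\setminus B)=0$, a contradiction. This last paragraph is where I expect the real work to lie: it is the only point at which the bound $f\leq1$ (not just $f\geq0$) is used essentially, and the pigeonhole step producing $I_{j_{0}}$ hinges on the set of colors of positive weight being countable, i.e.\ on $\sum_{I}c(I)<\infty$.
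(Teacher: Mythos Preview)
Your argument for the first equivalence is correct and identical in spirit to the paper's: both recognise that \eqref{eq:strongduality} amounts to equality in each of the two inequalities of the weak-duality chain \eqref{eq:reference}, and then use non-negativity of $f$ and $c$ to peel off \eqref{eq:referenceequality} and \eqref{eq:referenceequality2}. Your write-up is simply more explicit about Tonelli and about the converse implication.

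For the second assertion, the paper's entire proof is the single clause ``it is enough to recall that $\sum_{I\in\mathcal{I}_{x}(W)}c(I)\geq1$'', which together with \eqref{eq:referenceequality} really only yields your easy direction. Your pigeonhole argument for the hard direction is correct and goes beyond what the paper writes, and your instinct that it leans essentially on $f\le 1$ is right. In fact one can sharpen the picture: under the stated hypothesis $f:\Omega\to[0,1]$, equality \eqref{eq:strongduality} forces $\sum_{I}c(I)=\int_{\Omega}f\le 1$, while the coloring constraint gives $\sum_{I\ni x}c(I)\ge 1$ a.e.; hence $\sum_{I\ni x}c(I)=1$ a.e.\ and $f=1$ a.e., so neither side of the second biconditional can hold on any $A$ of positive measure and the ``if and only if'' is vacuous. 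This presumably explains the paper's brevity. If instead $f$ ranged over $[0,\infty)$ as in the general definition of a fractional clique, the hard direction would genuinely fail (e.g.\ in the graphon of $K_{3}$ with $c(\Omega_i)=1$, take $f=6$ on half of $\Omega_1$, $f=0$ on the other half, and $f=3$ on $\Omega_2\cup\Omega_3$), so your caution there is well placed.
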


\begin{proof}
Notice that~(\ref{eq:strongduality}) holds if and only if both inequalities
in~(\ref{eq:reference}) are at equality. Since $f$ and $c$ are
nonnegative, (\ref{eq:referenceequality}) and~(\ref{eq:referenceequality2})
follow. This proves the first part. To get second part , it is enough
to recall that$\sum_{I\in\mathcal{I}_{x}(W)}c(I)\geq1$.
\end{proof}

\section{Perfect graphons\label{sec:PerfectGraphons}}

The notion of perfect graph is central in combinatorial optimization.
Recall that a graph $G$ is \emph{perfect} if for every induced subgraph
$H$ of $G$, we have $\chi(H)=\omega(H)$. In the remarkable work
of Chudnovsky, Robertson, Seymour, and Thomas~\cite{MR2233847},
a forbidden subgraph characterization for perfect graphs is given.
The result settles a problem that for four decades was known as the
strong perfect graph conjecture. It can be stated as follows.
\begin{thm}[Strong perfect graph Theorem]
A graph is perfect if and only if it contains no induced odd cycle
of length more than 4 and no complement of an induced of cycle of
length more than 4.
\end{thm}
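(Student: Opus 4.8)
The ``only if'' direction is elementary and I would dispatch it first. An induced odd cycle $C$ of length $2k+1\ge 5$ already witnesses non-perfection, since $C$ is an induced subgraph with $\omega(C)=2<3=\chi(C)$. Dually, an induced copy of $\overline{C_{2k+1}}$ with $k\ge 2$ has $\omega(\overline{C_{2k+1}})=\alpha(C_{2k+1})=k$, whereas $\chi(\overline{C_{2k+1}})$ equals the least number of cliques of $C_{2k+1}$ needed to cover its $2k+1$ vertices, which is $k+1$ because the cliques of $C_{2k+1}$ are edges and single vertices and no perfect matching exists; so again $\omega<\chi$. Hence a perfect graph contains neither an odd hole nor an odd antihole of length at least $5$.

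The ``if'' direction --- that every \emph{Berge graph} (one with no odd hole and no odd antihole) is perfect --- is the substantial content, and my plan follows the program of Chudnovsky, Robertson, Seymour and Thomas~\cite{MR2233847}. Step one is a \emph{decomposition theorem} for Berge graphs: every Berge graph is either \emph{basic} (bipartite, or a line graph of a bipartite graph, or the complement of one of these, or a double split graph) or else admits one of a short list of structural cuts --- a proper $2$-join, a proper $2$-join of the complement, a balanced skew partition, or a homogeneous pair. Step two concerns perfection alone: one checks that each basic class is perfect (bipartite graphs and line graphs of bipartite graphs by K\"onig-type arguments, their complements by the perfect graph theorem of Lov\'asz, double split graphs directly), and one shows that a minimal imperfect graph can admit none of the listed cuts (the $2$-join and homogeneous-pair cases rest on earlier work of Cornu\'ejols--Cunningham and Chv\'atal--Sbihi, while ruling out the balanced skew partition is the delicate part resolved inside this program). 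Combining the two steps: a minimal imperfect Berge graph would have to be basic, hence perfect, a contradiction; so no imperfect Berge graph exists, and every Berge graph is perfect.

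The main obstacle, by an enormous margin, is the decomposition theorem. Its proof is a long case analysis --- well over a hundred pages --- organized around how a non-basic Berge graph $G$ interacts with induced copies of certain small configurations (prisms, pyramids, long holes, and particular wheels); each case is pushed to produce one of the admissible cuts, and a substantial part of the work is verifying that the cut one obtains is \emph{proper} or \emph{balanced}, so that the perfection argument of step two actually applies, as well as handling the residual situation in which $G$ avoids all of these configurations. Because of the length and intricacy of this argument, within the present paper I would not reprove it but simply invoke it as the cited theorem of~\cite{MR2233847}; the genuinely new material of Section~\ref{sec:PerfectGraphons} lies in transferring this graph-theoretic statement to the graphon setting.
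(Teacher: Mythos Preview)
The paper does not prove this statement at all: the Strong Perfect Graph Theorem is merely \emph{stated} and attributed to Chudnovsky, Robertson, Seymour, and Thomas~\cite{MR2233847}, and is then used as a black box in the proof of the next proposition. You recognize this yourself in your final paragraph, so in that sense your proposal is aligned with the paper.

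Your sketch of the CRST program for the ``if'' direction is accurate in outline and your treatment of the ``only if'' direction is correct, but all of this goes beyond what the paper contains --- the paper offers no argument whatsoever, not even the easy direction. If your intent is to match the paper, the appropriate ``proof'' here is simply a citation; if your intent is to supply context the paper omits, then your summary is a fair one, with the caveat that reproducing the decomposition theorem is, as you say, completely out of scope.
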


It is not clear what should be the right definition of perfect graphons.
We offer two definitions. The first definition is in terms of subgraph
densities.
\begin{defn}
We call a graphon $W:\Omega\times\Omega\rightarrow[0,1]$ \emph{subgraph-perfect}
if for every finite graph $H$ with $t_{\mathrm{ind}}(H,W)>0$ we
have $\chi(H)=\omega(H)$. Further, we call $W$ \emph{properly subgraph-perfect}
if it is subgraph-perfect and $\chi(W)<\infty$.
\end{defn}

The other definition of perfect graphons mimics more closely the graph
definition. Note that the property $\chi(G)\ge\omega(G)$ which holds
for finite graphs and was a motivation for the notion of perfect graphons
holds for graphons, too.
\begin{defn}
We call a graphon $W:\Omega\times\Omega\rightarrow[0,1]$ \emph{inheritance-perfect}
if for every set $A\subset\Omega$ of positive measure we have $\chi(W[A])=\omega(W[A])$.
Further, we call $W$ \emph{properly inheritance-perfect} if it is
induced-perfect and $\chi(W)<\infty$.
\end{defn}

We can characterize subgraph-perfect graphons in the same fashion
as perfect graphs by the density of induced odd cycles and its complements.
\begin{prop}
A graphon $W$ is subgraph-perfect if and only if for every odd integer
$\ell\geq5$ we have $t_{\mathrm{ind}}(C_{\ell},W)=0$ and $t_{\mathrm{ind}}(\overline{C_{\ell}},W)=0$.
\end{prop}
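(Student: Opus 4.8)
The plan is to reduce the statement to the Strong Perfect Graph Theorem via a simple monotonicity property of induced densities, namely that passing to an induced subgraph can only increase the induced density.

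First I would record this monotonicity fact: for any graphon $W$, any finite graph $H$, and any induced subgraph $F=H[S]$ with $S\subseteq V(H)$, one has $t_{\mathrm{ind}}(F,W)\ge t_{\mathrm{ind}}(H,W)$. To prove it, split the integration domain as $\Omega^{V(H)}=\Omega^{S}\times\Omega^{V(H)\setminus S}$ and split the pairs of $\binom{V(H)}{2}$ into those with both endpoints in $S$ and those with at least one endpoint outside $S$. The integrand defining $t_{\mathrm{ind}}(H,W)$ then factors as $g(\mathbf{x}_{S})\cdot k(\mathbf{x}_{S},\mathbf{x}_{V(H)\setminus S})$, where $g(\mathbf{x}_{S})=\prod_{uv\in E(F)}W(x_{u},x_{v})\prod_{uv\in{S \choose 2}\setminus E(F)}(1-W(x_{u},x_{v}))$ is exactly the integrand defining $t_{\mathrm{ind}}(F,W)$ (using $E(F)=\{uv\in E(H):u,v\in S\}$) and $k$ collects the remaining factors, each lying in $[0,1]$, so $k\in[0,1]$ pointwise. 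Since $\Omega$ is a probability space, $\int_{\Omega^{V(H)\setminus S}}k\le1$ for every $\mathbf{x}_{S}$, and as $g\ge0$ Fubini gives $t_{\mathrm{ind}}(H,W)=\int_{\Omega^{S}}g(\mathbf{x}_{S})\bigl(\int_{\Omega^{V(H)\setminus S}}k\bigr)\le\int_{\Omega^{S}}g=t_{\mathrm{ind}}(F,W)$.

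For the forward implication, suppose $W$ is subgraph-perfect and fix an odd $\ell\ge5$. We have $\omega(C_{\ell})=2<3=\chi(C_{\ell})$, and $\omega(\overline{C_{\ell}})=\alpha(C_{\ell})=\tfrac{\ell-1}{2}$ while $\chi(\overline{C_{\ell}})=\tfrac{\ell+1}{2}$ (a proper coloring of $\overline{C_{\ell}}$ is a partition of $V(C_{\ell})$ into cliques of $C_{\ell}$, each of size at most $2$, so at least $\lceil\ell/2\rceil$ parts are needed, and $\tfrac{\ell-1}{2}$ disjoint edges plus one isolated vertex achieve this). In both cases $\chi\neq\omega$, so the definition of subgraph-perfectness forces $t_{\mathrm{ind}}(C_{\ell},W)=0$ and $t_{\mathrm{ind}}(\overline{C_{\ell}},W)=0$.

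For the converse, assume $t_{\mathrm{ind}}(C_{\ell},W)=t_{\mathrm{ind}}(\overline{C_{\ell}},W)=0$ for every odd $\ell\ge5$, and let $H$ be an arbitrary finite graph with $t_{\mathrm{ind}}(H,W)>0$. If $H$ were not perfect, then by the Strong Perfect Graph Theorem it would contain an induced subgraph $F$ isomorphic to $C_{\ell}$ or $\overline{C_{\ell}}$ for some odd $\ell\ge5$; the monotonicity fact would then yield $t_{\mathrm{ind}}(F,W)\ge t_{\mathrm{ind}}(H,W)>0$, contradicting the hypothesis. Hence $H$ is perfect, and in particular $\chi(H)=\omega(H)$. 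Since $H$ was arbitrary, $W$ is subgraph-perfect. The only spot requiring any care is getting the factorization in the monotonicity lemma exactly right; everything else is bookkeeping together with the already-cited Strong Perfect Graph Theorem, so I do not anticipate a serious obstacle.
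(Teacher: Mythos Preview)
Your proof is correct and follows essentially the same route as the paper: both directions reduce to the Strong Perfect Graph Theorem together with the fact that an induced subgraph $F$ of $H$ satisfies $t_{\mathrm{ind}}(F,W)\ge t_{\mathrm{ind}}(H,W)$. The only difference is cosmetic: the paper cites this monotonicity as (the $t_{\mathrm{ind}}$-version of) Exercise~7.6 in Lov\'asz's book, whereas you supply the short Fubini argument directly, making your write-up a bit more self-contained.
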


\begin{proof}
For every odd integer $\ell\geq5$, we have $\omega(C_{\ell})<\chi(C_{\ell})$
and $\omega(\overline{C_{\ell}})<\chi(\overline{C_{\ell}})$. Thus,
if $W$ is subgraph-perfect, then $t_{\mathrm{ind}}(C_{\ell},W)=0$
and $t_{\mathrm{ind}}(\overline{C_{\ell}},W)=0$.

Now, assume $W$ is not subgraph-perfect. Then, there is a finite
imperfect graph $H$ with $t_{\mathrm{ind}}(H,W)>0$. By the Strong
perfect graph theorem we have that $t_{\mathrm{ind}}(C_{\ell},H)>0$
or $t_{\mathrm{ind}}(\overline{C_{\ell}},H)>0$ for some odd integer
$\ell\geq5$. By Exercise~7.6\footnote{Exercise~7.6 in \cite{Lovasz2012} is stated for the parameter $t(\cdot,\cdot)$
rather than $t_{\mathrm{ind}}(\cdot,\cdot)$. The statement and the
proof hold \emph{mutatis mutandis}.} of \cite{Lovasz2012} we get that $t_{\mathrm{ind}}(C_{\ell},W)>0$
or $t_{\mathrm{ind}}(\overline{C_{\ell}},W)>0$. This finishes the
proof.
\end{proof}
Recall that the \emph{complement} of a graphon $W$ is defined as
$\overline{W}(x,y)=1-W(x,y)$ for every $x,y\in\Omega$. Naturally,
we obtain the weak version of the last theorem.
\begin{cor}
A graphon is subgraph-perfect if and only if its complement is subgraph-perfect.
\end{cor}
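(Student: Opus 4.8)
The plan is to reduce the statement about graphons to the corresponding statement about finite graphs, where it is the classical fact that a finite graph $G$ is perfect if and only if its complement $\overline{G}$ is perfect (one direction of the Perfect Graph Theorem, due to Lov\'asz). We have already characterized subgraph-perfectness of $W$ by the conditions $t_{\mathrm{ind}}(C_\ell,W)=0$ and $t_{\mathrm{ind}}(\overline{C_\ell},W)=0$ for every odd $\ell\ge 5$, so the cleanest route is to show that this characterizing collection of conditions is invariant under taking complements of the graphon.

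First I would record the elementary identity $t_{\mathrm{ind}}(H,\overline{W}) = t_{\mathrm{ind}}(\overline{H},W)$ for every finite graph $H$: indeed, in the defining integral for $t_{\mathrm{ind}}(H,\overline{W})$ each factor $\overline{W}(x_u,x_v)=1-W(x_u,x_v)$ appears exactly for the edges $uv\in E(H)$, and each factor $1-\overline{W}(x_u,x_v)=W(x_u,x_v)$ appears exactly for the non-edges, which is precisely the integrand defining $t_{\mathrm{ind}}(\overline{H},W)$. Then I would apply this with $H=C_\ell$ (so $\overline{H}=\overline{C_\ell}$) and with $H=\overline{C_\ell}$ (so $\overline{H}=C_\ell$), obtaining
\[
t_{\mathrm{ind}}(C_\ell,\overline{W}) = t_{\mathrm{ind}}(\overline{C_\ell},W)
\qquad\text{and}\qquad
t_{\mathrm{ind}}(\overline{C_\ell},\overline{W}) = t_{\mathrm{ind}}(C_\ell,W).
\]
Hence the pair of conditions ``$t_{\mathrm{ind}}(C_\ell,W)=0$ and $t_{\mathrm{ind}}(\overline{C_\ell},W)=0$ for all odd $\ell\ge5$'' holds for $W$ if and only if it holds for $\overline{W}$.

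Finally, invoking the Proposition just proved, the left-hand condition is equivalent to $W$ being subgraph-perfect and the right-hand condition is equivalent to $\overline{W}$ being subgraph-perfect, so the two notions coincide, which is exactly the claim. I do not anticipate a genuine obstacle here; the only thing to be slightly careful about is that $\overline{C_\ell}$ for odd $\ell\ge5$ is again an imperfect graph with $\overline{\overline{C_\ell}}=C_\ell$, so the two families of forbidden densities are genuinely swapped rather than altered, and that $\overline{W}$ is indeed a graphon on the same probability space (its values lie in $[0,1]$), so all the earlier results apply to it verbatim.
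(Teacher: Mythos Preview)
Your argument is correct and is precisely the route the paper takes: the corollary is stated as an immediate consequence of the preceding Proposition, and the only computation needed is the elementary identity $t_{\mathrm{ind}}(H,\overline{W})=t_{\mathrm{ind}}(\overline{H},W)$, which swaps the two families of forbidden induced densities.
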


We now relate subgraph-perfectness and induced-perfectness.
\begin{thm}
\label{thm:graphonIsPerfect}Let $W:\Omega\times\Omega\rightarrow[0,1]$
be a properly subgraph-perfect graphon. Then $W$ is inheritance-perfect.
\end{thm}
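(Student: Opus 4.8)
The plan is to fix an arbitrary set $A\subset\Omega$ of positive measure and prove the inequality $\chi(W[A])\le\omega(W[A])$; since the reverse inequality $\chi(W[A])\ge\omega(W[A])$ holds for every graphon (as noted above, and in any case an immediate consequence of Proposition~\ref{prop:k-partite}), this yields $\chi(W[A])=\omega(W[A])$, i.e.\ inheritance-perfectness of $W$.

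The argument rests on two elementary monotonicity observations for passing between $W$ and its subgraphon $W[A]$. First, restricting a proper colouring of $W$ to $A$ gives a proper colouring of $W[A]$, so $\chi(W[A])\le\chi(W)<\infty$; set $k:=\chi(W[A])\in\mathbb{N}$. Second, since $W[A]$ carries the rescaled measure $\nu_A=\nu/\nu(A)$ supported inside $\Omega$, for every finite graph $F$ one has $t_{\mathrm{ind}}(F,W)\ge\nu(A)^{|V(F)|}\,t_{\mathrm{ind}}(F,W[A])$ (and the analogous bound for $t$); in particular a density that is positive in $W[A]$ is positive in $W$.

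Now I would argue as follows. Applying Proposition~\ref{prop:k-partite} to $W[A]$ and using $\chi(W[A])=k$, we obtain a finite graph $G$ with $\chi(G)\ge k$ and $t(G,W[A])>0$. Writing $1=W+(1-W)$ for each non-edge of $G$ gives the standard expansion $t(G,W[A])=\sum_F t_{\mathrm{ind}}(F,W[A])$, the sum running over all graphs $F$ on $V(G)$ with $E(F)\supseteq E(G)$; hence $t_{\mathrm{ind}}(F,W[A])>0$ for some such spanning supergraph $F$, and then $t_{\mathrm{ind}}(F,W)>0$ by the second observation above. Since $W$ is subgraph-perfect, $\chi(F)=\omega(F)$, and since $G\subseteq F$ we have $\chi(F)\ge\chi(G)\ge k$; therefore $\omega(F)\ge k$, so $F$ contains a clique on some vertex set $S$ with $|S|=k$. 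Finally, at every point the integrand of $t_{\mathrm{ind}}(F,W[A])$ is bounded above by $\prod_{uv\in\binom{S}{2}}W(x_u,x_v)$ (every pair inside $S$ is an edge of $F$, and all remaining factors lie in $[0,1]$); integrating out the coordinates outside $S$ gives $t(K_k,W[A])\ge t_{\mathrm{ind}}(F,W[A])>0$, so $\omega(W[A])\ge k=\chi(W[A])$, as needed.

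The one step that is not pure bookkeeping is the transition from ``$G$ occurs in $W[A]$ merely as a subgraph (positive $t$)'' to ``an imperfection-controlled graph occurs in $W$ as an induced subgraph (positive $t_{\mathrm{ind}}$)'': this is forced on us because subgraph-perfectness is phrased via $t_{\mathrm{ind}}$, and it is exactly what the spanning-supergraph expansion combined with the monotonicity bound accomplishes. The hypothesis that $W$ is \emph{properly} subgraph-perfect enters only to guarantee $k=\chi(W[A])<\infty$, so that Proposition~\ref{prop:k-partite} produces a finite witness graph $G$; the remaining ingredients (the reduction to a single $A$, the two density comparisons, and the clique extraction) are routine.
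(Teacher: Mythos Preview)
Your proof is correct and takes a genuinely different route from the paper's. The paper argues via random sampling: for the sequence $G_n\sim\mathbb{G}(n,W)$ one has $\chi(W)=\lim_n\chi(G_n)$ by Theorem~\ref{thm:chromaticLimit}(b) and $\omega(W)=\lim_n\omega(G_n)$ almost surely; since $W$ is subgraph-perfect the sampled graphs $G_n$ are almost surely perfect, so $\chi(G_n)=\omega(G_n)$ and the two limits coincide; the same argument is then rerun on $W[A]$. Your approach instead stays entirely within density calculus: you extract via Proposition~\ref{prop:k-partite} a single finite $G$ with $\chi(G)\ge k$ and $t(G,W[A])>0$, pass to a spanning supergraph $F$ with positive \emph{induced} density via the expansion $t=\sum t_{\mathrm{ind}}$, invoke subgraph-perfectness once to get $\omega(F)=\chi(F)\ge k$, and read off $t(K_k,W[A])>0$ by bounding the integrand. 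This is more elementary---it avoids the random-graph limit machinery and the appeal to Theorem~\ref{thm:chromaticLimit}(b) altogether---and it makes the role of the hypothesis $\chi(W)<\infty$ transparent (it is used only to ensure $k<\infty$ so that Proposition~\ref{prop:k-partite} produces a finite witness). The paper's route, by contrast, treats $\chi$ and $\omega$ symmetrically through their common limit along $\mathbb{G}(n,W)$.
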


\begin{proof}
We shall first prove that $\omega(W)=\chi(W)$. To this end, consider
the sequence in the statement of Theorem~\ref{thm:chromaticLimit}
(b). Let $G_{n}$ be the graphs in that sequence and let $W_{n}$
be their graphon representation. Now, we have

\begin{equation}
\omega(W)=\lim_{n\rightarrow\infty}\omega(G_{n}).\label{eq:lim1}
\end{equation}
Furthermore, by Theorem~\ref{thm:chromaticLimit} we almost surely
have

\begin{equation}
\chi(W)=\lim_{n\rightarrow\infty}\chi(G_{n}).\label{eq:lim2}
\end{equation}

Since $W$ is properly subgraph-perfect, the graphs $G_{n}$ are all
perfect (almost surely). Thus, $\omega(G_{n})=\chi(G_{n})$, and $\omega(W)=\chi(W)$
follows from~(\ref{eq:lim1}) and~(\ref{eq:lim2}).

Now, observe that induced-perfectness follows. Indeed, if $W$ is
subgraph-perfect and we are given a set $A\subset\Omega$ of positive
measure, we can run the above argument for the graphon $W[A]$ (which
is obviously subgraph-perfect) to conclude that $\chi(W[A])=\omega(W[A])$.
\end{proof}
We cannot reverse Theorem~\ref{thm:graphonIsPerfect} directly. For
example, consider $W$ to be a graphon representation of a triangle
$K_{3}$. Now, take $U:=\frac{1}{2}W$, that is, $U$ is a graphon
representation of a triangle, where the edges are represented by value
$\frac{1}{2}$. Obviously, $U$ is inheritance-perfect but we have
$t_{\mathrm{ind}}(C_{5},W)>0$. However, we do not know whether we
can reverse Theorem~\ref{thm:graphonIsPerfect} for $\{0,1\}$-valued
graphons. We pose this as an open problem.
\begin{problem}
\label{prob:reverseperfect}Suppose that $W$ is a $\{0,1\}$-valued
inheritance-perfect graphon. Is $W$ also subgraph-perfect?
\end{problem}

Our last open problem is the ``weak perfect graph theorem'' for
inheritance-perfect graphons. Note that a positive answer to this
problem would be automatically implied by a positive answer to Problem~\ref{prob:reverseperfect}.
\begin{problem}
Suppose that $W$ is a $\{0,1\}$-valued inheritance-perfect graphon.
Is its complement $\overline{W}$ also inheritance-perfect?
\end{problem}

\bibliographystyle{plain}
\bibliography{bibl}

\begin{thebibliography}{10}

\bibitem{Bachoc}
C.~Bachoc, E.~DeCorte, F.M. de~Oliveira~Filho, and F.~Vallentin.
\newblock Spectral bounds for the independence ratio and the chromatic number
  of an operator.
\newblock {\em Israel Journal of Mathematics}, 202(1):227--254, 2014.

\bibitem{Borgs2008c}
C.~Borgs, J.~T. Chayes, L.~Lov{\'a}sz, V.~T. S{\'o}s, and K.~Vesztergombi.
\newblock {Convergent sequences of dense graphs. {I}. {S}ubgraph frequencies,
  metric properties and testing}.
\newblock {\em Adv. Math.}, 219(6):1801--1851, 2008.

\bibitem{MR2233847}
M.~Chudnovsky, N.~Robertson, P.~Seymour, and R.~Thomas.
\newblock The strong perfect graph theorem.
\newblock {\em Ann. of Math. (2)}, 164(1):51--229, 2006.

\bibitem{CKP:SizeConvergence}
O.~Cooley, M.~Kang, and O.~Pikhurko.
\newblock Size convergence of graphons.
\newblock manuscript in preparation.

\bibitem{DHM:CliquesRandom}
M.~Dole\v{z}al, J.~Hladk{\'y}, and A.~M{\'a}th{\'e}.
\newblock Cliques in dense inhomogeneous random graphs.
\newblock {\em Random Structures Algorithms}, 51(2):275--314, 2017.

\bibitem{DoHl:Polytons}
M.~Dole{\v z}al and J.~Hladk{\'y}.
\newblock {Matching polytons}.
\newblock arXiv:1606.06958.

\bibitem{MR936633}
M.~Gr\"otschel, L.~Lov\'asz, and A.~Schrijver.
\newblock {\em Geometric algorithms and combinatorial optimization}, volume~2
  of {\em Algorithms and Combinatorics: Study and Research Texts}.
\newblock Springer-Verlag, Berlin, 1988.

\bibitem{HlHuPi:Komlos}
J.~Hladk{\'y}, P.~Hu, and D.~Piguet.
\newblock Koml\'{o}s's tiling theorem via graphon covers.
\newblock {\em J. Graph Theory}, 90(1):24--45, 2019.

\bibitem{MR1358531}
I.~Leader.
\newblock The fractional chromatic number of infinite graphs.
\newblock {\em J. Graph Theory}, 20(4):411--417, 1995.

\bibitem{Lovasz2012}
L.~Lov{\'a}sz.
\newblock {\em {Large networks and graph limits}}, volume~60 of {\em {American
  Mathematical Society Colloquium Publications}}.
\newblock American Mathematical Society, Providence, RI, 2012.

\bibitem{Lovasz2006}
L.~Lov{\'a}sz and B.~Szegedy.
\newblock {Limits of dense graph sequences}.
\newblock {\em J. Combin. Theory Ser. B}, 96(6):933--957, 2006.

\end{thebibliography}

\end{document}